\newtheorem{theorem}{Theorem}[section]
\newtheorem{lemma}[theorem]{Lemma}
\newtheorem{proposition}[theorem]{Proposition}
\theoremstyle{definition}
\newtheorem{definition}[theorem]{Definition}
\newtheorem*{observation}{Observation}
\theoremstyle{remark}
\newtheorem{remark}[theorem]{Remark}
\numberwithin{equation}{section}
\newcommand{\R}{{\mathbb R}}
\newcommand{\Z}{{\mathbb Z}}
\newcommand{\N}{{\mathbb N}}
\newcommand{\T}{{\mathbb T_{j,\tau}}}
\newcommand{\ra}{{\textnormal{RapDec}}}
\newcommand{\supp}{{\operatorname{supp}}}
\begin{document}

\title[Microlocal Decoupling and Riemannian Distance Set]{Microlocal decoupling inequalities and the distance problem on Riemannian manifolds}
	\author{Alex Iosevich}
	\address{Department of Mathematics, University of Rochester, Rochester NY, 14620}
	\email{alex.iosevich@rochester.edu}

	\author{Bochen Liu}
	\address{Department of Mathematics \& International Center for Mathematics, Southern University of Science and Technology, Shenzhen 518055, China}
	\email{Bochen.Liu1989@gmail.com}

	\author{Yakun Xi}
	\address{School of Mathematical Sciences, Zhejiang University, Hangzhou 310027, PR China}
	\email{yakunxi@zju.edu.cn}

\begin{abstract}
	  We study the generalization of the Falconer distance problem to the Riemannian setting. In particular, we extend the result of Guth--Iosevich--Ou--Wang for the distance set in the plane to general Riemannian surfaces. Key new ingredients include a family of refined microlocal decoupling inequalities, which are related to the work of Beltran--Hickman--Sogge on Wolff-type inequalities, and an analog of Orponen's radial projection lemma which has proved quite useful in recent work on distance sets.
\end{abstract}
\maketitle
	
\section{Introduction}

The Falconer distance problem has been a central and persistently difficult question in harmonic analysis and geometric measure theory since 1986 when it was introduced by Falconer \cite{Falc86}. He conjectured that if the Hausdorff dimension of a compact subset $E \subset {\Bbb R}^d$, $d \ge 2$, is greater than $\frac{d}{2}$, then the Lebesgue measure of the distance set, 
$$ \Delta(E)=\{|x-y|: x,y \in E\}$$ is positive, where $|{}\cdot{}|$ denotes the Euclidean distance. Considering a suitably thickened and scaled integer lattice shows that the exponent $\frac{d}{2}$ would be best possible. 

Falconer \cite{Falc86} proved that the Lebesgue measure of $\Delta(E)$ is positive if $\dim_{{\mathcal H}}(E)>\frac{d+1}{2}$. This exponent was lowered to $\frac{13}{9}$ in two dimension by Bourgain \cite{B94} and to $\frac{4}{3}$ by Wolff \cite{W99}. Erdogan \cite{Erd05} established the threshold $\frac{d}{2}+\frac{1}{3}$ and the subject remained stuck for a while until a flurry of activity in the last  couple of years, culminating in the exponent $\frac{5}{4}$ by Guth, Iosevich, Ou and Wang \cite{GIOW19} in the plane, the exponent $1.8$ by Du, Guth, Ou, Wang, Wilson and Zhang \cite{DGOWWZ18} in $\R^3$, and the exponent $\frac{d^2}{2d-1}$ by Du and Zhang \cite{DZ18} in higher dimensions.  

It is interesting to note that the $\frac{5}{4}$ result in two dimensions is pinned in the sense that the authors prove that there exists $x \in E$ such that the Lebesgue measure of $\Delta_x(E)=\{|x-y|: y \in E\}$ is positive. The transition to pinned results was made possible, in part, due to a result by the second author \cite{Liu18} who established the $\frac{4}{3}$ pinned threshold in two dimensions. 

It is also interesting to formulate an analog of this problem on manifolds. Let $M$ be a $d$-dimensional compact Riemannian manifold without a boundary. Let $g$ be the associated Riemannian metric, $d_g$ the induced distance function, and for $E \subset M$ define 
$$\Delta_{g}(E)=\{d_g(x,y): x,y \in E \}.$$ Once again, we ask how large $dim_{{\mathcal H}}(E)$ needs to be to ensure that the Lebesgue measure of $\Delta_{g}(E)$ is positive. The Peres--Schlag machinery \cite{PS00} implies that if $dim_{{\mathcal H}}(E)>\frac{d+1}{2}$, then there exists $x \in E$ such that the Lebesgue measure of the pinned distance set $$\Delta_{g,x}(E)=\{d_g(x,y): y \in E \}$$ is positive. In fact Peres--Schlag considered very general maps, called generalized projections, that are not even necessarily metrics. Later this problem was studied by Eswarathasan, Iosevich and Taylor \cite{EIT11} (non-pinned version), Iosevich, Taylor and Uriarte-Tuero \cite{ITU16} (pinned version), via Fourier integral operators. Although for technical reasons their general setups look different, on Riemannian metrics their bounds are exactly the same as Peres--Schlag. Recently, by applying local smoothing estimates of Fourier integral operators, Iosevich and Liu \cite{IL19} improve the dimensional exponent on pins (the exceptional sets), while no dimensional exponent better than $\frac{d+1}{2}$ was obtained if pins are required to lie in the given set itself. The main result of this paper is the following. 

\begin{theorem} \label{maindistance} Let $(M, g)$ be a two-dimensional Riemannian manifold without a boundary, equipped with the Riemannian metric $g$. Let $E \subset M$ be of Hausdorff dimension $>\frac{5}{4}$. Then there exists $x \in E$ such that the Lebesgue measure of $\Delta_{g,x}(E)$ is positive. \end{theorem}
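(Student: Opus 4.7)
The plan is to follow the blueprint of Guth--Iosevich--Ou--Wang, replacing each Euclidean ingredient by its Riemannian microlocal analogue. Fix a Frostman measure $\mu$ supported on $E$ with exponent $s \in (5/4, \dim_{\mathcal H} E)$, and for $x \in \supp\mu$ let $\nu_x$ denote the pushforward of $\mu$ under $y \mapsto d_g(x,y)$. It suffices to produce a single $x$ with $\nu_x \in L^2(\R)$, since then $\Delta_{g,x}(E) \supseteq \supp\nu_x$ has positive Lebesgue measure. By Plancherel and a dyadic frequency decomposition, this reduces to a uniform-in-$\lambda$ bound for
$$ \int \left| \int e^{i\lambda\, d_g(x,y)}\, a(x,y)\, d\mu(y) \right|^2 d\mu(x), $$
where $a$ is a smooth cutoff supported in a coordinate chart. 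By the Hadamard parametrix, the oscillatory integral operator with phase $d_g$ agrees, modulo smoothing errors, with a frequency-$\lambda$ piece of the half-wave propagator $e^{it\sqrt{-\Delta_g}}$: an FIO whose canonical relation encodes the geodesic flow.

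The heart of the argument is a refined microlocal decoupling inequality for this variable-coefficient wave operator. Decomposing the frequency-$\lambda$ piece into wave packets --- essentially $1 \times \lambda^{-1/2}$ tubes along geodesics, organized into $\sim\lambda^{1/2}$ angular sectors --- I would establish a Wolff-type bound, the Riemannian analogue of the Beltran--Hickman--Sogge framework for the variable-coefficient light cone associated with $\sqrt{-\Delta_g}$. In geodesic normal coordinates the principal symbol $|\xi|_g$ is a small perturbation of the Euclidean cone, so the natural strategy is a quantitative induction on scales in the spirit of Bourgain--Demeter, carefully tracking the $x$-dependence so that the constants remain uniform across the manifold.

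To exploit the full strength of the hypothesis $s > 5/4$, rather than the weaker $s > 4/3$, I would invoke a Riemannian analogue of Orponen's radial projection lemma, phrased in terms of the exponential map at $x$: provided $\mu$ is not too concentrated on any short geodesic arc through $x$, the angular distribution of $\mu$ as seen from $x$ has quantitatively good $L^p$ behaviour. This is the ingredient that converts the $\ell^2$ square-function produced by the refined decoupling into a pointwise bound on $\nu_x$, and it is precisely what unlocks the $5/4$ threshold in GIOW. Since the statement is local and the exponential map is bi-Lipschitz on small balls, transferring the proof via normal coordinates should be largely routine.

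The endgame is a good/bad dichotomy on $\supp\mu$: on the good set --- points $x$ where $\mu$ spreads in angle around $x$ --- the combination of refined decoupling and the radial projection estimate yields the desired frequency-localized $L^2$ bound on $\nu_x$; on the bad set, concentration of $\mu$ along a single geodesic through $x$ is directly incompatible with the Frostman condition at exponent $s > 5/4$ and can be excluded by an elementary argument. Summing dyadically in $\lambda$ then produces a positive-$\mu$-measure set of $x$ with $\nu_x \in L^2(\R)$, completing the proof. The main obstacle, and the principal technical novelty of the paper, is establishing the refined microlocal decoupling with constants uniform in the base point and with weights sharp enough to absorb the unavoidable $\lambda^{\varepsilon}$-losses against the strict inequality $s > 5/4$; every other ingredient is essentially an adaptation of the Euclidean GIOW machinery to geodesic normal coordinates.
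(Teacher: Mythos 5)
You correctly identify the main ingredients --- a microlocal, variable-coefficient decoupling in the spirit of Bourgain--Demeter and Beltran--Hickman--Sogge, parabolic rescaling/induction on scales, and a Riemannian analogue of Orponen's radial projection lemma --- but the way you propose to assemble them has a real gap, and it is precisely the gap that the pruning procedure in the paper (following \cite{GIOW19}) is designed to close.

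Your plan is to find a single $x$ with $d^x_*\mu\in L^2$. As the paper explicitly observes (extending Proposition~6.1 of \cite{GIOW19} to the Riemannian setting), for every $1\le\alpha<4/3$ one can construct a Frostman measure $\mu$ of exponent $\alpha$ for which $\|d^x_*\mu\|_{L^2}$ is as large as one likes for \emph{every} $x\in\supp\mu$. So the statement "$\nu_x\in L^2$ for some $x$" is simply false in the regime $5/4<s<4/3$, and no amount of decoupling can prove it. Relatedly, your good/bad dichotomy is a dichotomy on base points $x$, with the bad set of $x$ to be dismissed by an "elementary argument" from the Frostman condition. Neither half of this survives: concentration of $\mu$ along a few geodesics through $x$ is perfectly compatible with the Frostman condition at any exponent below $2$ (this is exactly the rail-track phenomenon), and the object that must be controlled is not a bad set of centers but a bad collection of \emph{wave packets}. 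Concretely, the paper prunes $\mu_1$ itself: a geodesic tube $T$ at scale $R_j$ is declared bad when $\mu_2(T)\ge R_j^{-1/2+100\delta}$, and one sets $\mu_{1,\mathrm{good}}=M_0\mu_1+\sum_{T\ \mathrm{good}}M_T\mu_1$. The Riemannian Orponen lemma is then used --- this is its actual role, and it is not elementary --- to show that for most $x$ the $L^1$ norm of $d^x_*\mu_1-d^x_*\mu_{1,\mathrm{good}}$ is tiny. The refined decoupling is applied only to $\mu_{1,\mathrm{good}}$ to get $\int\|d^x_*\mu_{1,\mathrm{good}}\|_{L^2}^2\,d\mu_2(x)<\infty$, and a short Cauchy--Schwarz argument using that $d^x_*\mu_1$ is a probability measure then yields $|\Delta_{g,x}(E)|>0$ for a suitable $x$. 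In short: you need to decouple the \emph{pruned} measure, not the original one, and Orponen's lemma is what justifies the pruning, not what converts a square function into a pointwise bound.

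One more point of detail worth flagging: the goodness criterion is asymmetric (one prunes $\mu_1$ using $\mu_2$-mass of tubes), so you need two disjoint pieces $E_1,E_2$ at distance $\sim1$ with Frostman measures $\mu_1,\mu_2$; a single $\mu$ will not set up the argument cleanly. Once this structure is in place, the rest of your outline --- wave packet decomposition along geodesics, uniform-in-$\phi$ decoupling constants via induction on scales and parabolic rescaling --- does match the paper.
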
 

It is not difficult to see that for any compact two-dimensional Riemannian manifold $M$ without a boundary and any $\epsilon>0$ there exists $E \subset M$ of Hausdorff dimension $1-\epsilon$ such that the Lebesgue measure of $\Delta_{g}(E)$ is zero. This is accomplished by putting a suitable thickened and scaled arithmetic progression on a sufficiently small piece of a geodesic curve. More precisely, one projects the one-dimensional version of the classical Falconer sharpness example (Theorem 2.4 in \cite{Falc86}) onto a small piece of the geodesic. However, the situation in higher dimensions is much more murky. We remind the reader that, in the plane, sharpness examples can be constructed from either arithmetic progressions or lattice points, while in higher dimensions only lattice points would do and the translation invariance of the Euclidean metric is crucial (Theorem 2.4 in \cite{Falc86}). As a general metric may not be translation invariant, now we only have the sharpness of $\frac{d}{2}$ when $d=2$, from ``arithmetic progressions" along a geodesic. We suspect that a generic $d$-dimensional Riemannian manifold possess a subset $E$ of Hausdorff dimension $\frac{d}{2}-\epsilon$, $\epsilon$ small, such that the Lebesgue measure of $\Delta_{g}(E)$ is zero. We shall endeavor to address this question in a sequel. 

The proof of Theorem \ref{maindistance}, just as the proof of its Euclidean predecessor (Theorem 1.1 in \cite{GIOW19}), is based on decoupling theory, a series of Fourier localized $L^p$ inequalities that underwent rapid development in recent years due to the efforts of Bourgain, Demeter, Guth and others. The application of decoupling theory to the proof of Theorem \ref{maindistance} has a variety of new features and complications stemming from the general setup of Riemannian manifolds. In particular, we shall prove a family of decoupling inequalities (Theorem \ref{decoupling-theorem}), which respect a certain microlocal decomposition that naturally generalizes the one used in the Euclidean decoupling theory of Bourgain and Demeter. Similar decompositions were used in the work of Blair and Sogge \cite{BS15} to study concentration of Laplace eigenfunctions. Our work on these variable coefficient decoupling inequalities is inspired by the work of Beltran--Hickman--Sogge \cite{BHS17}, where the authors proved certain Wolff-type decoupling inequalities for the variable coefficient wave equation and then used them to obtain sharp local smoothing estimates for the associated Fourier integral operators. Our proof of Theorem \ref{decoupling-theorem} uses the idea in \cite{BHS17} that one can exploit the multiplicative nature of the decoupling constant to make use of a gain at small scales.  We believe that, like the inequalities obtained in \cite{BHS17}, the microlocal decoupling inequalities we prove here are interesting in their own right.

This paper is structured as follows.  We motivate and set up our main decoupling inequalities in Section \ref{sectiondecoupling}, with the proof of the key decoupling results carried out in Section \ref{sectionproofkeytheorem}, parabolic rescaling in Section \ref{sectionparabolicrescaling} and the refined decoupling inequality in Section \ref{sectionrefined}. The application of the decoupling technology to distance sets on Riemannian manifolds is set up in Section \ref{sectiondecouplingtodistance} and carried out in Section \ref{L1bad} and \ref{sectionproofmain}. The key analog of Orponen's radial projection lemma used in \cite{GIOW19} is established in Section \ref{Sec-radial-proj}. 

{\bf Notation.} We shall write $A\lesssim B$, if there is an absolute constant $C$ such that $A\le CB$. We shall write $A\approx B$, if $\frac9{10} B\le A\le \frac{10}{9}B$. We shall use $\ra(R)$ to denote a term that is rapidly decaying in $R>1$, that is, for any $N\in\N$, there exists a constant $C_N$ such that $|\ra (R)|\le C_N R^{-N}$. We shall say that a function is essentially supported in a set $E$, if the $L^{\infty}$ norm of the tail outside $E$ is $\ra(R)$ for the underlying parameter $R$. 

{\bf Acknowledgment.} The authors would like to thank the anonymous referee for his/her thorough reading of this manuscript and many constructive comments. Liu was partially supported by the grant CUHK24300915 from the Hong Kong Research Grant Council, and a direct grant of research (4053341) from the Chinese University of Hong Kong. Xi was partially supported by the AMS-Simons travel grant and NSF China grant No. 12171424.
\vskip.125in 

\section{A microlocal decoupling inequality}
\label{sectiondecoupling}Like its Euclidean counterpart, the oscillatory integral operator $S_\lambda$, given by
\[S_\lambda f(x)=\int_{M} e^{i\lambda \phi(x,y)}\,a(x,y)\,f(y)\,dy,\quad a\in C_0^\infty(M\times M),\ \phi(x,y)=d_g(x,y),\]
 plays an important role in the study of the Falconer distance problem in the Riemannian setting.
In this section we introduce a decoupling inequality associated to a certain microlocal profile that is closely tied to the above operator. Indeed, we shall consider a more general class of oscillatory integral operator
\[S_\lambda f(x)=\int_{\mathbb R^d} e^{i\lambda \phi(x,y)}\,a(x,y)\,f(y)\,dy,\]
where $a\in C_0^\infty(\R^d\times\R^d)$ and the phase function $\phi(x,y)\in C^\infty(\supp\,a)$ satisfies the {\it Carleson--Sj\"olin condition} (see e.g. Corollary 2.2.3 in \cite{Sog17}). 

\begin{definition}[Carleson--Sj\"olin condition]\label{CS-condition}
	We say the phase function $\phi$ in $S_\lambda$ satisfies the $d$-dimensional Carleson--Sj\"olin condition if
	\begin{enumerate}
		\item for all $(x,y)\in\supp\,a(x,y)$,
		$$\text{rank}\left(\frac{\partial^2\phi}{\partial x\partial y}\right)=d-1;$$
		
		\item For all $x_0\in \supp_x a(x,y)$, $y_0\in \supp_y a(x,y)$, the Gaussian curvature of the $C^\infty$-hypersurfaces
		$$S_{x_0}=\{\nabla_x\phi(x_0,y):a(x_0,y)\neq 0\},\ S_{y_0}=\{\nabla_y\phi(x,y_0):a(x,y_0)\neq 0\}$$
		is positive and $\approx 1$ everywhere.
	\end{enumerate}
\end{definition}
\begin{remark} \label{pure} Denote $x=(x',x_d)$. For simplicity, we will only work with phase functions which are normalized in the sense that \begin{equation}\label{mix partials}\Big|\text{det}\left(\frac{\partial^2\phi}{\partial x'\partial y'}\right)\Big|\approx 1 \text{  on  }  \supp\, a(x,y),\end{equation} and all other entries in the mixed Hessian are small. In addition, we will also assume that \begin{equation}\label{pure partials}\Big|\text{det}\left(\frac{\partial^2\phi}{\partial x'^2}\right)\Big|,\  \Big|\text{det}\left(\frac{\partial^2\phi}{\partial y'^2}\right)\Big| \approx 1 \text{  on  }  \supp\, a(x,y).\end{equation} This can be guaranteed by adding terms purely in $x$ or $y$ to $
	\phi$, which will not change the $L^p$ norm of $f$ or $S_\lambda f$. These conditions are satisfied by the Riemannian distance function $d_g(x,y)$ if the points $x$ and $y$ are separated and positioned on the last coordinate axis.
\end{remark}
Notice that by Fourier inversion
$$S_\lambda f(x)=\frac{1}{(2\pi)^d}\int_{\mathbb R^d}\left(\iint_{\mathbb R^d\times\R^d} e^{i(\lambda \phi(z,y)-(z-x)\cdot\xi)}a(z,y)\,dz\,d\xi\right)f(y)\,dy.$$
Fix $y$ and consider the kernel. By integration by parts in the $z$ variable, one can see that the kernel has rapid decay in $\lambda$ if $|\lambda\nabla_z\phi(z,y)-\xi|>\lambda^\epsilon$. This implies, by losing a negligible error, it is enough to consider pairs $(z, \xi)$ such that $\xi$ lies in the $\lambda^{\epsilon}$-neighborhood of the rescaled hypersurface $\lambda S_z$. With this operator in mind, we will prove a

 general decoupling inequality associated to such a microlocal profile.

For convenience,  we set $\phi^\lambda(x,y):=\lambda\phi(x/\lambda, y/\lambda)$, $a^\lambda(x,y)=a(x/\lambda,y/\lambda)$, $S^\lambda_x=S_{x/\lambda}$ and work on the rescaled ball $B_\lambda$. Here, and throughout, $B_r^k(x)$ denotes the ball in $\R^k$ centered at $x$ of radius $r$. For convenience we let $B_r(x)=B^d_r(x)$. We also write $B_r$ if its center is not of particular interest.

Suppose $1\leq R\leq \lambda$. Denote
\[N_{\phi^\lambda,R}=\{(x,\xi): \text{dist}(\xi,S^\lambda_x)\le R^{-1}\}.\]
We choose a cutoff function $\psi_{\phi^\lambda,R}(x, \xi)\in C_0^\infty(T^*\mathbb R^d)$, which equals 1 on $N_{\phi^\lambda,R}$, and equals zero outside  $N_{\phi^\lambda,R/2}$.

For each $(x,\xi)\in N_{\phi^\lambda, R}$, we consider the curve
\[C_{(x,\xi)}=\{y\in B_\lambda:\nabla_x\phi^\lambda(x,y)=\xi\}.\]
We may parametrize this curve by $y(s)$ such that $|\partial_s y|=1$. Then $\nabla_x\phi^\lambda(x, y(s))=\xi$ and by differentiating both sides in $s$ we have
$$\frac{\partial^2\phi}{\partial x\partial y}\cdot\partial_s y(s)=0.$$
Since \eqref{mix partials} holds and all other entries in the mixed Hessian are assumed to be small, it follows that $|\partial_s y'(s)|$ is small. This implies $|\partial_s y_d(s)|\approx 1$, as $|\partial_s y|=1$. Therefore $C_{(x,\xi)}$ intersects the hyperplane $x_d=0$ at some \begin{equation}\label{intersect-x-axis}(u(x,\xi),0)\in\mathbb R^{d}.\end{equation}
Let
\begin{equation}\label{def-theta}\theta(x, \xi) = -\frac{\nabla_y \phi^\lambda(x,(u(x,\xi),0))}{|\nabla_y \phi^\lambda(x,(u(x,\xi),0))|}\in S^{d-1}.\end{equation} For example, in the Riemannian case where $\phi=d_g$, $C_{(x,\xi)}$ is a geodesic that has tangent vector $\xi$ at the point $x$, $(u(x,\xi),0)$ is the point where this geodesic intersects $x_d=0$, and $\theta(x, \xi)$ is a unit tangent vector of $C_{(x,\xi)}$ at $(u(x,\xi),0)$. Notice $u(x,\xi)$ is unique as long as we work within the injectivity radius of the manifold. More generally the local existence and uniqueness of the two functions $u(x,\xi)$ and $\theta(x,\xi)$ are guaranteed by \eqref{mix partials}, \eqref{pure partials} and the Carleson--Sj\"olin condition.

Now we cover $S^{d-1}$ using $R^{-1/2}$-caps $\tau$ and further decompose $N_{\phi^\lambda,R}$. Denote
\begin{equation}\label{partition}N^\tau_{\phi^\lambda,R}=\left\{(x,\xi)\in N_{\phi^\lambda,R}:\theta(x, \xi)\in \tau\right\}\end{equation}
and $\psi^\tau_{\phi^\lambda,R}(x,\xi)$ as a smooth partition of unity associated to this decomposition so that $\psi_{\phi^\lambda,R}(x,\xi)=\sum_\tau\psi^\tau_{\phi^\lambda,R}(x,\xi).$

We now define a function with microlocal support $N_{\phi^\lambda,R}$ and state the decoupling inequality under the decomposition $N^\tau_{\phi^\lambda,R}$.

\begin{definition}
	We say a smooth function $F$ has microlocal support in $N_{\phi^\lambda,R}$ if 
	$$F(x) = \iint_{\mathbb R^d\times\mathbb R^d} e^{-2\pi i(z-x)\cdot\xi}\, \psi_{\phi^\lambda,R}(z, \xi)\, F(z)\,dz\,d\xi + \ra(R)||F||_1.$$
\end{definition}

Denote
$$F_\tau(x) = \iint_{\mathbb R^d\times\mathbb R^d} e^{-2\pi i(z-x)\cdot\xi}\, \psi^\tau_{\phi^\lambda,R}(z, \xi)\, F(z)\,dz\,d\xi. $$

Throughout this paper, $\omega_{B_R}$ is a weight with Fourier support in $B_{1/R}(0)$ and satisfy
\begin{equation}\label{weight}\chi_{B_R}(x)\lesssim \omega_{B_R}(x) \lesssim \left(1+\frac{|x-c(B_R)|}{R}\right)^{-10d},\end{equation}
where $\chi_{B_R}$ is the indicator function of $B_R$, and $c(B_R)$ denotes the center of $B_R$.

\begin{theorem} [The main decoupling inequality] \label{decoupling-theorem} 
	Let $F$ be a function with microlocal support in $N_{\phi^\lambda,R}$. For any $\epsilon>0$, there exists a constant $C_\epsilon>0$ such that for any ball $B_R$ of radius $R$ contained in $B(0,\lambda)$, we have
	\begin{equation}\label{dec}\|F\|_{L^p(B_R)}\le C_\epsilon R^{\epsilon}\left(\sum_{\tau: R^{-1/2}-caps} \|F_\tau\|^2_{L^p(\omega_{B_R})}\right)^\frac12,\end{equation}
	for all $1\le R\le \lambda$, and $2\le p\le\frac{2(d+1)}{d-1}.$
\end{theorem}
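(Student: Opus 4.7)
The plan is to prove Theorem \ref{decoupling-theorem} by induction on the scale $R$, following the Bourgain-Demeter template adapted to variable coefficients in the spirit of Beltran-Hickman-Sogge \cite{BHS17}. Let $D_p(R)$ denote the smallest constant such that (\ref{dec}) holds uniformly over all Carleson-Sj\"olin phases $\phi$, all balls $B_R \subset B_\lambda$, and all $F$ with microlocal support in $N_{\phi^\lambda, R}$. The base case $R = O(1)$ is trivial by Cauchy-Schwarz since only $O(1)$ caps are in play. The goal is a recursive inequality which, when iterated $O(\log \log R)$ times, yields $D_p(R) \lesssim_\varepsilon R^\varepsilon$ throughout the range $2 \leq p \leq 2(d+1)/(d-1)$.

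For the inductive step I would introduce an intermediate scale $r$, taking $r = R^{1/2}$ as the prototype (which satisfies $r \leq \lambda^{1/2}$ because $R \leq \lambda$), and partition $B_R$ into balls of radius $r$. On each $B_r$ centered at some $x_0$, Taylor expansion of $\phi^\lambda$ in the $x$ variable yields $|\nabla_x \phi^\lambda(x,y) - \nabla_x \phi^\lambda(x_0,y)| \lesssim r/\lambda \leq r^{-1}$, so the family $\{S^\lambda_x\}_{x \in B_r}$ lies inside the $r^{-1}$-neighborhood of the single Carleson-Sj\"olin hypersurface $S^\lambda_{x_0}$. A non-stationary-phase argument then shows that $F|_{B_r}$ has Euclidean Fourier transform essentially supported in that frozen neighborhood, up to rapidly decreasing tails. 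I would group the $R^{-1/2}$-caps $\tau$ into $r^{-1/2}$-super-caps $\sigma$, verify that on $B_r$ the variable partition $\psi^\tau_{\phi^\lambda,R}$ is well-approximated by the frozen analogue for $S^\lambda_{x_0}$, and then apply the flat Bourgain-Demeter $\ell^2$-decoupling theorem for the positively curved hypersurface $S^\lambda_{x_0}$ on $B_r$, decoupling $F|_{B_r}$ into the $F_\sigma$ with constant $\lesssim_\varepsilon r^\varepsilon$.

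Within each super-cap $\sigma$, parabolic rescaling (the subject of Section \ref{sectionparabolicrescaling}) then reduces the further decoupling of $F_\sigma$ into its constituent $R^{-1/2}$-caps $F_\tau$ to a microlocal decoupling problem at the smaller scale $R/r = R^{1/2}$, for a rescaled phase $\tilde\phi$ still satisfying the Carleson-Sj\"olin condition and the normalizations in Remark \ref{pure}. The induction hypothesis at scale $R^{1/2}$ then supplies a factor of $D_p(R^{1/2})$. Integrating in $L^p(\omega_{B_R})$ over the $r$-ball tiling of $B_R$, and using that the weights restrict nicely to each $B_r$, one obtains a recursion of the form $D_p(R) \leq C_\varepsilon\, r^\varepsilon\, D_p(R/r)$, where the exploitation of the multiplicative gain at small scales is precisely the idea borrowed from \cite{BHS17}. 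Iterating this recursion closes the induction.

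The main obstacle will be the passage from the variable-coefficient microlocal partition to the frozen one. The pieces $F_\tau$ are not honest Euclidean Fourier-localized pieces, since they are defined via the $x$-dependent direction $\theta(x,\xi)$; one must argue that localizing to $B_r$ makes $\theta$ effectively $x$-independent, commute the cutoffs $\psi^\tau_{\phi^\lambda,R}$ past the spatial cutoff on $B_r$, and absorb the resulting commutator errors as rapidly decreasing tails via non-stationary phase. Making this rigorous while preserving the $\ell^2$-square-function structure on the right of (\ref{dec}), and tracking constants tightly enough that the iteration does not blow up, is the technical heart of the argument.
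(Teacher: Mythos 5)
Your overall strategy is the same as the paper's: freeze the variable phase on a small ball, apply flat Bourgain-Demeter, and combine parabolic rescaling with induction. But the parameterization you choose differs from the paper's in a way that creates two genuine gaps.

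First, you take the intermediate scale $r = R^{1/2}$, so the parabolic rescaling parameter is $\rho = r^{1/2} = R^{1/4}$, and you iterate the recursion $D_p(R)\le C_\varepsilon\, r^\varepsilon\, D_p(R/r)$ roughly $\log\log R$ times. The paper instead takes a \emph{tiny} scale $K=\lambda^{\varepsilon/2d}$ (so $\rho = K^{1/2}=\lambda^{\varepsilon/4d}$) and closes the induction in a single step, by strong induction on $\lambda$: the small-scale decoupling (Lemma \ref{small-scale}) costs only $K^{\varepsilon/2}$, the parabolic rescaling (Lemma \ref{para}) costs $(R/K)^\varepsilon = R^\varepsilon K^{-\varepsilon}$, and the net gain $K^{-\varepsilon/2}$ absorbs all the absolute constants for $K$ large. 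This is not a cosmetic difference. Lemma \ref{para} is stated with the hypothesis $1\le\rho\le\lambda^{\varepsilon/2d}$, which your choice $\rho=R^{1/4}$ violates except when $R$ is a very small power of $\lambda$. Keeping $\rho$ this small is what guarantees that the rescaled phase $\tilde\phi_\rho$ — obtained by restricting to a curved tube, straightening, and parabolically rescaling — remains in a compact class of Carleson-Sj\"olin phases with \emph{uniform} constants, so that $D_\varepsilon(\cdot,\cdot)$ is a single quantity across the iteration. If you rescale by $\rho=R^{1/4}$ and iterate $\log\log R$ times, you must verify that the constants in both the frozen Bourgain-Demeter step and the parabolic rescaling step do not degrade under repeated aggressive rescaling; nothing in your sketch addresses this, and it is exactly the technical issue the BHS-style one-step induction is designed to sidestep.

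Second, the line ``integrating in $L^p(\omega_{B_R})$ over the $r$-ball tiling of $B_R$'' hides a real obstruction. After applying Bourgain-Demeter on each $B_r$, you have $\|F\|_{L^p(\omega_{B_r})}\lesssim_\varepsilon r^\varepsilon\bigl(\sum_\sigma\|F_\sigma\|^2_{L^p(\omega_{B_r})}\bigr)^{1/2}$; raising to the $p$-th power and summing over the tiling produces an $\ell^p$ sum over balls wrapped around an $\ell^2$ sum over caps, and for $p>2$ you cannot simply interchange them. The paper handles this with the dyadic pigeonholing argument of Lemma \ref{small-scale}: restrict to caps $\sigma$ whose $\|F_\sigma\|_{L^p(\omega_{B_R})}$ lies in a single dyadic range, at which point H\"older converts $\ell^p$ to $\ell^2$ at the cost of a $(\log K)$ factor. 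This step is absent from your outline and is needed before the parabolic rescaling can be applied cap-by-cap.

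With these two points patched — either by proving a parabolic rescaling lemma valid for $\rho$ up to $R^{1/4}$ with uniform constants across iterations, or by switching to the tiny scale $K=\lambda^{\varepsilon/2d}$ as the paper does, and by inserting the pigeonholing step — your argument would align with the paper's proof.
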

\begin{remark}By Minkowski inequality and the freedom given by allowing the $\epsilon$ loss, it suffices to prove \eqref{dec} for $1\le R\le \lambda^{1-\epsilon/d}$. We have discussed about the microlocal support of $S_\lambda f$ at the beginning of this section, thus this decoupling inequality applies to $S_\lambda f$. 
 	 
\end{remark}

\section{Proof of Theorem \ref{decoupling-theorem}}
\label{sectionproofkeytheorem} 
\subsection{Small Scale Decoupling}
We first show that at a sufficiently small scale $R\ll\lambda$, \eqref{dec} can be reduced directly to Bourgain--Demeter. We shall then obtain the general case by induction on scales. Let $0<K\le \lambda^{1/2-\delta}$, where $0<\delta<\frac12$ is small but fixed. By the definition of $\omega_{B_K}$ in \eqref{weight}, we have
$$\sum_{\bar{x}\in K\Z}\omega_{B_K}(x-\bar{x})\approx 1$$
uniformly. Also let $B_K(\bar{x})$ denote the ball centered at $\bar{x}$ of radius $K$.

\begin{lemma}\label{reduce-to-BD}
	Let $0<\epsilon\ll 1$, $\epsilon^2<\delta<\frac12$. Let $F$ be a function supported on $B_R$ with microlocal support in $N_{\phi^\lambda,K}$, where $1\le K\le\lambda^{1/2-\delta}$.  Then there is a constant $C_\epsilon>0$ such that
	\begin{equation}\label{localdec}\|F\|_{L^p({B_{K}(\bar{x})})}\le C_\epsilon K^\epsilon\left(\sum_{\sigma: K^{-1/2}-caps} \|F_\sigma\|^2_{L^p(\omega_{{B_{K}}({\bar x})})}\right)^\frac12+\text{\ra}(R)||F||_{L^1(B_R)}\end{equation}
	holds for any $2\le p\le \frac{2(d+1)}{d-1}$uniformly over the class of phase function $\phi$, and the position of $B(\bar x, K)\subset B_R$.
\end{lemma}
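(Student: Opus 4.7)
The strategy is to exploit the scale restriction $K \le \lambda^{1/2-\delta}$ to \emph{freeze} the microlocal surface $S^\lambda_x$ on $B_K(\bar x)$, reducing the inequality to the standard Bourgain--Demeter $\ell^2$-decoupling for a fixed hypersurface of positive Gaussian curvature. The quantitative key is that since $\phi^\lambda(x,y)=\lambda\phi(x/\lambda,y/\lambda)$, for any $x\in B_K(\bar x)$ and any $y$ in the support of $a^\lambda$,
\[
  |\nabla_x\phi^\lambda(x,y)-\nabla_x\phi^\lambda(\bar x,y)|
  \lesssim K/\lambda\le \lambda^{-1/2-\delta},
\]
which is much smaller than $K^{-1}=\lambda^{-1/2+\delta}$. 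Hence the family $\{S^\lambda_x\}_{x\in B_K(\bar x)}$ lies $o(K^{-1})$-close to the fixed surface $S^\lambda_{\bar x}$, so the microlocal region $N_{\phi^\lambda,K}$ restricted to $x\in B_K(\bar x)$ sits inside the fixed frequency slab $\{\xi:\mathrm{dist}(\xi,S^\lambda_{\bar x})\lesssim K^{-1}\}$, and each $N^\sigma_{\phi^\lambda,K}$ sits inside the corresponding $K^{-1/2}$-cap piece of this slab.

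Next, I would smoothly cut $F$ off by a bump $\chi$ adapted to $B_K(\bar x)$ and show that $\widehat{\chi F}$ is concentrated, up to a $\text{RapDec}(K)\|F\|_1$ tail, in a $CK^{-1}$-neighborhood of the frozen surface $S^\lambda_{\bar x}$. Concretely, using the microlocal representation of $F$ and repeated integration by parts in the $\xi$ variable, only $z$ with $|z-\bar x|\lesssim K$ contribute non-negligibly; on that region $S^\lambda_z$ coincides with $S^\lambda_{\bar x}$ to within $o(K^{-1})$, so the $\xi$-support of $\psi_{\phi^\lambda,K}(z,\xi)F(z)$ is already in the desired frozen $K^{-1}$-slab, and convolving with $\hat\chi$ (also concentrated at scale $K^{-1}$) only enlarges this by a constant factor. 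For such a function the classical Bourgain--Demeter $\ell^2$-decoupling theorem for smooth compact hypersurfaces whose Gaussian curvature is bounded above and below by absolute constants applies directly, yielding $\ell^2$ decoupling into standard $K^{-1/2}$-caps on $S^\lambda_{\bar x}$ in the full range $2\le p\le\tfrac{2(d+1)}{d-1}$, with constants uniform in the parameters.

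Finally, I would identify the Bourgain--Demeter cap decomposition of $S^\lambda_{\bar x}$ with our $\theta$-based partition $\{N^\sigma_{\phi^\lambda,K}\}$. Under the Carleson--Sj\"olin condition and the normalization of Remark \ref{pure}, the map $y'\mapsto\nabla_x\phi^\lambda(\bar x,(y',0))$ is a local diffeomorphism from the $y'$-variables onto a piece of $S^\lambda_{\bar x}$, and the direction $\theta(\bar x,\xi)$ of \eqref{def-theta} is, by the same rank condition, a diffeomorphic image of this $y'$ parameter. Hence $K^{-1/2}$-caps on the surface correspond, with bounded overlap, to $K^{-1/2}$-caps in the $\theta$ variable, i.e., to our sets $\sigma$. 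The main obstacle is controlling simultaneously (i) the variation of the cutoff $\psi_{\phi^\lambda,K}$ as $x$ ranges over $B_K(\bar x)$ and (ii) the matching of the two cap partitions across the ball; both reduce to the quantitative gain $K/\lambda\le\lambda^{-2\delta}K^{-1}$, which can be absorbed into the $K^\varepsilon$ slack provided $\varepsilon^2<\delta$, exactly as required in the statement.
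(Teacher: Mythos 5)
Your proposal follows essentially the same route as the paper's proof: restrict $F$ to the ball $B_K(\bar x)$ with a smooth cutoff, integrate by parts in $\xi$ to see that (up to a rapidly decaying tail) only $z$ within $O(\lambda^{\varepsilon^2}K)$ of $\bar x$ contribute, use the derivative bound $|\nabla_x\phi^\lambda(z,y)-\nabla_x\phi^\lambda(\bar x,y)|\lesssim|z-\bar x|/\lambda\lesssim K^{-1}$ (which is exactly where $K\le\lambda^{1/2-\delta}$ is used) to freeze the surface at $S^\lambda_{\bar x}$, match the $\theta$-based caps $N^\sigma_{\phi^\lambda,K}$ with genuine $K^{-1/2}$-caps on the frozen surface with finite overlap, and invoke Bourgain--Demeter. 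The only cosmetic difference is that the paper enlarges the $z$-support to radius $\lambda^{\varepsilon^2}K$ (rather than $K$) to make the integration-by-parts error explicitly rapidly decaying, which is absorbed exactly as you indicate.
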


We remark that for the purpose of our application, it is enough for $K$ to be as small as $\lambda^{\epsilon/2d}$, but Lemma \ref{reduce-to-BD} works with $K=\lambda^{1/2-\delta}$, for any $\epsilon^2<\delta<\frac12.$

\begin{proof}
	By definition,
	\begin{multline*}F(x)\,\chi_{B_K(\bar{x})}(x)=\chi_{B_K(\bar{x})}(x)\iint_{\mathbb R^d\times\mathbb R^d} e^{-2\pi i(z-x)\cdot\xi}\, \psi_{\phi^\lambda, K}(z, \xi)\, F(z)\,dz\,d\xi\\+\ra(R)||F||_{L^1(B_R)}\end{multline*}
	and
	$$F_\sigma(x)\,\omega_{B_K(\bar{x})}(x)= \omega_{B_K(\bar{x})}(x)\iint_{\mathbb R^d\times\mathbb R^d} e^{-2\pi i(z-x)\cdot\xi}\, \psi^\sigma_{\phi^\lambda, K}(z, \xi)\, F(z)\,dz\,d\xi.$$
	
	For each fixed $z$, both $\psi_{\phi^\lambda, K}(z, {}\cdot{})$, $\psi^\sigma_{\phi^\lambda, K}(z, {}\cdot{})$ can be written as sums of $\lesssim K^{10d}$ smooth functions supported on balls of radius $\approx K$ and equal $1$ on balls of radius $K/2$.	On each piece, integrating by parts in $\xi$ gives rapid decay in $\lambda$ unless $z$ is in the set
	$$\{z: |z-\bar x|\lesssim \lambda^{\epsilon^2}K\},$$
	which is still rapid decaying in $\lambda$ after adding $\lesssim K^{10d}$ terms together. It follows that
	\begin{multline*}F(x)\,\chi_{B_K(\bar{x})}(x)=\chi_{B_K(\bar{x})}(x)\int_{\mathbb R^d}\int_{B_{\lambda^{-\epsilon^{2}}K^{-1}}(\bar{x})} e^{-2\pi i(z-x)\cdot\xi}\, \psi_{\phi^\lambda, K}(z, \xi)\\ F(z)\,dz\,d\xi+\ra(R)||F||_{L^1(B_R)}\end{multline*}
	and
	\begin{multline*}F_\sigma(x)\,\omega_{B_K(\bar{x})}(x)= \omega_{B_K(\bar{x})}(x)\int_{\mathbb R^d}\int_{B_{\lambda^{-\epsilon^{2}}K^{-1}}(\bar{x})} e^{-2\pi i(z-x)\cdot\xi}\, \psi^\sigma_{\phi^\lambda, K}(z, \xi)\\ F(z)\,dz\,d\xi+\ra(R)||F||_{L^1(B_R)}.\end{multline*}

	\begin{observation}
		There exists an absolute constant $C>0$ such that:
		\begin{enumerate}
			\item $\bigcup_{z\in B_{\lambda^{\epsilon^2}K}(\bar{x})} N_{\phi^\lambda, K}(z, {}\cdot{})$ is contained in a $CK^{-1}$-neighborhood of $N_{\phi^\lambda, K}(\bar{x},{}\cdot{})$.
			\item for any $K^{-1/2}$-cap $\sigma$, $\bigcup_{z\in B_{\lambda^{\epsilon^2}K}(\bar{x})} N^\sigma_{\phi^\lambda, K}(z, {}\cdot{})$ is contained in a $CK^{-1}$-neighborhood of $N^\sigma_{\phi^\lambda, K}(\bar{x},{}\cdot{})$.
		\end{enumerate}
	\end{observation}
	\begin{figure}\begin{center}
			\includegraphics[width =13cm]{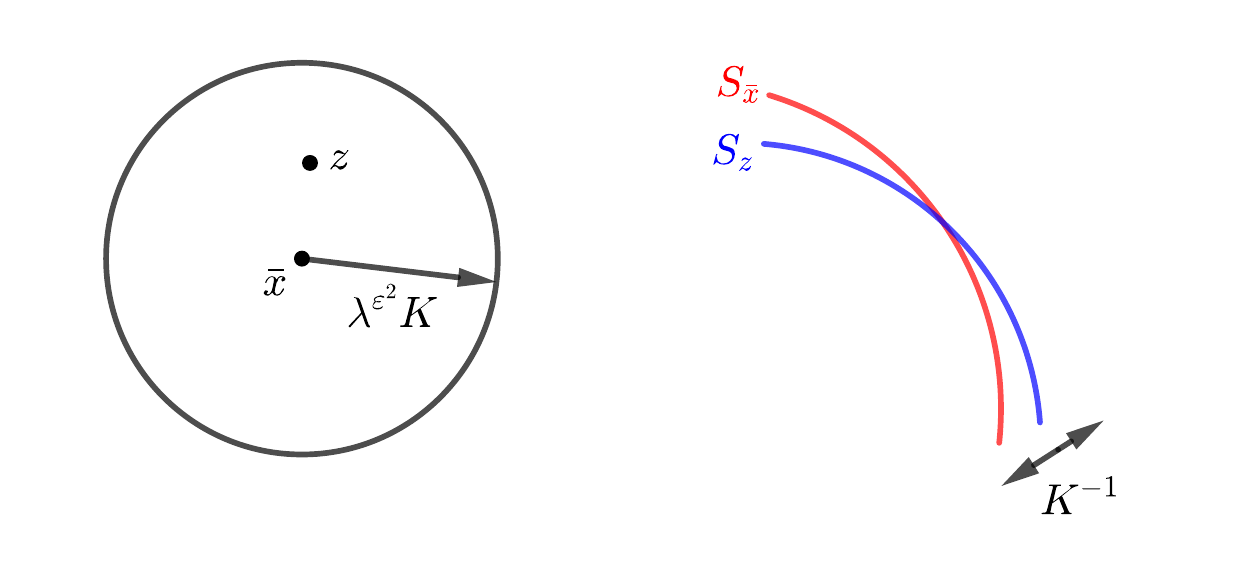}
			\caption{$S_z$ and $S_{\bar x}$.}
			\label{small scale}
		\end{center}
	\end{figure}

	This observation implies that the Fourier support of $F$ lies in a $CK^{-1}$-neighborhood of $S^\lambda_{\bar x}$, and the Fourier supports of $F_\sigma$ are $CK^{-1}$-caps with finite overlap. Then Lemma \ref{reduce-to-BD} follows directly from Bourgain--Demeter's decoupling inequality \cite[Theorem 1.1]{BD15}. Now it remains to prove the observation.
	
	\proof[Proof of Observation.] See Figure \ref{small scale}. The claim (1) follows from the fact that
		$$|\nabla_x\phi^\lambda(z, y)-\nabla_x\phi^\lambda(\bar{x}, y)|\lesssim|z-\bar{x}|/\lambda\lesssim \lambda^{\epsilon^2}K/\lambda\le\lambda^{-1/2-\delta+\epsilon^2}\lesssim K^{-1}.$$
		To see (2), since the function $\theta$ defined in \eqref{def-theta} is differentiable,
		$$|\theta(z/\lambda, \xi)- \theta(\bar{x}/\lambda, \xi)|\lesssim \lambda^{\epsilon^2}K/|\lambda|\lesssim K^{-1}.$$ Hence for each $\xi\in N^\sigma_{\phi^\lambda, K}(z, \cdot)$, $\theta(\bar{x}, \xi)$ lies in a $CK^{-1}$-neighborhood of $\sigma$.

\end{proof}

Now if we cover $B_R$ using balls of radius $K$ and invoke Minkowski inequality to swap the order of the summations, we obtain the following estimate on $B_R$. The term $\ra(R)||F||_{L^1(B_R)}$ is absorbed by the left hand side.
\begin{lemma}[Small Scale Decoupling]\label{small-scale}Given $0<\epsilon\ll 1$, $\epsilon^2<\delta<\frac12$ and $1\le K\le\lambda^{1/2-\delta}$, $0<\epsilon\ll 1$. Let $F$ be a function with microlocal support in $N_{\phi^\lambda,K}$. There exists $C_\epsilon>0$ such that
	\begin{equation}\label{smallscale}\|F\|_{L^p(B_R)}\le C_\epsilon K^\epsilon(\sum_{\sigma: K^{-1/2}-caps} \|F_\sigma\|^2_{L^p(\omega_{B_R})})^\frac12,\end{equation}
	for all $K\le R\le \lambda,$ $2\le p\le \frac{2(d+1)}{d-1}.$
\end{lemma}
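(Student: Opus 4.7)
\medskip

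\noindent\textbf{Proof proposal for Lemma \ref{small-scale}.}

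The plan is to assemble the local estimate \eqref{localdec} from Lemma \ref{reduce-to-BD} over a $K$-separated cover of $B_R$. Fix a lattice $K\Z^d$ and note that the weights $\omega(K^{-1}(x-\bar{x}))$ can be chosen so that their $p$-th powers sum (up to a harmless constant) to the weight $\omega_{B_R}^p$ for $\bar{x}$ ranging over the $\bar{x}\in K\Z^d\cap B_R$. Consequently,
\begin{equation*}
\|F\|^p_{L^p(\omega_{B_R})} \;\lesssim\; \sum_{\bar{x}} \|F\|^p_{L^p(\omega_{B_K(\bar{x})})}.
\end{equation*}
Applying Lemma \ref{reduce-to-BD} to each ball $B_K(\bar{x})$ (which is legitimate since $K\le \lambda^{1/2-\delta}$, so the hypothesis on the microlocal support holds), this is bounded by
\begin{equation*}
C_\varepsilon^p K^{\varepsilon p} \sum_{\bar{x}} \Bigl(\sum_\sigma \|F_\sigma\|^2_{L^p(\omega_{B_K(\bar{x})})}\Bigr)^{p/2}.
\end{equation*}

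\noindent The key step is to interchange the $\ell^2_\sigma$ and $\ell^p_{\bar{x}}$ summations. Setting $a_\sigma(\bar{x}):= \|F_\sigma\|_{L^p(\omega_{B_K(\bar{x})})}$ and using $p\ge 2$, Minkowski's inequality applied to the sequence $\{a_\sigma^2\}_\sigma$ in $\ell^{p/2}_{\bar{x}}$ yields
\begin{equation*}
\Bigl(\sum_{\bar{x}} \Bigl(\sum_\sigma a_\sigma(\bar{x})^2\Bigr)^{p/2}\Bigr)^{2/p} \;\le\; \sum_\sigma \Bigl(\sum_{\bar{x}} a_\sigma(\bar{x})^p\Bigr)^{2/p}.
\end{equation*}
Reassembling the weights on each slot $\sigma$ via $\sum_{\bar{x}} \|F_\sigma\|^p_{L^p(\omega_{B_K(\bar{x})})} \lesssim \|F_\sigma\|^p_{L^p(\omega_{B_R})}$ and taking a $p$-th root gives \eqref{smallscale}.

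\noindent The main technical point I expect is not Minkowski itself (which is routine once $p\ge 2$) but rather verifying that the weights $\omega_{B_K(\bar{x})}$ can indeed be chosen to be comparable to $\omega_{B_R}^p$ after summation, uniformly in $R$. This amounts to selecting $\omega$ as a Schwartz bump whose rescaled translates along $K\Z^d$ enjoy fast decay off of $B_K(\bar{x})$, so that the implicit constants do not depend on the ratio $R/K$. One also must check that the preparatory step in Lemma \ref{reduce-to-BD}, where the physical cutoff essentially localizes $F$ to $B_{\lambda^{\varepsilon^2}K}(\bar{x})$, does not introduce overlaps that destroy the gain; this is absorbed by the $\text{RapDec}(\lambda)$ tails as remarked after Lemma \ref{reduce-to-BD}.
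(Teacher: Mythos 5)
Your proof is correct, and it takes a genuinely different (and in fact more streamlined) route than the paper's.

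The paper first runs a dyadic pigeonholing in the size of $\|F_\sigma\|_{L^p(\omega_{B_R})}$, reducing to a sub-sum $g=\sum_{\sigma:\|F_\sigma\|\sim 2^{j_0}}F_\sigma$ at the cost of a $\log K$ factor. It then covers $B_R$ by $K$-balls, applies Lemma~\ref{reduce-to-BD} on each, and converts the inner $\ell^2_\sigma$ to $\ell^p_\sigma$ by H\"older (picking up $\#^{1/2-1/p}$), so that after summing over balls it can reassemble the outer weight in $\ell^p_\sigma$; it then undoes H\"older at the end, which is only possible because the pigeonholing has made all $\|F_\sigma\|$ comparable. You bypass the pigeonholing entirely: after applying Lemma~\ref{reduce-to-BD} on each $K$-ball, you interchange $\ell^2_\sigma$ and $\ell^p_{\bar x}$ directly via the triangle inequality for $\ell^{p/2}_{\bar x}$ (valid since $p\ge 2$), and then reassemble the weights on each $\sigma$-slot separately. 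This is the standard ``parallel decoupling'' lemma in Bourgain--Demeter style, it avoids the $\log K$ loss, and it makes it transparent that no comparability of the $\|F_\sigma\|$'s is needed. Both arguments close within the $K^\varepsilon$ budget; yours is shorter and arguably cleaner. The only point to tighten in your write-up is the weight bookkeeping: what you actually need is the two-sided comparison $\sum_{\bar x\in K\Z^d}\omega_{B_K(\bar x)}\approx \omega_{B_R}$ on a neighbourhood of $B_R$ (used once in each direction: once to split $\|F\|^p_{L^p(\omega_{B_R})}$ over balls, once to reassemble $\sum_{\bar x}\|F_\sigma\|^p_{L^p(\omega_{B_K(\bar x)})}\lesssim\|F_\sigma\|^p_{L^p(\omega_{B_R})}$); phrased as ``the $p$-th powers of the weights sum to $\omega_{B_R}^p$'' it is slightly off, but the intended (and standard) choice of Schwartz weights makes it routine.
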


\subsection{Iteration.}Now we are ready to prove Theorem \ref{decoupling-theorem}. Denote $D_\epsilon(R,\lambda)$ the smallest constant such that \eqref{dec} holds. The proof goes by induction. The base case of the induction follows from Lemma \ref{reduce-to-BD}. Assume that 
$$D_\epsilon(R',\lambda')\le \bar C_\epsilon$$
for all $0<R'<R/2$ and $R'\le\lambda'^{1-\epsilon/d}$.

To proceed with our iteration, we shall use the following parabolic rescaling lemma which will be proved in the next section. 
\begin{lemma}\label{para}Let $1\le\rho\le R^\frac12\le\lambda^\frac12$ and $\sigma$ be a $\rho^{-1}$-cap. Suppose $h$ has microlocal support contained in $N^\sigma_{\phi^\lambda,R}$. Then for any $\epsilon>0$, given $R\leq\lambda^{1-\epsilon/d}$, $1\leq \rho\leq \lambda^{\epsilon/4d}$, there exists a constant $C_\epsilon'$ which is uniform among the class of $\phi$, such that
	\[\| h\|_{L^p(B_{R})}\le C'_\epsilon D_\epsilon(R/\bar C\rho^2,\lambda/\bar C\rho^2)(R/\rho^2)^{\epsilon} \Big(\sum_{\tau: {R^{-1/2}-caps}}\| h_\tau\|^2_{L^p(\omega_{B_{R}})}\Big)^\frac12.\]
\end{lemma}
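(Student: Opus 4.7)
The plan is to establish the lemma by a variable-coefficient analogue of standard parabolic rescaling in decoupling theory. The aim is to reduce the decoupling problem at scale $(R,\lambda)$ restricted to a single $\rho^{-1}$-cap $\sigma$ to the full decoupling problem at scale $(R/\bar C\rho^2, \lambda/\bar C\rho^2)$, and then invoke the definition of $D_\varepsilon$ at the smaller scale. This is exactly the place where the multiplicative structure of the decoupling constant across scales is exploited in the induction.

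First I would place $\sigma$ in standard position. By composing with a rigid motion (which preserves the Carleson-Sj\"olin class with uniform constants), we may assume $\sigma$ is centered at $e_d \in S^{d-1}$, so that $\theta(x,\xi)\in\sigma$ corresponds to $\theta=(\omega,\sqrt{1-|\omega|^2})$ with $|\omega|\lesssim \rho^{-1}$. Through the defining relations \eqref{intersect-x-axis} and \eqref{def-theta}, this constrains the relevant $y$-variable $(u,0)$ to lie in a $\rho^{-1}$-neighborhood of a base point, and the frequency variable $\xi$ to a $\rho^{-1}$-cap on the hypersurface $S^\lambda_x$.

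Next I would perform the anisotropic rescaling adapted to this configuration: rescale the tangential variables $(x',y')$ by a factor of $\rho^{-1}$ and the normal variable $x_d$ by $\rho^{-2}$, so that the $\rho^{-1}$-cap becomes a cap of unit size, $R^{-1/2}$-subcaps become $(R/\rho^2)^{-1/2}$-caps, and $B_R$ pushes forward to contain $B_{R/\bar C\rho^2}$. Define the rescaled phase $\tilde\phi$ by pulling back $\phi^\lambda$ through this change of variables and absorbing the linear-in-$\tilde x$ remainder terms into the amplitude phase (they do not affect $L^p$-mapping properties, exactly as noted in Remark \ref{pure}). A direct computation of the Hessian of $\tilde\phi^{\lambda/\bar C\rho^2}$ shows it satisfies the normalized Carleson-Sj\"olin conditions of Remark \ref{pure} uniformly in $\rho$, provided $\rho\le \lambda^{\varepsilon/2d}$. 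Under the rescaling the function $h$ transforms into $\tilde h$ whose microlocal support now fills the full neighborhood $N_{\tilde\phi^{\lambda/\bar C\rho^2},\,R/\bar C\rho^2}$, and each $h_\tau$ corresponds to the decoupling piece of $\tilde h$ over a $(R/\rho^2)^{-1/2}$-cap.

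With the reduction complete, I would apply the definition of $D_\varepsilon(R/\bar C\rho^2,\lambda/\bar C\rho^2)$ to $\tilde h$ on the ball $B_{R/\bar C\rho^2}$, then pull back through the change of variables; the Jacobian factors appear identically on both sides of the inequality and cancel, producing the stated bound with the factor $(R/\rho^2)^\varepsilon$. The main obstacle is verifying the uniform normalization of $\tilde\phi$: we must check that its pure and mixed second derivatives stay comparable to $1$ and that the mixed Hessian retains rank $d-1$, with constants independent of $\rho$ and $\lambda$. This is precisely where the hypothesis $\rho\le \lambda^{\varepsilon/2d}$ (equivalently $R\le \lambda^{1-\varepsilon/d}$ given $\rho\le R^{1/2}$) is used: the Taylor remainders of $\phi$ beyond second order, evaluated at scale $\rho$ around the center of $\sigma$, are of size $O(\rho^2/\lambda)$, which is negligible exactly in this regime, so $\tilde\phi$ remains in the same uniform Carleson-Sj\"olin class and the induction constant $D_\varepsilon$ applies without loss.
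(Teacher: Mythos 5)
Your proposal captures the overall purpose of the lemma correctly, but it misses the two ingredients that make the variable-coefficient case genuinely harder than its Euclidean counterpart, and these are precisely what the paper's proof is built around.

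First, you rescale $B_R$ directly. The paper instead decomposes $B_R$ into finitely overlapping curved tubes $T$ of dimensions roughly $(R/\rho)^{d-1}\times R$ adapted to $\sigma$, and uses parallel decoupling across tubes via $\|h\|^p_{L^p(\omega_{B_R})}\sim\sum_T\|h\|^p_{L^p(\omega_T)}$. This step is not cosmetic: the induction hypothesis at scale $(R/\rho^2,\lambda/\rho^2)$ applies on a ball of radius $R/\rho^2$, and only a single tube (not the whole $B_R$) rescales to such a ball. Without the tube decomposition you cannot compare $B_R$ to $B_{R/\rho^2}$.

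Second, and more seriously, the anisotropic rescaling you describe—stretching the tangential coordinates by $\rho$ and the normal one by $\rho^2$ in fixed orthogonal directions—is exactly what the paper argues does not work here. The tube $T$ is curved: its central axis is the flow curve of the phase, and the tangent direction rotates along its length. Equivalently, the center of the $\rho^{-1}$-cap of $S^\lambda_x$ drifts as $x$ moves along $T$. A fixed-direction parabolic rescaling will not send all these drifting caps to a unit cap centered at a common point, so after rescaling you do not obtain a function with microlocal support in the full neighborhood $N_{\tilde\phi^{\lambda/\rho^2},R/\rho^2}$ of a normalized phase, and the definition of $D_\varepsilon$ at the smaller scale cannot be invoked. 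The paper resolves this by first passing to Lagrangian coordinates: it solves $\nabla_{y'}\phi(\alpha(u,t),t,(u,0))=0$ and $\nabla_{y'}\phi((\alpha(v,1),1),(\beta(v,s),s))=0$ by the implicit function theorem, changes variables $x=(\alpha(u,t),t)$, $y=(\beta(v,s),s)$, verifies via $\partial_t$ of the defining relation that the flow direction is always normal to the transformed surface $\tilde S^\lambda_{(u,t)}$ at the cap center, and further modifies the phase by a purely $x$-dependent term so that the cap center is exactly the origin. Only then does the $(\rho,\rho^2)$-rescaling produce the right microlocal profile. Your proposal has no analogue of this straightening, and the Taylor-remainder estimate $O(\rho^2/\lambda)$ you mention controls the deviation of the phase from a quadratic, not the rotation of the cap along the tube, so it does not substitute for it.
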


Applying Lemma \ref{para} to \eqref{smallscale} with $\rho^2=K=\lambda^{\epsilon/2d}$. It follows that
\[\| F\|_{L^p(B_{R})}\le C'_\epsilon D_\epsilon(R/\bar CK,\lambda/\bar CK)(R/K)^{\epsilon} C_\epsilon K^{\epsilon/2} \Big(\sum_{\tau: {R^{-1/2}-caps}}\| F_\tau\|^2_{L^p(\omega_{B_{R}})}\Big)^\frac12.\]

By our induction hypothesis, $D_\epsilon(R/\bar CK,\lambda/\bar CK)\le \bar C_\epsilon$. Therefore
\[\| F\|_{L^p(B_{R})}\le C'_\epsilon \bar C_\epsilon R^{\epsilon} C_\epsilon K^{-\epsilon/2} \Big(\sum_{\tau: {R^{-1/2}-caps}}\| F_\tau\|^2_{L^p(\omega_{B_{R}})}\Big)^\frac12.\]
If we choose $K$ large enough such that $C'_\epsilon C_\epsilon K^{-\epsilon/2}\le 1$, then the induction closes.

\section{Parabolic rescaling and proof of Lemma \ref{para}}
\label{sectionparabolicrescaling} 
Parabolic rescaling is already a standard technique in harmonic analysis. Denote by $N_R$ the $R^{-1}$-neighborhood of a  cap on the hypersurface
$$\{(\omega, \Sigma(\omega)): \omega\in B^{d-1}_{1/\rho}\},$$
parametrized by $\Sigma\in C^\infty(\mathbb R^{d-1})$. Here we assume that $\Sigma(0)=0,$ $\nabla \Sigma(0)=0$, and the curvature of the hypersurface is $\approx 1$. The key idea is that 
$$\{(\rho u, \rho^2 t): (u,t)\in N_R\} $$
is the $(R/\rho^2)^{-1}$-neighborhood of a cap whose curvature is also $\approx 1$.  Although our version of the parabolic rescaling lemma is more complicated, this geometric fact will be still used as a key ingredient in the proof. Using parabolic rescaling, we now prove Lemma \ref{para}.

Let $\theta_0$ be the center of the $(R/\rho)^{-1}$-cap $\sigma\in S^{d-1}$. Without loss of generality, assume that $B_R$ is centered at the origin. Consider a finite overlapping collection of curved tubes with length $R$ and cross-section radius $R\rho^{-1}$ that are positioned with respect to $\sigma$ and cover $B_R$.  More precisely, decompose $[- R ,  R )^{d-1}$ into 
$$I=[j_1 R /\rho,(j_1+1) R /\rho)\times\cdots\times [j_{d-1} R /\rho,(j_{d-1}+1) R /\rho), \ j_i\in[-\rho,\rho]\cap\Z$$ 
and then decompose $B_ R $ into
$$T=\{x\in B_ R : u(x, \theta_0)\in I\},$$
where $u(x, \xi)$ is as defined in \eqref{intersect-x-axis}. It follows that
\[\|h\|^p_{L^p(B_{R})}	\sim \sum_{T}\| h\|^p_{L^p(T)}.\]

Therefore, if we can decouple $h$ on each $T$, we can decouple $h$ over the whole ball by invoking Minkowski inequality. Thus, from now on, we assume that $h$ is localized to such a tube, that is, we shall abuse our notation a bit and consider the function 
\begin{equation}\label{localizing h}
h=h|_T.
\end{equation}

Fixing a tube $T$, after rotation and translation, we may assume the center of $\sigma$ is $\theta_0=(\vec{0}, 1)$, $\vec{0}\in\mathbb R^{d-1}$, and the central curve passes through the origin. Moreover, in view of \eqref{def-theta}, we also assume that a point $x$ on the central curve satisfies $$\nabla_{y'}\phi (x, 0)=\vec 0.$$

\subsection{Straightening the Tube.}
We would like to perform a parabolic rescaling over the tube $T$. However, since our tube $T$ is defined to be the tubular neighborhood of the central curve, the tangent to the central curve of the tube is changing smoothly along the curve. Thus a simple stretching along fixed orthogonal directions will not always provide desired rescaling for the caps along the curve.
To fix this problem, we would like to do a smooth change of variables to Lagrangian coordinates before we do parabolic rescaling. See Figure \ref{parab}. \begin{figure}\begin{center}
		\includegraphics[width =13cm]{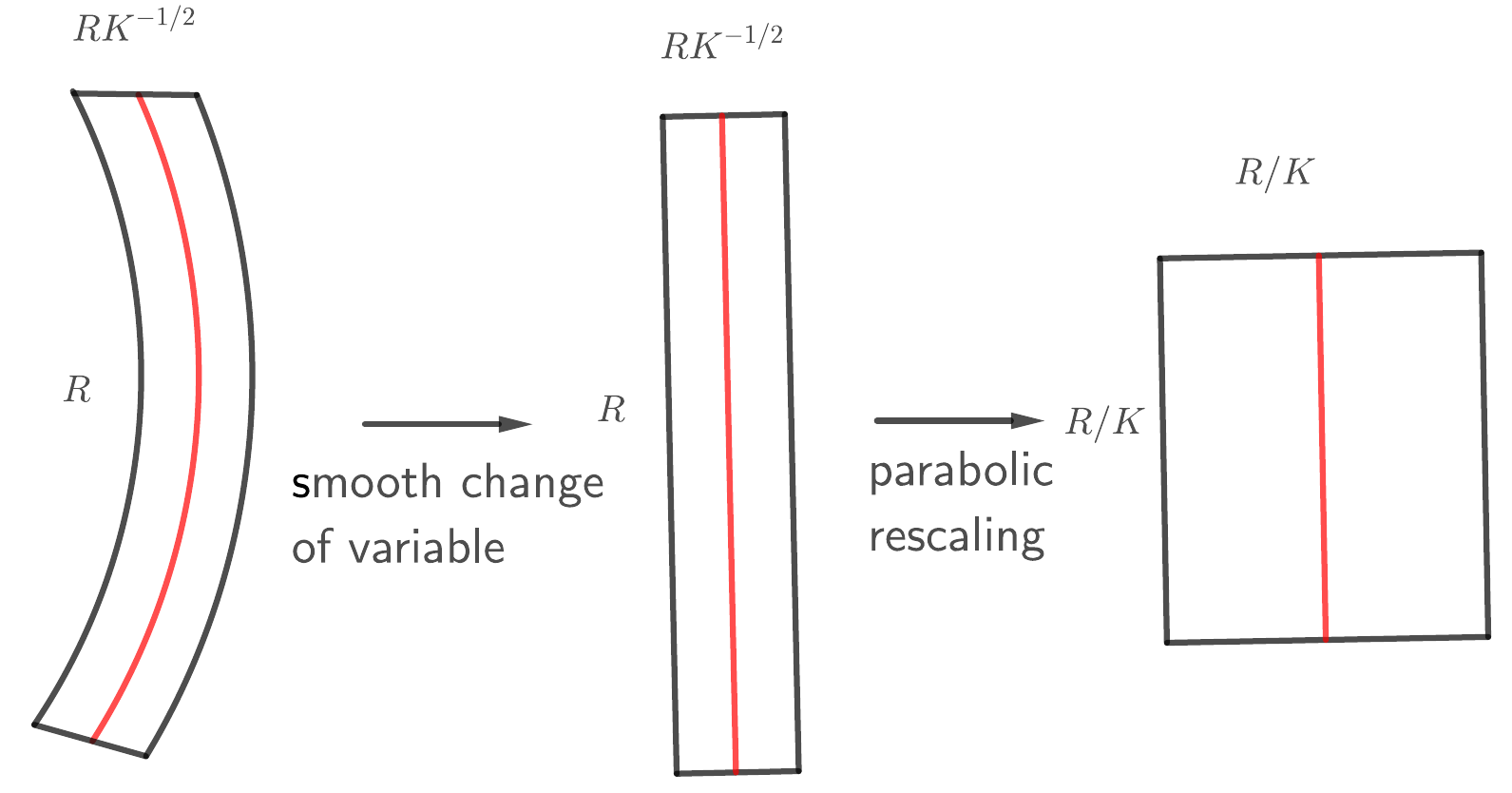}
		\caption{Straightening the tube and parabolic rescaling.}
		\label{parab}
	\end{center}
\end{figure}

To describe this change of variable, first we rescale back to the unit ball, where $T$ get rescaled back to a tube of length $ R\lambda^{-1}$

 cross-section radius $R(\lambda\rho)^{-1}$. Recall that a point $x$ on the central curve must satisfy $$\nabla_{y'}\phi (x, 0)=\vec 0.$$

Denote
$$F(x, u)=\nabla_{y'}\phi (x, (u, 0))\in\R^{d-1}. $$
Since the center of $\sigma$ is $(\vec{0},1)$ and the central curve passes through the origin, we have $F(x, 0)=\vec 0$ for each $x$ in the central curve of $T$. Also by \eqref{mix partials}
$$|\det(\nabla_{x'} F)|=\left|\det\left(\frac{\partial^2\phi}{\partial x'\partial y'}\right)\right|\approx 1,$$
so by the implicit function theorem for any $(x', t)$ in the central curve there exists a neighborhood $U_t\subset\mathbb R^{d-1}$ of $x'$ and a differentiable function $\alpha({}\cdot{}, t)$ on this neighborhood such that for any $u\in U_t$,
\begin{equation}\label{implicit-function-theorem-alpha}\nabla_{y'}\phi ((\alpha(u, t), t), (u, 0))=0.\end{equation}
By continuity there exists a domain $U\subset\R^d$ such that \eqref{implicit-function-theorem-alpha} holds whenever $(u,t)\in U$. Notice $\alpha(u, t)$ is also differentiable in $t$ as $\{(\alpha(u, t), t):t\in(-R/\lambda,R/\lambda)\}$ is a differentiable curve by the regular value theorem.

Then denote
$$G(v,(w,s))=\nabla_{y'}\phi((\alpha(v,0),0), (w,s)). $$
Since the center of $\sigma$ is $(\vec{0},1)$ and the central curve starting from the origin passes through $(\alpha(0,0),0)$, we have $G(0, (0,0))=0$. Also by \eqref{pure partials}
$$|\det(\nabla_{w} G)|=\left|\det\left(\frac{\partial^2\phi}{\partial y'^2}\right)\right|\approx 1.$$
Similar to the existence of $U$, there exists a neighborhood $V\subset\mathbb R^d$ and a differentiable function $\beta$ on this neighborhood such that
\begin{equation}\label{implicit-function-theorem-beta}\nabla_{y'}\phi ((\alpha(v, 0), 0), (\beta(v,s), s))=0.\end{equation}

Notice the size of $U, V$ is independent in $\rho, R, \lambda$, thus we may assume $(x,y)\in U\times V$ for any pair $(x,y)$ under consideration. Also from \eqref{implicit-function-theorem-alpha} and \eqref{implicit-function-theorem-beta}, we have
\begin{equation}\label{initial-point}\beta(u,0)=u.\end{equation}

Change variables from $x=(\alpha(u,t), t)$, $y=(\beta(v,s), s)$ to $(u,t), (v,s)$. We claim that the associated Jacobians are $\approx 1$. In fact, if we take $\nabla_u$ on both sides of \eqref{implicit-function-theorem-alpha}, $\nabla_v$ on both sides of \eqref{implicit-function-theorem-beta}, we have
$$\frac{\partial^2\phi}{\partial x'\partial y'}\cdot \nabla_u \alpha+\frac{\partial^2\phi}{\partial y'^2} =0,\  \frac{\partial^2\phi}{\partial x'\partial y'}\cdot \nabla_u \alpha+\frac{\partial^2\phi}{\partial y'^2}\cdot\nabla_v\beta =0.$$

Since $\left|\det\left(\frac{\partial^2\phi}{\partial y'^2}\right)\right|, \left|\det\left(\frac{\partial^2\phi}{\partial x'\partial y'}\right)\right|\approx 1$, we have \begin{equation}\label{alpha bound}|\det(\nabla_u \alpha)|\approx 1,\end{equation} and then \begin{equation}\label{beta bound}|\det(\nabla_v \beta)|\approx 1.\end{equation} Hence the Jacobians  are $$|\det(\nabla_u \alpha)|\cdot \frac{\partial t}{\partial t}\approx 1,$$ and $$ |\det(\nabla_v \beta)|\cdot\frac{\partial s}{\partial s} \approx 1.$$

Now we restrict our attention to $T$. Firstly, note that $T$ is the $(R/\lambda\rho)$-neighborhood of the central curve 
$$\{(\alpha(0, t), t):t\in(-R/\lambda,R/\lambda)\}.\ $$
Secondly, if we denote
\begin{equation}\label{Nx}N^{\sigma}_{\phi^\lambda, R}(x):=\{\xi: (x,\xi)\in N^{\sigma}_{\phi^\lambda, R}\},\end{equation} we shall prove in the next subsection that the $(R/\rho)^{-1}$-cap $N^{\sigma}_{\phi^\lambda, R}(\alpha(0,t),t)$ associated to $T$ is contained in the $R^{-1}$ neighborhood of the set
\begin{equation}\{\eta=\nabla_x\phi((\alpha(0,t),t), (\beta(v,s),s)):(v,s)\in B_{R/\lambda\rho}^{d-1}\times (-R/\lambda,R/\lambda)\}\subset S_{(\alpha(0,t),t)}.\end{equation}
It follows that $(u,t), (v,s)\in B_{R/\lambda\rho}^{d-1}\times (-R/\lambda,R/\lambda)$. 

\subsection{The associated cap}
Later in Section \ref{subsection_parabolic_rescaling} we shall apply the parabolic rescaling
$$(u,t) = (\rho \tilde u, \rho^2 \tilde t),\ \tilde{\eta} = (\rho \eta', \rho^2 \eta_d).$$
However, to apply the inductive hypothesis, it is necessary that the rescaling on $\eta'$ acts on the tangents of the associated cap at the center, while the rescaling on $\eta_d$ is along its normal at the center. Therefore it is crucial that the velocity of the central curve is a normal of the associated cap at the center. In this subsection we check this property.

Recall our tube $T$ is  the $(R/\lambda\rho)$-neighborhood of the central curve
\begin{equation}\label{central-curve}\{(\alpha(0,t),t)\in U:|t|<R/\lambda\}.\end{equation}
 We now show that the cap   $N^{\sigma}_{\phi^\lambda, R}(\alpha(0,t),t)$ can be parametrized by
\begin{equation}\label{associated_cap_of_T}\{\eta=\nabla_x\phi((\alpha(0,t),t), (\beta(v,s),s)):(v,s)\in B_{R/\lambda\rho}^{d-1}\times (-R/\lambda,R/\lambda)\}.\end{equation}
In other words we shall show that the velocity vector of \eqref{central-curve} is a normal of the cap \eqref{associated_cap_of_T} at the center.

Since the curve starts from the origin with initial velocity $(\vec{0},1)$, the center of the cap \eqref{associated_cap_of_T} is
\begin{equation}\label{center-S}\nabla_x \phi((\alpha(0,t), t), (0,0)).\end{equation}

Take $\partial_t$ on both sides of \eqref{implicit-function-theorem-alpha},
\begin{equation}\label{perpendicular}\frac{\partial^2\phi}{\partial x\partial y'}((\alpha(0, t), t), (0, 0))\cdot (\partial_t \alpha(0,t), 1)=0.\end{equation}

Notice that the vectors $$\partial_{y_j}\nabla_x\phi((\alpha(0,t), t), (0,0)),\ j=1,\dots,d-1,$$ generate the tangent space of the cap \eqref{associated_cap_of_T} at its center $\nabla_x\phi((\alpha(0,t), t), (0,0))$. Hence we conclude that the velocity vector of the curve $(\alpha(0,t),t)$, is a normal of \eqref{associated_cap_of_T} at its center.

Denote $$\tilde \phi ((u,t), (v, s))=\phi((\alpha(u,t),t), (\beta(v,s),s))$$
and rescale $(u,t)$, $(v,s)$ from $B_{R/\lambda\rho}^{d-1}\times (-R/\lambda,R/\lambda)$ to $B_{R/\rho}^{d-1}\times(-R, R)$: $$\tilde \phi^\lambda ((u,t), (v, s))=\lambda\,\tilde \phi ((u/\lambda,t/\lambda), (v/\lambda, s/\lambda)).$$ 
Also denote 
$$\alpha^\lambda(u,t)=\lambda\alpha(u/\lambda,t/\lambda),\ \beta^\lambda(u,t)=\lambda\beta(v/\lambda,s/\lambda).$$

Then one can define $\tilde S^\lambda_{(u,t)}$, $N_{\tilde \phi^\lambda, R}$, $N^{\sigma}_{\tilde \phi^\lambda, R}$ in terms of $\tilde{\phi}$.
Similar to \eqref{Nx}, we denote
$$N^{\sigma}_{\tilde \phi^\lambda, R}(u,t):=\{\xi: ((u,t),\xi)\in N^{\sigma}_{\tilde \phi^\lambda, R}\}.$$

We claim that $N^{\sigma}_{\tilde \phi^\lambda, R}(0,t)$ has normal $(\vec{0},1)$ at its center.  As $(\vec{0},1)$ is the velocity vector of the central curve of $T$ under new variables, we are ready for parabolic rescaling.

Now we prove this claim. Since
\begin{equation}\label{nabla-tilde-phi}\nabla_{(u,t)}\tilde \phi^\lambda = (\nabla_{x'}\phi^\lambda\cdot\partial_u\alpha^\lambda, \nabla_{x}\phi^\lambda\cdot(\partial_t\alpha^\lambda, 1))=\begin{pmatrix}
\nabla_u\alpha^\lambda & 0\\\partial_t\alpha^\lambda & 1
\end{pmatrix}\cdot \nabla^\lambda_x\phi,\end{equation}
it follows that
\begin{equation}\label{tilde-S}N^{\sigma}_{\tilde \phi^\lambda, R}(0,t) =\begin{pmatrix}
\nabla_u\alpha^\lambda & 0\\\partial_t\alpha^\lambda & 1
\end{pmatrix} N^{\sigma}_{\phi^\lambda, R}(\alpha(0,t),t)\end{equation}
and by \eqref{center-S} the center of $N^{\sigma}_{\tilde \phi^\lambda, R}(0,t)$ is
\begin{equation}\label{center-tilde-S}\nabla_{(u,t)}\tilde\phi^\lambda ((0,t), (0,0)). \end{equation}

For the normal at the center of $N^{\sigma}_{\tilde \phi^\lambda, R}(0,t)$, since $\alpha^\lambda$ in \eqref{nabla-tilde-phi} is independent in $v$, we can 
take $\nabla_{v}$ on the last component of \eqref{nabla-tilde-phi} to obtain
$$\partial_v\nabla_{(u,t)}\tilde \phi^\lambda=(\nabla_{v}\beta^\lambda(v,s))^t\cdot \frac{\partial^2\phi^\lambda}{\partial y'\partial x} ((0,t),(v,s)) \cdot (\partial_t\alpha^\lambda(0,t), 1)$$
which is $0$ when $(v,s)=(0,0)$ by \eqref{perpendicular}.

As $N^{\sigma}_{\tilde \phi^\lambda, R}(0,t)$ can be parametrized by $v$, this means $N^{\sigma}_{\tilde \phi^\lambda, R}(0,t)$ has normal $(\vec{0}, 1)$ at $(v,s)=(0,0)$, as desired. Also from \eqref{tilde-S} and \eqref{alpha bound} we have that the curvature of $N^{\sigma}_{\tilde \phi^\lambda, R}(0,t)$ is $\approx 1$.

Lastly, we remark that without loss of generality, we may assume the center of $N^{\sigma}_{\tilde \phi^\lambda, R}(0,t)$ is the origin. To see this, notice that we are allowed to change the phase function from $\tilde \phi^\lambda$ to
$$\tilde \phi^\lambda((u,t), (v,s)) - \tilde\phi^\lambda ((0,t), (0,0))=:\Phi^\lambda ((u,t), (v,s)).$$
Then by \eqref{initial-point}, \eqref{implicit-function-theorem-alpha} we have
$$\nabla_{(u,t)} \Phi^\lambda ((u,t), (v,s)) = \nabla_{(u,t)}\tilde \phi^\lambda((u,t), (v,s)) - \nabla_{(u,t)}\tilde\phi^\lambda ((0,t), (0,0)),$$
where the second term is nothing but the center of $N^{\sigma}_{\tilde \phi^\lambda, R}(0,t)$.

\subsection{Checking Microlocal Profile.}
Recall by \eqref{localizing h}, we have restricted $h$ to the tube $T$. Now if we take $\tilde h (u,t)=h(\alpha^\lambda(u,t), t)$, then modulo a $\ra(R)\|h\|_1$ error, its Fourier transform is:
\begin{equation}\label{Fourier-transform-tilde-h}\begin{aligned}\widehat{\tilde h} (\eta) &= \int_{\R}\int_{\R^{d-1}} e^{-2\pi i (w,r)\cdot\eta}h(\alpha^\lambda(w,r),r)\,dw\,dr\\&=\iiiint e^{-2\pi i ((w,r)\cdot\eta- ((\alpha^\lambda(w,r),r)-z)\cdot\xi)} \psi^\sigma_{\phi^\lambda, R}(z, \xi) h(z)\,d\xi\,dz\,dw\,dr\end{aligned}\end{equation}

Change variables $z=(\alpha^\lambda(u,t),t)$, then $h(z)=\tilde{h}(u,t)$. Fix $(u,t)$ and let
\[\mathcal N=\{\eta: |\eta'-\nabla_w\alpha^\lambda\cdot\xi'|\lesssim R^{-\frac12},|\eta_d-(\partial_r\alpha^\lambda\cdot\xi_d+\xi_d)|\lesssim R^{-1},(\alpha^\lambda(u,t),t),\xi)\in N^\sigma_{\phi^\lambda,R}\},\]
which is essentially the linear transformation of $N^\sigma_{\phi^\lambda,R}(\alpha^\lambda(w,r),r)$ via the matrix
\[\begin{pmatrix}\nabla_w \alpha^\lambda & 0 \\ \partial_r\alpha^\lambda &1.\end{pmatrix}\]
Let $\chi_\mathcal N((u,t),\eta)$ be a smooth cutoff function that equals $1$ on $\mathcal N$. 

If we run integration by parts in $(w,r)$, we get a rapid decay in $R$ unless
$$|\eta'-\nabla_w\alpha^\lambda\cdot\xi'|\lesssim \rho/R\le R^{-\frac12}$$
and
$$|\eta_d-(\partial_r\alpha^\lambda\cdot\xi_d+\xi_d)|\lesssim R^{-1}. $$
Therefore, at the expense of a $\ra(R)$ term, one can insert the cutoff $\chi_\mathcal N((u,t),\eta)$ into our integral \eqref{Fourier-transform-tilde-h}. Then, a similar argument shows that one can drop the cutoff $\psi^\sigma_{\phi^\lambda, R}(z, \xi)$ by losing another  $\ra(R)$ term. Therefore, modulo a $\ra(R)||\tilde{h}||_{L^1}$ error, we have
\begin{equation}\begin{aligned}\widehat{\tilde h} (\eta) &=\int\cdots\int e^{-2\pi i ((w,r)\cdot\eta- ((\alpha^\lambda(w,r),r)-(\alpha^\lambda(u,t),t))\cdot\xi)}\chi_\mathcal N((u,t),\eta) \tilde{h}(u,t)\,d\xi\,du\,dt\,dw\,dr\\&= \int_{\R}\int_{\R^{d-1}} e^{-2\pi i (w,r)\cdot\eta}\chi_\mathcal N((\omega,r),\eta)\,\tilde{h}(\omega,r)\,dw\,dr\end{aligned}\end{equation}

Recall we have pointed out that
$$\mathcal{N}=\begin{pmatrix}\nabla_w \alpha^\lambda & 0 \\ \partial_r\alpha^\lambda &1\end{pmatrix}N^\sigma_{\phi^\lambda,R}(\alpha^\lambda(w,r),r),$$
which is exactly $N^\sigma_{\tilde{\phi}^\lambda,R}(\omega,r)$ by the analogous of \eqref{tilde-S} in $T$. Therefore
$$\chi_{\mathcal N}((\omega,r),\eta)= \psi^{\sigma}_{\tilde \phi^\lambda, R} ((w,r), \eta),$$
and  modulo a $\ra(R)||\tilde{h}||_{L^1}$ error, we have
\begin{equation}\label{microlocal-support-tilde-h}\tilde h (u,t) = \int_{\R^d}\iint e^{-2\pi i ((w,r)-(u,t))\cdot\eta} \psi^{\sigma}_{\tilde \phi^\lambda, R} ((w,r), \eta)\,\tilde h (w,r)\,dw\,dr\,d\eta,\end{equation}
where $(u,t), (w,r)\in B_{R/\rho}^{d-1}\times (-R, R)$.

\subsection{Parabolic Rescaling}\label{subsection_parabolic_rescaling} The parabolic rescaling argument is a standard tool in problems related to Fourier restriction theory. See for reference the applications of parabolic rescaling in the  decoupling theory in the works of Bourgain and Demeter \cite{BD15}. The argument that we use here is a variant of the argument used by Beltran, Hickman and Sogge \cite{BHS17}.
Here we apply the parabolic rescaling $(u,t) = (\rho \tilde u, \rho^2 \tilde t)$, $(w,r) = (\rho \tilde w, \rho^2 \tilde r)$, $\tilde \eta = (\rho\eta', \rho^2\eta_d)$, and denote $$\tilde h^\rho(\tilde u,\tilde t)=\tilde h (\rho\tilde u,\rho^2\tilde t).$$ Then $(\tilde u, \tilde t), (\tilde w, \tilde r)\in B_{R/\rho^2}$ and \eqref{microlocal-support-tilde-h} is equivalent to
\begin{equation}\label{microlocal-support-reduction}\tilde h^\rho(\tilde u,\tilde t)=\int_{\R^d}\int_{\R}\int_{\R^{d-1}} e^{-2\pi i((\tilde w,\tilde r)-(\tilde u,\tilde t))\cdot\tilde \eta}\, \psi^{\tilde\sigma}_{\tilde \phi^\lambda,R}((w, r), \eta)\, \tilde h^\rho(\tilde w,\tilde r)\,d\tilde w\,d\tilde r\,d\tilde \eta.\end{equation}

By the geometric observation we mentioned at the beginning of this section, it follows that
$$\psi^{\tilde \sigma}_{\tilde \phi^\lambda,R}((w,r), \eta) \sim \psi_{(\tilde  \phi_\rho)^{\lambda/\rho^2},R/\rho^2}((\tilde w,\tilde r), \tilde \eta), $$
where the function $\tilde  \phi_\rho((\tilde u,\tilde t), (\tilde v,\tilde s))$ is chosen such that 
$$(\tilde  \phi_\rho)^{\lambda/\rho^2}((\tilde u,\tilde t), (\tilde v,\tilde s)) = \tilde \phi^\lambda((\rho\tilde u,\rho^2\tilde t), (\rho\tilde v, \rho^2\tilde s)).$$

Hence  \eqref{microlocal-support-reduction} becomes
$$\tilde h^\rho(\tilde u,\tilde t)=\int_{\R^d}\int_{\R}\int_{\R^{d-1}} e^{-2\pi i((\tilde v,\tilde s)-(\tilde u,\tilde t))\cdot\tilde \eta}\, \psi_{(\tilde \phi_\rho)^{\lambda/\rho^2},R/\rho^2}((\tilde v,\tilde s), \tilde \eta)\, \tilde h^\rho(\tilde v,\tilde s)\,d\tilde v\,d\tilde s\,d\tilde \eta.$$

Now we can apply our induction hypothesis to $\tilde h^\rho$ at scale $(R/\rho,\lambda/\rho)$ to obtain 
\[\| \tilde h^\rho\|_{L^p(\omega_{B_{R/\rho^2}})}\le C'_\epsilon D_\epsilon(R/\rho^2,\lambda/\rho^2)(R/\rho^2)^{\epsilon} (\sum_{\bar \tau: {\rho R^{-1/2}-caps}}\| (\tilde h^\rho)_{\bar \tau}\|^2_{L^p(\omega_{B_{R/\rho^2}})})^\frac12.\]

By our reduction above,
$$\|  h\|_{L^p(\omega_{T})}\approx \rho^{-(d+1)/p}\| \tilde h^\rho\|_{L^p(\omega_{B_{R/\rho^2}})}$$
and similarly for each $R^{-1/2}$-cap $\tau$, there is a $\rho R^{-1/2}$-cap $\bar \tau$ such that
$$\|  h_\tau\|_{L^p(\omega_{T})}\approx \rho^{-(d+1)/p}\|  (\tilde h^\rho)_{\bar\tau}\|_{L^p(\omega_{B_{R/\rho^2}})}.$$

Hence
\[\|  h\|_{L^p(\omega_{T})}\le C'_\epsilon D_\epsilon(R/\rho^2,\lambda/\rho^2)(R/\rho^2)^{\epsilon} (\sum_{\tau: { R^{-1/2}-caps}}\| h_\tau\|^2_{L^p(\omega_{T})})^\frac12,\]
and the proof is complete.

\section{A Refined Microlocal decoupling Inequality}
\label{sectionrefined} 
In this section, we shall use the microlocal decoupling inequalities proved above and the approach in the Euclidean case (\cite{GIOW19}) to obtain further refined decoupling inequalities. In this section, we shall only present the theorem in the two-dimensional case for the critical index $p=6$  for the sake of simplicity, even though the result can be readily generalized to higher dimensions case for all $2\le p\le2(d+1)/(d-1)$ with the same proof. Given $1\le R\le \lambda$, $\delta>0$, we shall consider  collections of curved tubes $\mathbb T=\cup_\tau \mathbb T_\tau$ associated to the phase function $\phi^{\lambda}$, and arcs $\tau\subset S^1$ of length $R^{-1/2}$. A curved tube $T$ of length $R$ and cross-section radius $R^{\frac12+\delta}$ is in $\mathbb T_\tau$, if the central axis of $T$ is the curve $\gamma_\tau^t$ given by
$$\gamma_\tau^t=\left\{x:-\frac{\nabla_y\phi^\lambda(x,(t,0))}{|\nabla_y\phi^\lambda(x,(t,0))|}{=\text{center of  }\tau}\right\},$$
here $t$ is chosen from a maximal $R^{\frac12+\delta}$ separated subset of $\mathbb R$, and $\theta$ is the center of $\tau\subset S^1$. 
For instance, in the case $\phi=d_g$, the angle between central geodesic $\gamma_\tau$ of $T$ and the first coordinate axis is  $\theta=$\,center of $\tau$. We say a function $f$ is microlocalized to a tube $T\in\mathbb T_\tau$ if $f$ has microlocal support essentially in $N^\tau_{\phi^\lambda,R}$, and $f$ is essentially supported in $T$ in physical space as well. We remark that the tubes considered in this section is different in size compared to those in the previous section.
\begin{theorem}\label{refined-decoupling-theorem}Let $1\le R\le\lambda$.
	Let $f$ be a function with microlocal support in $N_{\phi^\lambda,R}$. Let $\mathbb T$ be a collection of tubes associated to $R^{-1/2}$ caps, and $\mathbb W\subset \mathbb T$. Suppose that each $T\in\mathbb W$ is contained in the ball $B_R$. Let $W$ be the cardinality of $\mathbb W$. Suppose that 
	\[f=\sum_{T\in\mathbb W}f_T,\]
	where each $f_T$ is microlocalized to $T\in\mathbb W$, and $\|f_T\|_{L^6(\omega_{B_R})}$ is roughly constant among all $T\in\mathbb W$. If $Y$ is a union of $R^\frac12$-cubes in $B_R$ each of which intersects at most $M$ tubes $T\in\mathbb W$. Then
	\begin{equation}\label{rdec}\|f\|_{L^6(Y)}\le C_\epsilon R^{\epsilon}\left(\frac{M}{W}\right)^{\frac13}(\sum_T \|f_T\|^2_{L^6(\omega_{B_R})})^\frac12.\end{equation}
	
\end{theorem}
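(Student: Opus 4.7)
The plan is to adapt the refined decoupling scheme used by Guth--Iosevich--Ou--Wang in the Euclidean setting (cf.\ \cite{GIOW19}) to our microlocal framework. The main ingredients are: a local version of Theorem \ref{decoupling-theorem} on each $R^{1/2}$-cube of $Y$, the tube-packing geometry of $\mathbb{W}$, H\"older's inequality applied to the at-most-$M$ tubes meeting any given cube, and the near-constancy hypothesis on the $L^6$-norms of the $f_T$.

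Cover $Y$ by essentially disjoint $R^{1/2}$-cubes $q$. On each such $q$ I would apply the local decoupling
\[
\|f\|_{L^6(\omega_q)} \le C_\varepsilon R^\varepsilon \Bigl(\sum_\tau \|f_\tau\|_{L^6(\omega_q)}^2\Bigr)^{1/2},
\]
a localization of Theorem \ref{decoupling-theorem} to the borderline scale $R^{1/2}$. By construction of $\mathbb{T}_\tau$, each $R^{1/2}$-cube meets at most $O(1)$ tubes from any single $\mathbb{T}_\tau$; writing $\mathbb{W}(q) := \{T \in \mathbb{W} : T \cap q \neq \emptyset\}$, this gives $|\mathbb{W}(q)| \le M$ and
\[
\sum_\tau \|f_\tau\|_{L^6(\omega_q)}^2 \lesssim \sum_{T \in \mathbb{W}(q)} \|f_T\|_{L^6(\omega_q)}^2.
\]

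H\"older's inequality on a sum of at most $M$ terms then yields $\bigl(\sum_{T \in \mathbb{W}(q)} \|f_T\|_{L^6(\omega_q)}^2\bigr)^3 \le M^2 \sum_{T \in \mathbb{W}(q)} \|f_T\|_{L^6(\omega_q)}^6$, whence
\[
\|f\|_{L^6(\omega_q)}^6 \le C_\varepsilon R^{6\varepsilon} M^2 \sum_{T \in \mathbb{W}(q)} \|f_T\|_{L^6(\omega_q)}^6.
\]
Summing over $q \subset Y$ and using that each $f_T$ is essentially supported on its tube $T$ (so $\sum_q \|f_T\|_{L^6(\omega_q)}^6 \lesssim \|f_T\|_{L^6(\omega_{B_R})}^6$ by finite overlap and Schwartz decay off $T$), I arrive at $\|f\|_{L^6(Y)}^6 \le C_\varepsilon R^{6\varepsilon} M^2 \sum_{T \in \mathbb{W}} \|f_T\|_{L^6(\omega_{B_R})}^6$. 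The near-constancy $\|f_T\|_{L^6} \approx A$ then yields $\sum_T \|f_T\|_{L^6}^6 \approx W A^6 = W^{-2}(WA^2)^3 \approx W^{-2}\bigl(\sum_T \|f_T\|_{L^6}^2\bigr)^3$, and taking sixth roots produces the target inequality.

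The main obstacle is the local decoupling used in the very first step, since one is asking for a decomposition into $R^{-1/2}$-caps on balls of radius $R^{1/2}$ -- below the natural Bourgain--Demeter regime, where such caps sit precisely at Fourier uncertainty. The enabling feature is that $f$ has microlocal support in $N_{\phi^\lambda,R}$ of Fourier thickness $R^{-1}$, strictly finer than the thickness $R^{-1/2}$ natural at scale $R^{1/2}$; this excess thinness is what makes finer-than-natural decoupling possible. I would establish it either by imitating the induction-on-scales scheme of Theorem \ref{decoupling-theorem}, invoking Lemma \ref{reduce-to-BD} at the base case and performing parabolic rescalings that exploit the $R^{-1}$ thickness, or by combining the $L^2$-orthogonality of the nearly-constant wave packets $f_\tau|_q$ with a Bernstein $L^\infty$ control and interpolation at $p = 6$; the former route loses only $R^\varepsilon$ while the latter loses an extra factor of $M^{1/3}$, so the inductive approach is preferred.
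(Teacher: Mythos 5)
Your scheme breaks at the very first step, and the fix you propose does not repair it. The ``local decoupling'' you need — decomposing $f$ on an $R^{1/2}$-cube $q$ into $R^{-1/2}$-caps $\tau$ with only an $\epsilon$-loss and with $\|f_\tau\|_{L^6(\omega_q)}$ on the right-hand side — is genuinely false, and the failure is by a power of $R$, not $R^\epsilon$. Here is the obstruction: once you localize to a cube of side $R^{1/2}$, the frequency resolution is $R^{-1/2}$, and within a single $R^{-1/4}$-cap on the surface, the centers of all the $R^{-1/2}$-caps sit within $R^{-1/2}$ of one another in the ``curved'' direction. So at this spatial scale they are Fourier-indistinguishable transversally, and the wave packets $f_\tau|_q$ restricted to $\tau\subset\sigma$ behave like plane waves with frequencies $k R^{-1/2}$, $k=0,\dots,R^{1/4}$, along one direction; a Dirichlet-kernel interference then gives $\|f\|_{L^6(q)}^6\gtrsim R^{1/2}\cdot\bigl(\sum_{\tau\subset\sigma}\|f_\tau\|^2_{L^6(q)}\bigr)^3$. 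The ``excess thinness'' argument you invoke does not rescue this: once you restrict to $q$, the uncertainty principle already smears the $R^{-1}$-thick profile to thickness $R^{-1/2}$, so the extra thinness is no longer available locally. And the induction-on-scales of Theorem \ref{decoupling-theorem} produces decoupling at the natural scale (caps of width $r^{-1/2}$ on $r$-balls); running it will never produce caps a full square root finer than the ball scale. So neither fallback you mention gives the local estimate at an $R^\epsilon$ loss.

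The resolution, which is what the paper actually does, is to accept the intermediate scale $R^{-1/4}$ and work with fat tubes $\square$ of dimensions $R^{3/4}\times R$. One applies Theorem \ref{decoupling-theorem} at scale $R^{1/2}$ on each cube, decoupling into $R^{-1/4}$-caps to get the pieces $f_\square$, and then performs an induction on scales on the \emph{refined} statement itself: after a parabolic rescaling that turns $\square$ into a ball of radius $R^{1/2}$ and the $R^{1/2}\times R^{3/4}$ sub-tubes into $R^{1/4}$-balls, one applies the refined decoupling at scale $R^{1/2}$ to $f_\square$, with its own counts $M'$, $W'$, and a separate pigeonhole count $M''$ for the number of fat tubes through a cube. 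The crucial point that a single local-decoupling-on-cubes argument cannot capture is that the right-hand side of the refined inequality carries $\|f_T\|_{L^6(\omega_{B_R})}$ \emph{over the whole tube}, which is substantially larger than $\|f_T\|_{L^6(q)}$; assembling across the full length of the fat tube is precisely what recovers that global norm and makes the power of $R$ in your would-be local estimate disappear. The bookkeeping $M'M''\le M$, $|\mathbb B|W'\le W$ and the near-constancy of $\|f_T\|_{L^6}$ then close the induction, much as in your steps 4--7 but with the two-scale decomposition threaded through.
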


This matches the refined decoupling inequalities appeared in \cite{GIOW19}, and the proof is very similar. We include the proof here for the sake of completeness.

\begin{proof}Like in the proof for the Euclidean case, we shall employ induction on scales via parabolic rescaling. We shall use the microlocal decoupling inequality \eqref{dec} instead of the Bourgain--Demeter decoupling inequality as an input. Since parabolic rescaling is required, we shall prove a stronger theorem which works for all phase function $\phi$ in our class. 
	
We claim that it is enough to consider that case that $\|f\|_{L^p(Q)}$ is approximately a constant for all $R^{1/2}$ cubes $Q\subset Y$. This technique is called dyadic pigeonholing, which is standard in the study of modern harmonic analysis. We will do this reduction a couple of times in this paper, while details are only provided for this one. More precisely, sort $Q\subset Y$ into different groups
$$Y_1,\dots, Y_{N}, Y_{\geq (N+1)},$$
where $N=10d\,\log R$,
$$Y_{j}=\{Q\subset Y: 2^{-j+1}\|f\|_{L^p(Y)}<\|f\|_{L^p(Q)} \leq 2^{-j}\|f\|_{L^p(Y)}\},$$ $$Y_{>N}=\{Q\subset Y: \|f\|_{L^p(Q)}< 2^{-(N+1)}\|f\|_{L^p(Y)}\}.$$
Since by the trivial estimate $\|f\|_{L^p(Y_{>N})}<\frac{1}{10}\|f\|_{L^p(Y)}$, we have
$$\|f\|_{L^p(Y)}\lesssim \sum_{j=1}^{10d\,\log R}\|f\|_{L^p(Y_j)},$$ and therefore there exists $j_0$ such that
\begin{equation}\label{pigeonholing_log}\|f\|_{L^p(Y)}\lesssim (\log R)\,\|f\|_{L^p(Y_{j_0})}.\end{equation}
Recall by our definition $\|f\|_{L^p(Q)}$ is approximately a constant for all $R^{1/2}$-cubes in $Y_{j_0}$. It follows that, if we can prove \eqref{rdec} on $Y_{j_0}$, then \eqref{rdec} holds on $Y$ with another $\log R$ factor, that can be absorbed by the $C_\epsilon R^\epsilon$ term.

From now we assume that $\|f\|_{L^p(Q)}$ is approximately a constant for all $R^{1/2}$ cubes $Q\subset Y$.

	For each $R^{-1/4}$-cap $\sigma$, we consider the collection of fat tubes in $B_R$ with length $R$ and cross-section radius $R^{3/4}$, and central axis being $\gamma^t_\sigma$ for some $t\in \mathbb R$. Let $\mathbb F_\sigma$ be a subcollection of such fat tubes with finite overlaps that covers $B_R$. Let $\mathbb F=\cup_\sigma \mathbb F_\sigma,$ then each fat tube in $\mathbb F$ is associated to one $\sigma=\sigma(\square)$. Then we consider the subcollection of $\mathbb W$
	\[\mathbb W_{\square}:=\{T\in \mathbb W: T\in\mathbb T_\tau \text{ for some $R^{-\frac12}$-cap }\tau\subset \sigma(\square)\text{ and }T\subset \square\}.\]
	
	Now consider $f_{\square}=\sum_{T\in\mathbb W_{\square}}f_T,$ then it is easy to see that $f_{\square}$ has microlocal support in $N_{\phi^\lambda,R^\frac12}^\sigma.$ Notice that every $R^\frac12$-cube $Q$ lies in a unique fat tube $\square$ associated to each cap $\sigma$, and thus $f_\square=f_{\sigma(\square)}$ on each $Q$. By applying Theorem \ref{decoupling-theorem} at scale $R^\frac12$, we get
	\begin{equation}\|f\|_{L^6(Q)}\le C_\epsilon R^{\epsilon}(\sum_{\square} \|f_{\square}\|^2_{L^6(Q)})^\frac12.\end{equation}
	
	Now, we are in a position to begin the induction on scales argument. The base case when $R=1$ is trivial. We assume that \eqref{rdec} is true at scale $R^\frac12$, we shall prove it for the scale $R$. 
	
	Let us decompose $\square$ into $R^\frac12\times R^\frac34$ tubes with central curves $\gamma_\sigma$.  Let $Y_\square$ be the union of the $R^\frac12\times R^\frac34$ tubes which intersect $Y$. Now we do dyadically pigeonhole twice. First, by refining the collection of $Y_\square$, we may assume that all the $R^\frac12\times R^\frac34$ tubes in $Y_\square$ intersect $\sim M'$ tubes $T\in\mathbb W_\square$, for a $\approx 1$ fraction of $Q\subset Y$.  Secondly,  by refining the collection of fat tubes $\square$, we may assume $|\mathbb W_\square|$ is about a constant $W'$ for a $\approx 1$ fraction of $Q\subset Y$.

	Note that the decomposition $f_{\square}=\sum_{T\in\mathbb W_{\square}}f_T,$ on each $Q$ is equivalent to the setup of \eqref{rdec} at scale $R^\frac12$.
	Indeed, after employing the parabolic rescaling introduced in the proof of Lemma \ref{para} to $\square$, we see that each of the $R^\frac12\times R^\frac34$ tubes essentially become a ball of radius $R^\frac14$, and $\square$ essentially becomes a ball of radius $R^\frac12$. Our induction hypothesis then implies that
	\[\|f_\square\|_{L^6(Y_\square)}\le C_\epsilon R^{\epsilon/2}\left(\frac{M'}{W'}\right)^\frac13(\sum_{T\in\mathbb W_\square} \|f_{T}\|^2_{L^6})^\frac12.\]
	Finally, we perform dyadic pigeonhole one last time for the number of fat tubes $\square$ such that $Q\subset Y_\square$. This results in a subset $Y'\subset Y$ which is a union of $R^\frac12$ cubes $Q\subset$ that each lies in $\sim M''$ choices of $\square\in\mathbb B.$ Now for each $Q\subset Y'$, we have
	\[\|f\|_{L^6(Q)}\le C_\epsilon R^{\epsilon} \Big\|\sum_{\square\in \mathbb B:Q\subset Y_\square}{f_\square}\Big\|_{L^6(Q)}.\]
	Invoking decoupling inequality \eqref{dec}, we have
	\[\|f\|_{L^6(Q)}\le C_\epsilon R^{\epsilon} \Big(\sum_{\square\in \mathbb B:Q\subset Y_\square}\|{f_\square}\|_{L^6(Q)}^2\Big)^\frac12.\]
	Noting that the number of terms in the sum is $\sim M''$, if we apply H\"older's inequality, raise everything to the $6$-th power and sum over $Q\subset Y'$, we see that
	\[\|f\|^6_{L^6(Y)}\le C_\epsilon R^{\epsilon} (M'')^2\sum_{\square\in\mathbb B}{\|f_\square\|^6_{L^6(Y_\square)}}.\]
	Using our bound on ${\|f_\square\|^6_{L^6(Y_\square)}}$ from the induction hypothesis and taking account of loss from dyadic pigeonholing, we see that
	\[\|f\|^6_{L^6(Y)}\le C_\epsilon R^{4\epsilon}\left(\frac{M'M''}{W'}\right)^2\sum_{\square\in\mathbb B}(\sum_{T\in\mathbb W_\square} \|f_{T}\|^2_{L^6})^3.\]
	Since $\|f_{T}\|_{L^6}$ is about a constant, $\sum_{T\in\mathbb W_\square} \|f_{T}\|^2_{L^6}=\frac {W' }W\sum_{T\in\mathbb W} \|f_{T}\|^2_{L^6}$. we have
	\[\|f\|^6_{L^6(Y)}\le C_\epsilon R^{4\epsilon}\left(\frac{M'M''}{W'}\right)^2\frac{|\mathbb B|W'}{W}(\sum_{T\in\mathbb W} \|f_{T}\|^2_{L^6})^3.\]
	Noting that $M'M''\le M$ and $|\mathbb B|W'\le W$, we have
	
	\[\|f\|_{L^6(Y)}\le C_\epsilon R^{\frac23\epsilon}\left(\frac{M}{W}\right)^\frac13(\sum_{T\in\mathbb W} \|f_{T}\|^2_{L^6})^\frac12,\]
	which closes the induction thanks to the $R^{-\frac13\epsilon}$ gain.
\end{proof}

\section{Application to Riemannian distance set}
\label{sectiondecouplingtodistance} 

\subsection{$L^2$ method of Mattila--Liu in $\mathbb R^d$. }
In Euclidean spaces, let $d(x, y)=d^x(y)= |x-y|$, and $\dim_{\mathcal H}$ denote the Hausdorff dimension. For any compact set $E\subset \R^d$, and any $0<\alpha< \dim_{\mathcal H} E$, the well known Frostman Lemma (see, e.g. \cite{Mat15}) guarantees that there exists a probability measure $\mu$ on $E$ satisfying
$$\mu(B_r(x))\lesssim r^\alpha,\ \forall\,r>0,\,x\in\R^d.$$

Mattila's approach in \cite{Mat87} is to consider the $L^2$ boundedness of $d_*(\mu\times\mu),$ which would imply  $|\Delta(E)|>0$. This idea was later generalized by Liu \cite{Liu18}, who considered the $L^2$ boundedness of $d^x_*(\mu)$, which would imply $|\Delta_x(E)|>0.$

 Before \cite{GIOW19}, the best known dimensional exponent in $\mathbb R^2$ is $\dim_{\mathcal H} E>4/3$, obtained by plugging Wolff's estimate \cite{W99} i nto the approach of Mattila--Liu. The following result shows $4/3$ cannot be improved via this $L^2$ method. The construction is based on train-track examples, which is described in details in Section $6$ of \cite{GIOW19}. 

\begin{proposition}[Guth, Iosevich, Ou, Wang, 2019] For every $1\leq \alpha < 4/3$ and every positive number $C$, there is a probability measure $\mu$ on $B_1(0)$ with the following properties:
	
	\begin{enumerate}
		\item For any ball $B_r(x)$, $\mu(B_r(x)) \lesssim r^\alpha$.
		
		\item If $d(x,y) := |x-y|$, then
		
		$$ \| d_* (\mu \times \mu) \|_{L^2} > C. $$
		
		\item If $d^x(y) := |x-y|$, then for every $x$ in the support of $\mu$, 
		
		$$ \| d^x_*(\mu) \|_{L^2} > C. $$
		
	\end{enumerate}	
	
\end{proposition}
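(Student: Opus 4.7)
The aim is to produce, for each $\alpha\in[1,4/3)$ and each $C>0$, an explicit probability measure $\mu$ on $B^2(1)$ whose distance pushforward has $L^2$-norm exceeding $C$ (both in the full and pinned versions), while satisfying the Frostman bound (1) with an absolute implied constant. The natural candidates are discretized lattice-type measures, exploiting the classical number-theoretic degeneracy of Euclidean distances on planar integer lattices.

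\emph{Step 1 (Warm-up: single-scale lattice).} Let $M$ be a large integer and let $\mu$ be the uniform probability measure on the $\delta$-neighborhood of the rescaled lattice $M^{-1/2}\mathbb{Z}^2\cap [0,1]^2$, with $\delta = M^{-1/\alpha}$. A case analysis in $r$ against the scales $\delta$ and $M^{-1/2}$ verifies the Frostman bound with an absolute constant. Using that the number of ordered lattice pairs at squared distance $k/M$ equals $M\,r_2(k)$, where $r_2(k)$ is the sum-of-two-squares representation count, together with the classical asymptotic $\sum_{k\le M}r_2(k)^2 \sim M\log M$, one computes
\[
\|d_*(\mu\times\mu)\|_{L^2}^2 \sim \frac{1}{M^2\delta}\sum_{k\le M}r_2(k)^2 \sim (\log M)\,M^{1/\alpha - 1},
\]
which tends to $\infty$ as $M\to\infty$ whenever $\alpha \le 1$. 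This already handles the boundary case $\alpha = 1$.

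\emph{Step 2 (Full range $1 < \alpha < 4/3$: multi-scale refinement).} For $\alpha > 1$ the single-scale estimate no longer grows in $M$, so a Cantor-type iteration is required: one stacks rescaled lattices inside the thickened atoms of the previous scale across $K$ nested levels, tuning the scale ratios and branching numbers so that (1) is preserved with an absolute constant at every intermediate radius. Each of the $K$ levels contributes its own $r_2$-coincidences, and these accumulate across scales. A careful inductive bookkeeping shows that $\|d_*(\mu\times\mu)\|_{L^2}^2$ diverges as $K\to\infty$ precisely for $\alpha < 4/3$; the threshold $4/3$ emerges as the exact balance point between the Frostman penalty and the per-scale number-theoretic gain, mirroring the numerology of Wolff's $L^2$ bound at $\alpha > 4/3$.

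\emph{Step 3 (Passage to the pinned version).} Since $d_*(\mu\times\mu)=\int d_*^x(\mu)\,d\mu(x)$, Cauchy--Schwarz applied pointwise in $t$ gives
\[
\|d_*(\mu\times\mu)\|_{L^2}^2 \le \int \|d_*^x(\mu)\|_{L^2}^2\, d\mu(x).
\]
If the left side exceeds $2C^2$, the set $A=\{x\in\operatorname{supp}\mu : \|d_*^x(\mu)\|_{L^2}>C\}$ has positive $\mu$-measure. Restricting $\mu$ to $A$ and renormalizing yields a probability measure still obeying the Frostman bound (the implied constant changes by a factor $1/\mu(A)$), for which (3) holds at every support point, and (2) then follows a fortiori.

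\emph{Main obstacle.} The technical heart is Step 2: verifying both that the inductive bookkeeping of $r_2$-coincidences across the $K$ scales indeed produces the claimed $L^2$ blow-up throughout $\alpha\in(1,4/3)$, and that the Frostman condition can be preserved with a uniform absolute constant. Matching the resulting threshold to precisely $4/3$ is the delicate combinatorial step, and is what makes the proposition a sharpness counterpart to the Wolff/Liu $L^2$ theorem.
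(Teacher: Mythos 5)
The paper does not actually prove this proposition; it quotes it from Guth--Iosevich--Ou--Wang (Proposition~6.1 of \cite{GIOW19}) as the reason a pure $L^2$ estimate on $d_*^x(\mu)$ cannot work below $4/3$, and the construction there is the ``rail track'' measure already referenced elsewhere in Section~6: a product-type measure concentrated on a thin family of nearly-parallel segments, exploiting the fact that for a point $x$ transverse to a short segment $\ell$ at distance $\approx 1$, the map $y\mapsto |x-y|$ on $\ell$ is nearly constant (the variation is quadratic in the arclength), so $d^x_*$ compresses each rail to an interval far shorter than its length. That directional-concentration mechanism is exactly what makes the Wolff/Erd\H{o}gan $L^2$ bound sharp at $4/3$; it is geometric and Fourier-analytic in nature, not number-theoretic.

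Your Step~2 has a real gap at its core. Your own single-scale computation gives $\|d_*(\mu\times\mu)\|_{L^2}^2\sim (\log M)\,M^{1/\alpha-1}$, which for $\alpha>1$ tends to $0$; at each generation of a Cantor iteration you would be multiplying in a factor that is less than $1$, so it is not at all clear how ``stacking'' scales yields divergence, and the assertion that ``the threshold $4/3$ emerges as the exact balance point'' is asserted rather than shown. Lattice coincidences and the $\sum r_2(k)^2\sim M\log M$ asymptotic produce the $\alpha=1$ (i.e. $d/2$) sharpness example of Falconer, not the $L^2$-method sharpness at $4/3$; to get the latter you need the anisotropic rail-track geometry. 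Step~3 also does not work as written: restricting $\mu$ to $A$ and renormalizing gives $d^x_*(\mu|_A)\le d^x_*\mu$ pointwise, so $\|d^x_*(\mu_A)\|_{L^2}\le \|d^x_*\mu\|_{L^2}/\mu(A)$ is only an upper bound, and one cannot conclude $\|d^x_*(\mu_A)\|_{L^2}>C$ for $x\in A$. Likewise the final claim that ``(2) then follows a fortiori'' from (3) reverses Minkowski's integral inequality: one always has $\|d_*(\mu\times\mu)\|_{L^2}\le \int \|d^x_*\mu\|_{L^2}\,d\mu(x)$, so a lower bound on every pinned norm does not yield a lower bound on the unpinned norm. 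In the actual construction of \cite{GIOW19}, (2) and (3) are verified directly for the explicit rail-track measure.
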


This shows, to break the $4/3$ barrier, new ideas beyond the $L^2$ method above are required. In \cite{GIOW19}, authors determine good/bad wave packets, decompose $\mu=\mu_{good}+\mu_{bad}$, and then show when $\alpha>5/4$ the contribution from $\mu_{bad}$ is negligible and the $L^2$ method works on $\mu_{good}$. We shall employ the same strategy in the Riemannian case.

\subsection{Riemannian distance set and the pruning procedure.}
Train track examples can be constructed similarly on manifolds, by forming train-tracks along geodesics in a small enough local chart, and thus the direct analog of Proposition 6.1 holds in the setting of two-dimensional Riemannian manifolds. For this reason we cannot expect to obtain the desired result by estimating the $L^2$ norm of the distance measure, so we follow \cite{GIOW19} and eliminate the contribution of the ``rail tracks" using a suitable pruning procedure. This is where we now turn our attention. 

Let $(M,g)$ be a Riemannian surface and $E\subset M$ be a compact set with positive $\alpha$-dimensional Hausdorff measure. Rescaling the metric, if necessary, we can assume that $E$ is contained in the unit geodesic disk of a coordinate patch of $M$, and in this disk, the metric $g$ is very close to the flat metric. Let $E_1$ and $E_2$ be subsets of $E$ with positive $\alpha$-dimensional Hausdorff measure so that the distance from $E_1$ to $E_2$ is comparable to $1$, and $E_j$ is supported  in some geodesic disk $B_j$ of radius $1/100$. Each $E_i$ admits a probability measure $\mu_i$ such that $\supp\, \mu_i\subset E_i$ and $\mu_i(B(x,r))\lesssim r^\alpha.$ Here $B(x,r)$ denotes the geodesic disk of radius $r$.

Let $d_g$ be the Riemannian distance function on $M$. We define the pushforward measure $d_*^x(\mu)=(d_g)_*^x(\mu)$ by
\[\int_{\R} h(t)\,d_*^x(\mu)=\int_M h(d_g(x,y))\,d\mu(y).\]

\vskip.125in 

\subsection{Microlocal decomposition}\label{wave-packet-decomp}
We shall use the geodesic normal coordinates $\{(x_1,x_2)\}$ about a given point $x_0$ in the middle of $E_1$ and $E_2$, such that in this coordinate system, 
$$E_1\subset\{(x_1,x_2): |x_1|\le\frac1{20}, x_2\ge\frac13 \},$$ and 
$$E_2\subset\{(x_1,x_2): |x_1|\le\frac1{20}, x_2\le-\frac13 \}.$$ More precisely, we put $E_1$ and $E_2$ on the 2nd coordinate axis symmetrically. Then we identify the cotangent space at each point on the $x_1$-axis using parallel transport along this axis.

The microlocal decomposition is performed with respect to the geodesic flow transverse to the $x_1$ axis. 

Let $R_0\gg1$, $R_j=2^jR_0$.  Cover the part of the annulus $R_{j-1}\le|\xi|\le R_j$ where $\{(\xi_1,\xi_2):\frac{|\xi_2|}{|\xi|}\ge\frac12\}$ by rectangular blocks $\tau$ with size about $R_j^\frac12\times R_j$, with long direction of each block being the radial direction. Then we need two  big blocks to cover the remaining part of the annulus.  We choose a partition of unity subordinate to this cover, so that
\[1=\psi_0+\sum_{j\ge1}\Big[\sum_{\tau}\psi_{j,\tau}+\psi_{j,-}+\psi_{j,+}\Big],\]
where $\psi_{j,\tau}$ is supported on $\tau$, $\psi_{j,+}$ is a bump function adapted to the set 
\[\{(\xi_1,\xi_2):R_{j-1}\le |\xi|\le R_j,\  0\le\frac{\xi_2}{|\xi|}\le\frac12\},\]
and, similarly, $\psi_{j,-}$ is a bump function adapted to the set 
\[\{(\xi_1,\xi_2):R_{j-1}\le |\xi|\le R_j,\  -\frac12\le\frac{\xi_2}{|\xi|}\le0\}.\]

Let $\delta>0$ be a small constant, and $T_0$ be the geodesic tube of width $1/10$ about the $x_2$-axis.  For each $(j,\tau)$, we look at geodesics $\gamma$ so that $\gamma$ intersects the $x_1$-axis, and the tangent vector of $\gamma$ at the intersection point is pointing in the direction of $\tau$. Now we use geodesic tubes $T_\gamma$ of size $R_j^{-1/2+\delta}\times 1$ about such geodesics to cover $T_0$. Let $\mathbb T_{j,\tau}$ be the collection of all these tubes, and $\mathbb T_j=\cup_{\tau}\mathbb T_{j,\tau}$, $\mathbb T=\cup_{j,\tau}\mathbb T_{j,\tau}$. Let $\eta_T$ be a partition of unity subordinate to this covering, that is, in the unit disk,

\[\sum_{T\in\mathbb T_{j,\tau}}\eta_T=1.\]

Now we need to use a microlocal decomposition for functions supported in $B_1$ which respects the geodesic flow. For function $f$ with $\supp\, f\subset B_1$ and each $(j,\tau)$, let

\[\Psi_{j,\tau}f(x)=\frac{1}{(2\pi)^2}\iint_{\mathbb R^2\times\mathbb R^2} e^{i(x-y)\cdot\xi}{\psi_{j,\tau}(\Phi(x,\xi))}f(y)\,dyd\xi\]
where $\Phi(x,\xi)$ is a smooth function with bounded derivatives. For each $(x,\xi)$, we define $\Phi(x,\xi)$ to be the direction that satisfies {\begin{itemize} \item$|\Phi(x,\xi)|=|\xi|$, \item There exists a unique point $z=(z_1(x,\xi),0)$ on the $x_1$-axis, so that the geodesic connecting $z$ and $x$ has tangent vector ${\Phi(x,\xi)}/|\Phi(x,\xi)|$ at $z$ and $\xi/|\xi|$ at $x$.\end{itemize} Similarly, if we define
\[\Psi_{j,\pm}f(x)=\frac{1}{(2\pi)^2}\iint_{\mathbb R^2\times\mathbb R^2} e^{i(x-y)\cdot\xi}{\psi_{j,\pm}(\Phi(x,\xi))}f(y)dyd\xi\]
and
\[\Psi_{0}f(x)=\frac{1}{(2\pi)^2}\iint_{\mathbb R^2\times\mathbb R^2} e^{i(x-y)\cdot\xi}{\psi_{0}(\Phi(x,\xi))}f(y)dyd\xi,\]
then it is clear that $\Psi_0+\sum_{j\ge1}[\sum_{\tau}\Psi_{j,\tau}+\Psi_{j,-}+\Psi_{j,+}]$ is the identity operator. Denote $A_*(x,\xi)=\psi_*\circ\Phi(x,\xi)$ to be the symbol of the pseudodifferential operator $\Psi_*.$

Now, for each $T\in\mathbb T_{j,\tau}$, define the operator $M_T$ by
\[M_T f:=\eta_T\Psi_{j,\tau}f.\]
Similarly, define $M_0f=\eta_{T_0}\Psi_{0}$, and $M_{j,\pm}f=\eta_{T_0}\Psi_{j,\pm}$. 

\subsection{Good and Bad tubes.}
Denote by $4T$ the concentric tube of four times the radius. We call a tube $T\in\mathbb{T}_{j,\tau}$ bad if 
\[\mu_2(4T)\ge R_j^{-1/2+100\delta},\]
other wise, we say $T$ is good.

Define
\[\mu_{1,good}:=M_0\mu_1+\sum_{T\in\mathbb T, T good} M_T\mu_1.\]

In the Euclidean setting, it is proved in \cite{GIOW19} that the contribution from bad tubes is negligible. Our goal is to generalize this result to the Riemannian setting. We shall prove the following.
\begin{proposition}[Bad Tubes]\label{L1-bad} If $\alpha>1$, and if we choose $R_0$ large enough, then there is a subset $E_2'\subset E_2$ so that $\mu_2(E_2')\ge\frac{999}{1000}$ and for each $x\in E_2'$,
	\[\|d_*^x(\mu_1)-d_*^x(\mu_{1,good})\|_{L^1}<\frac1{1000}.\]
\end{proposition}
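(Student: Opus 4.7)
The plan is to split $\mu_1-\mu_{1,good}$ into the bad-tube part $\mu_{1,\mathrm{bad}}:=\sum_{j\ge 1}\sum_{T\text{ bad}}M_T\mu_1$ and the ``parallel-frequency'' part $\sum_{j\ge 1}(\Psi_{j,+}+\Psi_{j,-})\mu_1$ (modulo rapidly decaying errors from the cutoff $\eta_{T_0}$), to estimate each piece in the $L^1$-norm of its $d^x_*$-pushforward, and then to produce $E_2'$ by Chebyshev's inequality applied in the $x$-variable against $\mu_2$. The parallel piece is the easier one: the symbols $\psi_{j,\pm}$ live in the frequency sector $|\xi_2|/|\xi|\le 1/2$, essentially perpendicular to the axis separating $E_1$ and $E_2$. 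For $x\in E_2$ and $y\in E_1$ the vector $\nabla_y d_g(x,y)$ points close to $(0,1)$, so iterated integration by parts in $\xi$ inside the oscillatory integral representing $d^x_*(\Psi_{j,\pm}\mu_1)$ yields $\|d^x_*(\Psi_{j,\pm}\mu_1)\|_{L^1}\lesssim_N R_j^{-N}$ uniformly in $x\in E_2$; summing over $j\ge 1$ contributes $O(R_0^{-N})$, which is negligible once $R_0$ is large.

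For the bad-tube piece, I would first establish the pointwise bound
\[
\|d^x_*(M_T\mu_1)\|_{L^1}\ \lesssim\ \mu_1(CT)\,\mathbf{1}_{\{x\in CT\}}+O_N(R_j^{-N})
\]
for each bad $T\in\mathbb{T}_{j,\tau}$. The factor $\mu_1(CT)$ comes from $\|M_T\mu_1\|_{L^1}\lesssim\mu_1(CT)$, which follows from the uniform $L^1\to L^1$ boundedness of $\Psi_{j,\tau}$ together with the fact that its kernel is essentially supported in an $R_j^{-1}\times R_j^{-1/2}$ tube. The indicator $\mathbf{1}_{\{x\in CT\}}$ (up to a rapidly decaying error) is produced by non-stationary phase in the circle integral $\int_{d_g(x,y)=t}M_T\mu_1(y)\,d\sigma_t$: when $x\notin CT$, the circle $\{d_g(x,\cdot)=t\}$ crosses $T$ transversally to the wave-packet frequency direction $\tau$, so the $R_j$-scale oscillation of $M_T\mu_1$ defeats the arc length. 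Averaging over $x$ against $\mu_2$ and applying Fubini then reduces the problem to bounding the bilinear tube-incidence sum $\sum_{j\ge 1}\sum_{T\text{ bad}}\mu_1(CT)\mu_2(CT)$.

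The main obstacle is the bilinear incidence bound. The unweighted sum $\sum_T\mu_1(T)\mu_2(T)$ is comparable to $\iint d\mu_1\,d\mu_2\sim 1$ at each scale $j$, so restricting to bad tubes alone is not enough; moreover the Frostman bound $\mu_1(T)\lesssim R_j^{-(\alpha-1)/2+O(\delta)}$ yields only a tiny gain when $\alpha$ is just above $1$. This is precisely where the Riemannian analog of Orponen's radial projection lemma (proved in Section~\ref{Sec-radial-proj}) enters: since $\alpha>1$, for $\mu_2$-a.e.\ $x\in E_2$ the radial projection $\pi_x\mu_1$ lies in $L^2(S^1)$ with a uniform bound, giving $\mu_1(\text{angular cone of aperture }r\text{ at }x)\lesssim r^{1/2}\|\pi_x\mu_1\|_{L^2}$ rather than the Frostman $r^{\alpha-1}$. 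Combining this square-root gain with the counting bound $\#\{T\text{ bad at scale }j\}\le R_j^{1-100\delta}$, which is forced by $\mu_2(T)\ge R_j^{-1/2+100\delta}$ and $\mu_2(E_2)\le 1$, should produce a net factor $R_j^{-c\delta}$ per scale summable in $j$. Once $R_0$ is large the total is $\le 10^{-6}$, and Chebyshev's inequality in $x$ then extracts the desired $E_2'$ with $\mu_2(E_2')\ge\tfrac{999}{1000}$.
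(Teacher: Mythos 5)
Your reduction is set up correctly: the split into a ``parallel'' piece killed by non-stationary phase, the pointwise $L^1$ bound $\|d^x_*(M_T\mu_1)\|_{L^1}\lesssim\mu_1(CT)\mathbf{1}_{\{x\in CT\}}+\ra(R_j)$, the Fubini reduction to the bilinear incidence quantity $\mu_1\times\mu_2(\text{Bad}_j)\approx\sum_{T\text{ bad}}\mu_1(CT)\mu_2(CT)$, and the closing Chebyshev step all match the paper. The gap is in your treatment of the incidence bound.

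You propose to gain by projecting $\mu_1$ from $x\in E_2$, combining the bound $\mu_1(\text{cone of aperture }r)\lesssim r^{1-1/p}\|\pi^x_*\mu_1\|_{L^p}$ with the \emph{global} count $\#\{T\text{ bad}\}\le R_j^{1-100\delta}$. This does not close: with $\mu_1(CT)\lesssim R_j^{-(1/2-\delta)(1-1/p)}$ and $\mu_2(CT)\le 1$, the global count only gives
\[
\sum_{T\text{ bad}}\mu_1(CT)\mu_2(CT)\ \lesssim\ R_j^{1-100\delta}\cdot R_j^{-(1/2-\delta)(1-1/p)},
\]
which for $p$ close to $1$ (the regime you are forced into when $\alpha$ is close to $1$) is large, not $O(R_j^{-c\delta})$. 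More fundamentally, the badness hypothesis $\mu_2(2T)\ge R_j^{-1/2+100\delta}$ is a lower bound on \emph{$\mu_2$}-mass, so it cannot be leveraged against a projection of $\mu_1$. The paper instead fixes $y\in E_1$ and projects $\mu_2$ from $y$: each bad tube through $y$ projects to an arc $\text{Arc}(T)$ with $\pi^y_*\mu_2(\text{Arc}(T))\ge R_j^{-1/2+100\delta}$, so by Vitali there are at most $R_j^{1/2-100\delta}$ disjoint arcs covering $\pi^y(\text{Bad}_j(y))$, forcing $|\pi^y(\text{Bad}_j(y))|\lesssim R_j^{-99\delta}$; then H\"older with $\|\pi^y_*\mu_2\|_{L^p}$ gives $\mu_2(\text{Bad}_j(y))\lesssim R_j^{-99\delta(1-1/p)}\|\pi^y_*\mu_2\|_{L^p}$, and integrating over $y$ against $\mu_1$ produces the summable-in-$j$ gain. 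The crucial combination you are missing is the \emph{local} Vitali pigeonhole at a fixed source point together with the H\"older step in the angular variable, which replaces your global counting bound; and the measure you project must be $\mu_2$ (matching the badness condition), not $\mu_1$.
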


On the other hand, we shall prove the following estimate for good tubes.

\begin{proposition}[Good Tubes]\label{L2-Good} If $\alpha>\frac54$, then we have
$$\int ||d^x_*(\mu_{1, good})||_{L^2}^2\,d\mu_2(x)<\infty.$$
\end{proposition}
Now we are ready to prove Theorem \ref{maindistance} using Proposition \ref{L1-bad} and \ref{L2-Good}. The proof of Proposition \ref{L1-bad} and \ref{L2-Good} will be given in Section \ref{L1bad} and \ref{sectionproofmain} respectively. 
\begin{proof}[Proof of Theorem \ref{maindistance}] 

By Proposition \eqref{L2-Good} and  \ref{L1-bad}, there exists $R_0>0$, $x\in E_2$ such that
$$\|d_*^x(\mu_1)-d_*^x(\mu_{1,good})\|_{L^1}<\frac1{1000},$$$$||d^x_*(\mu_{1, good})||_{L^2}^2<\infty.$$

Since $d_*^x(\mu_1)$ is a probability measure on $\Delta_{g,x}(E)$,
\begin{equation}\begin{aligned}\label{enough-to-estimate-good}\int_{\Delta_{g,x}(E)} |d_*^x(\mu_{1, good})|= &\int |d_*^x(\mu_{1, good})|-\int_{\Delta_{g,x}(E)^c} |d_*^x(\mu_{1, good})|\\=&\int |d_*^x(\mu_{1, good})|-\int_{\Delta_{g,x}(E)^c} |d_*^x(\mu_1)-d_*^x(\mu_{1, good})|\\\geq &\int |d_*^x(\mu_1)|-2\int |d_*^x(\mu_1)-d_*^x(\mu_{1, good})|\\\geq &1-\frac{2}{1000}.\end{aligned} \end{equation}

On the other hand, by Cauchy-Schwarz
$$\left(\int_{\Delta_{g,x}(E)} |d_*^x(\mu_{1, good})|\right)^2\leq |\Delta_{g,x}(E)|\cdot ||d^x_*(\mu_{1, good})||_{L^2}^2\lesssim |\Delta_{g,x}(E)|.$$
Hence $|\Delta_{g,x}(E)|>0$ as desired.
\end{proof}

\section{$L^1$ Estimate for Bad Tubes.}
\label{L1bad}
We proceed by considering $d_*^x(M_T\mu_1)$ for each tube $T$. We shall prove the following variable coefficient analogues of Lemma 3.1 and  3.2 in \cite{GIOW19}.
\begin{lemma}\label{RapDec-wave-packet} If $T\in\T$, $\supp f\subset B_1,$ then
	\begin{equation}\|M_Tf\|_{L^1}\lesssim\|f\|_{L^1(2T)}+\ra(R_j)\|f\|_{L^1}\label{e}.
	\end{equation}For $x\in E_2$, and $x\not\in 2T$, we have
	\begin{equation}\|d_*^x(M_T\mu_1)\|_{L^1}\le \ra(R_j).\label{e1}\end{equation}
	Moreover, for any $x\in E_2$,
	\begin{equation}\label{e2}\|d_*^x(M_{j,\pm}\mu_1)\|_{L^1}\le \ra(R_j).\end{equation}
\end{lemma}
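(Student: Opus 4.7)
These three estimates are the Riemannian analogs of Lemmas 3.1 and 3.2 in \cite{GIOW19}; the strategy is to adapt the non-stationary phase arguments used there, replacing Euclidean direction and distance by geodesic direction and $d_g$, while exploiting the phase-space localization of $\Psi_{j,\tau}$ (resp.\ $\Psi_{j,\pm}$).

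For \eqref{e}, I would express the kernel of $M_T$ as
$$K_T(x,y) = \eta_T(x)\cdot\frac{1}{(2\pi)^2}\int e^{i(x-y)\cdot\xi}\,\psi_{j,\tau}(\Phi(x,\xi))\,d\xi.$$
Since $\xi \mapsto \psi_{j,\tau}(\Phi(x,\xi))$ is supported in a block of angular width $R_j^{-1/2}$ and radial extent $R_j$, its inverse Fourier transform is a wave packet concentrated in a region of size $R_j^{-1/2}\times R_j^{-1}$ centered at $0$. Hence $K_T(x,y)=\ra(R_j)$ once $|y-x|\gtrsim R_j^{-1/2+\delta/2}$ transverse to $\tau$. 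Combined with the physical cutoff $\eta_T(x)$ forcing $x\in T$, this yields $K_T(x,y)=\ra(R_j)$ for $y\notin 2T$. Then \eqref{e} follows by Fubini together with the standard bound $\sup_y\int|K_{j,\tau}(x,y)|\,dx\lesssim 1$.

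For \eqref{e1}, since $d_*^x(M_T\mu_1)$ is smooth and compactly supported in $r$, by Cauchy-Schwarz and Plancherel it suffices to show $\|J\|_{L^2(\R)}\lesssim\ra(R_j)$, where
$$J(\lambda) := \int e^{-i\lambda d_g(x,y)}\,M_T\mu_1(y)\,dy = \iiint e^{-i\lambda d_g(x,y)+i(y-z)\cdot\xi}\,\eta_T(y)\,\psi_{j,\tau}(\Phi(y,\xi))\,dy\,d\xi\,d\mu_1(z).$$
For each fixed $z$ I would integrate by parts in $\xi$ when $|y-z|\gtrsim R_j^{-1/2+\delta/2}$, and in $y$ otherwise. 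The $y$-phase gradient is $-\lambda\nabla_y d_g(x,y)+\xi$; the regimes $|\lambda|\gg R_j$ and $|\lambda|\ll R_j$ yield trivial lower bounds on this gradient. The critical regime $|\lambda|\sim R_j$ requires the key geometric input: for $y\in T$, $\xi/|\xi|$ lies within angle $R_j^{-1/2}$ of the tangent to the central geodesic of $T$ at $y$, while $\nabla_y d_g(x,y)$ is the tangent at $y$ of the geodesic through $x$. Since $x\notin 2T$, these two directions differ by angle $\gtrsim R_j^{-1/2+\delta}$, forcing $|\xi-\lambda\nabla_y d_g(x,y)|\gtrsim R_j^{1/2+\delta}$, and repeated IBP in $y$ yields the required decay.

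The proof of \eqref{e2} is essentially identical, with $\psi_{j,\pm}$ in place of $\psi_{j,\tau}$. Now $\psi_{j,\pm}$ is supported where $|\xi_1|/|\xi|\gtrsim 1$, while for $y\in\supp\mu_1\subset\{x_2\ge 1/3\}$ and $x\in E_2\subset\{x_2\le -1/3\}$, the direction $\nabla_y d_g(x,y)$ is close to $\pm e_2$, so the angular separation between $\xi/|\xi|$ and $\nabla_y d_g(x,y)$ is bounded below by an absolute constant and the non-stationary phase argument is strictly simpler. The main obstacle is the critical regime $|\lambda|\sim R_j$ in \eqref{e1}, where quantitative transversality between the tube direction and the geodesic from $x$ to $y\in T$ is essential; this is precisely where the choice of tube thickness $R_j^{-1/2+\delta}$ with $\delta>0$ plays its role. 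In the Riemannian setting, this transversality is verified via the geodesic variation formula, using that the coordinate patch is chosen so that $g$ is close to flat and all relevant geodesics lie in a bounded region, so the Euclidean geometric computation from \cite{GIOW19} perturbs cleanly.
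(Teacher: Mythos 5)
Your argument for \eqref{e} mirrors the paper's: express the kernel of $M_T$, note that for fixed first slot it is a Schwartz bump concentrated in a $R_j^{-1/2}\times R_j^{-1}$ box, and use $\supp\eta_T$ together with $T$ having the larger width $R_j^{-1/2+\delta}$; this matches. Your treatment of \eqref{e2} (replace $\eta_T$ by $\eta_{T_0}$ and use the $O(1)$ angular separation between $\supp\psi_{j,\pm}$ and the near-vertical direction $\nabla_y d_g(x,y)$ for $x\in E_2$, $y\in E_1$) also coincides with the paper's.

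For \eqref{e1} you take a genuinely different, though closely related, route. The paper bounds $d_*^x(M_T\mu_1)(t)$ pointwise in $t$: it writes $d_*^x(M_T\mu_1)(t)$ as an integral of $M_T\mu_1$ over the geodesic circle $S^1(x,t)$, parametrizes that circle by the angle $\omega$ about $x$, computes $\tfrac{d}{d\omega}(y\cdot\xi)\approx |\xi||y|\sin(\theta-\omega)$, and integrates by parts in $\omega$ using $|\theta-\omega|\gtrsim R_j^{-1/2+\delta}$ and $|\eta_T^{(k)}|\lesssim R_j^{k(1/2-\delta)}$. You instead pass to the Fourier side in $t$ via Cauchy--Schwarz and Plancherel, reducing to an $L^2(d\lambda)$ bound on $J(\lambda)=\int e^{-i\lambda d_g(x,y)}M_T\mu_1(y)\,dy$, then split into $|\lambda|\sim R_j$, $\gg R_j$, $\ll R_j$ and integrate by parts in $y$ in the critical regime. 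The geometric input is identical in both: $\xi/|\xi|$ lies within $O(R_j^{-1/2})$ of the tube direction while $\nabla_y d_g(x,y)$ is at angle $\gtrsim R_j^{-1/2+\delta}$ from it when $x\notin 2T$, $y\in T$, so the transverse phase gradient is $\gtrsim R_j^{1/2+\delta}$. The paper's version avoids the three-regime analysis in $\lambda$ and gives the pointwise decay directly, which is why it is a little cleaner; your version is correct as well, but note that the $y$-IBP also hits the symbol $\psi_{j,\tau}(\Phi(y,\xi))$, whose $y$-derivatives are $\lesssim R_j^{1/2}$ (since $\nabla\psi_{j,\tau}\lesssim R_j^{-1/2}$ angularly and $\nabla_y\Phi\lesssim |\xi|$), so each integration by parts nets only a gain of $R_j^{-\delta}$ rather than $R_j^{-2\delta}$; this still closes, but is worth recording explicitly.
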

\begin{proof} 
	Recall that
	\[M_T \mu_1(y)=\eta_T(y)\frac{1}{(2\pi)^2}\iint e^{i(y-z)\cdot\xi}{A_{j,\tau}(y,\xi)}d\mu_1(z)d\xi \]
	\[=\eta_T(y)\int {\widetilde A_{j,\tau}(y,y-z)}d\mu_1(z).\]
	Here $\widetilde A_{j,\tau}(y,{}\cdot{})$ is the inverse Fourier transform of $A_{j,\tau}$ in the second variable. By uncertainty principle, we note that for each given $y$, $\widetilde A_{j,\tau}(y,{}\cdot{})$ is a smooth Schwartz class function with essential support contained in a rectangle $\mathcal R_y$ centered at 0 of size $R_j^{-1/2}\times R_j^{-1}.$ If $\chi_{2\mathcal R_y}$ denotes the characteristic function of $2\mathcal R_y$, the above implies that $$|\widetilde A_{j,\tau}(y,y-z)|\lesssim R_j^\frac32\chi_{2\mathcal R_y}(y-z)+\ra(R_j)$$ uniformly in $z$. Now the left-hand side of \eqref{e} is
	\begin{align*}&\int\Big|\eta_T(y)\int \widetilde A_{j,\tau}(y,y-z)f(z)dz\Big|dy\\\lesssim &  R_j^\frac32 \int \eta_T(y)|f(z)|\int |\chi_{2R_z}(y-z)|dy\,dz+\ra(R_j)\\
	\lesssim &\int_{2T} |f(z)|dz+\ra(R_j), \end{align*}
	here we have used the fact that $T$ has width $R_j^{-1/2+\delta}\gg R_j^{-1/2}$. The above implies \eqref{e}.
	
	To prove \eqref{e1}, we write
	\[d_*^x(M_T\mu_1)(t)=\int_{S^1(x,t)}M_T\mu_1(y)dl(y),\]
	where $l(y)$ is the formal restriction of $\mu_2$ on the geodesic circle $S^1(x,t)=\{y:d_g(x,y)=t\}$. Thus, $d_*^x(M_T\mu_1)(t)$ is negligible unless $t\approx 1$ due to the separation of $E_1$ and $E_2$.
	Now since 
	\[M_T \mu_1=\eta_T(y)\frac{1}{(2\pi)^2}\iint e^{i(y-z)\cdot\xi}{A_{j,\tau}(z,\xi)}d\mu_1(z)d\xi,\]
	and $|\int d\mu_1|=1$, it suffices to show that, uniformly in $\xi$, we have 
	\[\int_{S^1(x,t)}{\eta_T(y)e^{iy\cdot\xi}}dl(y)\le \ra(R_j).\]
	Now let $y=|y|(\cos\omega,\sin\omega)$ parametrize the geodesic circle $S^1(x,t)$. Then $dl(y)\approx |y|d\omega.$ If we write $\xi=|\xi|(\cos\theta,\sin\theta),$ we can see that
	\[\frac{d}{d\omega}(y\cdot\xi)=|\xi||y|\frac{d}{d\omega}[\cos(\omega-\theta)]=|\xi||y|\sin(\theta-\omega).\]
	Since $x\not\in 2T, y\in T$, we have $|\theta-\omega|\gtrsim R_j^{-1/2+\delta},$ thus $$|\partial_\omega y(\omega)\cdot \xi|\gtrsim R_j^{1/2+\delta}.$$ Since \[|\partial^{(k)}_\omega\eta_T(y(\omega))|\lesssim R_j^{k(1/2-\delta)},\]
	integration by parts yields \eqref{e1}.
	A similar proof using $T_0$ in place of $T$ gives \eqref{e2}.
\end{proof}
A corollary of \eqref{e} is that 
\begin{equation}\label{e3}\|d_*^x(M_T\mu_1)\|_{L^1}\lesssim \mu_1(2T)+\ra(R_j).\end{equation}

\begin{lemma} For $f$ supported in $B_1$, we have
	\[\|f-M_0f-\sum_{T\in\mathbb T}M_Tf-\sum_\pm M_{j,\pm}f\|_{L^1}\lesssim \ra(R_0)\|f\|_{L^1}.\]
\end{lemma}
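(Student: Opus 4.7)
The plan is to combine the partitions of unity in frequency space and physical space, and then bound the resulting error as a sum of pseudolocal tails.

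First, since $\psi_0 + \sum_{j\geq 1}\bigl[\sum_\tau \psi_{j,\tau} + \sum_\pm \psi_{j,\pm}\bigr] \equiv 1$ on $\R^2$, Fourier inversion in $\xi$ yields the operator identity $\Psi_0 + \sum_{j,\tau} \Psi_{j,\tau} + \sum_{j,\pm} \Psi_{j,\pm} = I$. Subtracting $M_0 f + \sum_T M_T f + \sum_{j,\pm} M_{j,\pm} f$ from $f$ and inserting the physical-space partition $\sum_{T\in\T} \eta_T \equiv 1$ on the unit disk (which contains $\supp f$), I obtain
\begin{align*}
f - M_0 f - \sum_T M_T f - \sum_{j,\pm} M_{j,\pm} f &= (1-\eta_{T_0})\Psi_0 f \\
&\quad + \sum_{j,\tau} \Bigl(1 - \sum_{T\in\T} \eta_T\Bigr)\Psi_{j,\tau} f \\
&\quad + \sum_{j,\pm}(1 - \eta_{T_0})\Psi_{j,\pm} f.
\end{align*}
So the task reduces to bounding each of these three families of residuals in $L^1$.

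Next, I estimate each residual using the pseudolocality of the corresponding $\Psi_*$, in the spirit of the kernel analysis already performed in the proof of Lemma \ref{RapDec-wave-packet}. Writing
\[ \Psi_{j,\tau} f(x) = \int \widetilde A_{j,\tau}(x,\, y-x)\, f(y)\, dy \]
and integrating by parts in $\xi$, the kernel $\widetilde A_{j,\tau}(x,\cdot)$ is rapidly decaying away from a rectangle of size $R_j^{-1/2} \times R_j^{-1}$ centered at the origin. Since $1 - \sum_{T\in\T}\eta_T$ vanishes on the unit disk, the product $(1 - \sum_T \eta_T)\Psi_{j,\tau} f$ is nontrivial only when its argument sits at distance $\gtrsim 1 \gg R_j^{-1/2}$ from $\supp f$, so
\[ \Bigl\| \Bigl(1 - \sum_{T\in\T} \eta_T \Bigr) \Psi_{j,\tau} f\Bigr\|_{L^1} \lesssim_N R_j^{-N} \|f\|_{L^1}. \]
The same kernel analysis handles $\Psi_0$ (pseudolocal at scale $R_0^{-1}$) and $\Psi_{j,\pm}$ (pseudolocal at scale $R_j^{-1}$), producing analogous bounds $R_0^{-N}\|f\|_{L^1}$ and $R_j^{-N}\|f\|_{L^1}$. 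Summing over the $\lesssim R_j^{1/2}$ caps $\tau$ and then over $j\geq 1$ via $R_j = 2^j R_0$, the geometric series telescopes to $\ra(R_0)\|f\|_{L^1}$, which combined with the $R_0^{-N}\|f\|_{L^1}$ bound on the $\Psi_0$ term yields the claim.

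The main obstacle is keeping the pseudolocal rapid decay uniform in $\tau$ and $j$, so that the $R_j^{1/2}$-cap summation loss is absorbed by choosing the integration-by-parts order $N$ sufficiently large relative to the polynomial losses. A secondary technical point is verifying that $\supp(1-\eta_{T_0})$ and $\supp(1 - \sum_{T\in\T}\eta_T)$ are separated from $\supp f$ by a fixed positive distance, a consequence of the explicit geometric construction of $T_0$ and of the covering property of the tubes $\T$ on the unit disk.
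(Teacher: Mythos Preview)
Your proposal is correct and follows essentially the same route as the paper: decompose using the operator identity $\Psi_0+\sum_{j,\tau}\Psi_{j,\tau}+\sum_{j,\pm}\Psi_{j,\pm}=I$ together with the physical partitions of unity, then bound the residual by pseudolocality of the $\Psi_*$. The only organizational difference is that the paper first writes $f=\eta_{T_0}f$ (using $\eta_{T_0}\equiv1$ on $\supp f$), so that the $M_0$ and $M_{j,\pm}$ pieces cancel \emph{exactly} and only the term $\eta_{T_0}\bigl(1-\sum_{T\in\T}\eta_T\bigr)\Psi_{j,\tau}f$ remains; your decomposition keeps the additional residuals $(1-\eta_{T_0})\Psi_0 f$ and $(1-\eta_{T_0})\Psi_{j,\pm}f$, but these are dispatched by the same kernel argument, so the extra work is negligible.
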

\begin{proof}
	Note that $\eta_{T_0}\{\Psi_0+\sum_{j\ge1}[\sum_{\tau}\Psi_{j,\tau}+\Psi_{j,-}+\Psi_{j,+}]\}$ is equal to 1 on $T_0$, it suffices to bound 
	\[\|\eta_{T_0}\Psi_{j,\tau}f-\sum_{T\in\T }M_Tf\|_{L^1}\lesssim \ra(R_j)\|f\|_{L^1}.\]
	The left-hand side is 
	\[\|\eta_{T_0}(1-\sum_{T\in\T}\eta_T)\Psi_{j,\tau}f\|_{L^1},\]
	which has rapid decay in $R_j$.
\end{proof}

Now define
\begin{equation}\text{Bad}_j(x):=\bigcup_{T\in\mathbb T_j:\,x\in 2T, T \text{ is bad}}2T.\end{equation}
\begin{lemma}For any $x\in E_2$,
	\[\|d_*^x(\mu_{1,good})-d_*^x(\mu_1)\|_{L^1}\lesssim\sum_{j\ge1}\mu_1(\textnormal{Bad}_j(x))+{\ra}(R_0).\]
\end{lemma}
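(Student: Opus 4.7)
The plan is to reduce the claim to the previous lemma that decomposes $\mu_1$ into wave-packet pieces, and then use the pointwise-on-$x$ estimates \eqref{e1}, \eqref{e2}, \eqref{e3} from Lemma~\ref{RapDec-wave-packet} to bound each contribution.

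First I would use the identity from the unnamed lemma above, applied to $\mu_1$ (which is supported in $B_1$), to write
\[
\mu_1 \;=\; M_0\mu_1 \;+\; \sum_{j\ge 1}\Bigl[\sum_{T\in\mathbb T_j} M_T\mu_1 \;+\; M_{j,+}\mu_1 \;+\; M_{j,-}\mu_1\Bigr] \;+\; \text{error},
\]
where the error has $L^1$ norm $\mathrm{RapDec}(R_0)\|\mu_1\|_{L^1}=\mathrm{RapDec}(R_0)$. Subtracting $\mu_{1,\text{good}} = M_0\mu_1 + \sum_{T\text{ good}}M_T\mu_1$ cancels the zeroth piece and the good tubes, leaving
\[
\mu_1 - \mu_{1,\text{good}} \;=\; \sum_{j\ge 1}\Bigl[\sum_{T\in\mathbb T_j,\ T\text{ bad}} M_T\mu_1 \;+\; M_{j,+}\mu_1 \;+\; M_{j,-}\mu_1\Bigr] \;+\; \text{error}.
\]
Applying $d_*^x$ and the triangle inequality in $L^1$ then reduces the lemma to bounding each piece.

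Next I would dispose of the ``transverse'' pieces. The terms $\|d_*^x(M_{j,\pm}\mu_1)\|_{L^1}$ are each $\mathrm{RapDec}(R_j)$ by \eqref{e2}. For the bad-tube sum, I split according to whether $x\in 2T$ or not. For bad tubes with $x\notin 2T$, \eqref{e1} gives $\mathrm{RapDec}(R_j)$ per tube; since the total number of tubes at scale $R_j$ is polynomial in $R_j$, these contributions sum to $\mathrm{RapDec}(R_j)$, and summing over $j\ge 1$ gives $\mathrm{RapDec}(R_0)$, which is absorbed into the error term.

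The remaining case, bad tubes $T$ with $x\in 2T$, is where the bound in terms of $\mu_1(\mathrm{Bad}_j(x))$ must appear. Here I would invoke \eqref{e3} to get $\|d_*^x(M_T\mu_1)\|_{L^1}\lesssim \mu_1(2T)+\mathrm{RapDec}(R_j)$, and then sum: I need
\[
\sum_{\substack{T\in\mathbb T_j,\ T\text{ bad}\\ x\in 2T}} \mu_1(2T) \;\lesssim\; \mu_1(\mathrm{Bad}_j(x)).
\]
This is the key geometric step. The bound $\sum_T \chi_{2T}(y)\lesssim 1$ holds at any $y$ that is separated from $x$ by more than a tube-width, because once one fixes the direction of the tube (essentially the direction of the geodesic from $x$ to $y$), bounded overlap within each direction class $\mathbb T_{j,\tau}$ leaves only $O(1)$ tubes through both points. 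The only potential obstruction is the concentration of many directions near $x$ itself. However, since $x\in E_2$ has distance $\gtrsim 1$ from $\mathrm{supp}\,\mu_1\subset E_1$, and the tube half-width $R_j^{-1/2+\delta}$ is much smaller than $1$, the near-$x$ ball $B(x,R_j^{-1/2+\delta})$ carries no $\mu_1$-mass; so the bounded-overlap estimate applies on the entire support of $\mu_1$ and the claimed bound follows.

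The main obstacle is this last overlap step, which is where the geometric separation of $E_1$ and $E_2$ is genuinely used; the rest is bookkeeping built on Lemma~\ref{RapDec-wave-packet}. Putting these pieces together, summing over $j\ge 1$, and absorbing all $\mathrm{RapDec}$ losses into $\mathrm{RapDec}(R_0)$ (since $R_j=2^jR_0$), yields the stated inequality.
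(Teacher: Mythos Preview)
Your proposal is correct and follows essentially the same route as the paper: decompose $\mu_1-\mu_{1,\text{good}}$ via the wave-packet lemma, kill the $M_{j,\pm}$ pieces with \eqref{e2}, split the bad tubes according to whether $x\in 2T$, use \eqref{e1} and \eqref{e3} respectively, and then convert $\sum_{T}\mu_1(2T)$ into $\mu_1(\text{Bad}_j(x))$ by bounded overlap. The paper's proof is terser on the last step (it just says ``there are only finite overlaps between different $2T$'s''), whereas you correctly spell out that the bounded overlap holds on $\mathrm{supp}\,\mu_1$ because $x\in E_2$ is at distance $\gtrsim 1$ from $E_1$, which pins down the direction of any tube through both $x$ and $y$.
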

\begin{proof} If we use Lemma 6.4 and \eqref{e2}, we see that
	\[\|d_*^x(\mu_{1,good})-d_*^x(\mu_1)\|_{L^1}\lesssim\sum_{j\ge1}\sum_{T\in\mathbb T_j, T \text{ bad}}\|d_*^x(M_T\mu_1)\|_{L^1}+\ra(R_0).\]
	By invoking \eqref{e3} and \eqref{e1}, we have
	\[\|d_*^x(\mu_{1,good})-d_*^x(\mu_1)\|_{L^1}\lesssim\sum_{j\ge1}\sum_{T\in\mathbb T_j, x\in 2T, T \text{ bad}}\mu_1(2T)+\ra(R_0),\]
	since there are only finite overlaps between different $2T$'s, the right-hand side is bounded by
	\[\sum_j\mu_1(\text{Bad}_j(x))+\ra(R_0).\]\end{proof}
To estimate the measure of $\text{Bad}_j(x)$, we shall need a generalized version of Orponen's radial projection theorem that is adapted to our microlocal setup.

For a point $y\in E_1$, consider the generalized radial projection map defined on $x\in B_2$
\begin{equation}\pi^y(x)=\frac{\exp_y^{-1}x}{|\exp_y^{-1}x|}\in S^1.\end{equation}
Here $\exp_y^{-1}$ is the inverse of the exponential map centered at $y$, which gives the tangent vector of the geodesic connecting $x$ and $y$. We may assume directions are identified at different base points.

We need the following analog of Theorem 3.7 stated in \cite{GIOW19}. The proof is given in Section \ref{Sec-radial-proj}.
\begin{theorem}
	For every $\alpha>1$ there exists $p(\alpha)>1$ so that 
	\begin{equation}
	\int\|\pi^y_*\mu_2\|_{L^p}^pd\mu_1(y)<\infty.
	\end{equation}
\end{theorem}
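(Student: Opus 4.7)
The plan is to follow the pattern of the Euclidean Orponen-type proof (Theorem~3.7 of \cite{GIOW19}), adapted to the Riemannian setting. The strategy has two steps: a geometric reduction that replaces the Riemannian radial projection by the Euclidean one in normal coordinates, followed by running the Orponen argument.

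For the geometric reduction, fix $y\in E_1$ and pass to geodesic normal coordinates at $y$. In these coordinates the exponential map is the identity, so $\pi^y(x)$ is literally the Euclidean radial projection $v\mapsto v/|v|$ applied to $v=\exp_y^{-1}(x)$. Let $\tilde\mu_2^y := (\exp_y^{-1})_*\mu_2$. Since $\exp_y$ is a smooth diffeomorphism with derivatives bounded uniformly in $y\in E_1$ (by smoothness of $g$, compactness, and the fact that after rescaling everything sits well inside the injectivity radius), the Frostman bound on $\mu_2$ transfers to $\tilde\mu_2^y(B(v,r))\lesssim r^\alpha$ with constants uniform in $y$, and $\pi^y_*\mu_2 = (\pi_E)_*\tilde\mu_2^y$ where $\pi_E(v):=v/|v|$. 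Identifying the tangent unit circles at different basepoints via parallel transport (as set up in Section~\ref{wave-packet-decomp}), all these pushforwards live on a common $S^1$, and $\|\pi^y_*\mu_2\|_{L^p(S^1)} = \|(\pi_E)_*\tilde\mu_2^y\|_{L^p(S^1)}$ with constants uniform in $y$.

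To close, I would run the integrated Orponen argument of \cite{GIOW19}, with $\mu_1$ as the integration measure and $\mu_2$ as the projected measure; the two-measure version follows from the single-measure Theorem~3.7 applied to $\tfrac12(\mu_1+\mu_2)$ (still $\alpha$-Frostman), since $\|\pi^y_*(\mu_1+\mu_2)\|_{L^p}^p \ge \|\pi^y_*\mu_2\|_{L^p}^p$ by positivity of the measures. The core of Orponen's argument is a triple-integral estimate against a singular kernel built from powers of $|\pi^y(x_1)-\pi^y(x_2)|^{-1}$, and using the standard lower bound $|\pi^y(x_1)-\pi^y(x_2)| \gtrsim d_g(x_1,x_2)/\max(d_g(y,x_1),d_g(y,x_2)) \approx d_g(x_1,x_2)$ (the last $\approx$ following from $d_g(y,x_i)\approx 1$ on our separated supports), the integration against $d\mu_1(y)$ trivializes and what remains is a negative-power energy of $\mu_2$, finite for a suitable $p>1$ by the Frostman condition $\alpha>1$.

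The main obstacle I anticipate is faithfully implementing the Euclidean Orponen argument itself: the underlying proof uses a Whitney-type maximal-function decomposition on $S^1$ and delicate incidence-style estimates for radial tubes rather than a naive energy bound, and the resulting $L^p$-integrability holds only for a small exponent $p(\alpha)>1$ that depends intricately on $\alpha$. Once that Euclidean proof is in hand, the Riemannian adaptation is largely bookkeeping: all constants remain uniform in $y\in E_1$ by compactness and smoothness of $g$, and the Frostman conditions transfer cleanly under the normal-coordinate pullback.
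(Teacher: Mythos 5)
The central difficulty of the Riemannian generalization is exactly the point your proposal glosses over: the normal-coordinate chart in which $\pi^y$ becomes the Euclidean radial projection is centered at $y$, so as $y$ ranges over $E_1$ the measure being projected, $\tilde\mu_2^y = (\exp_y^{-1})_*\mu_2$, changes with $y$. The Euclidean Theorem~3.7 of \cite{GIOW19} is an estimate for $\int\|\pi^y_*\mu\|_{L^p}^p\,d\nu(y)$ with $\mu$ and $\nu$ \emph{fixed}; your reduction produces a different projected measure for each $y$, so the Euclidean result cannot be invoked as a black box, and there is no single flat chart in which $d_g$ is Euclidean unless $M$ is flat. Your sketch of ``Orponen's core'' also rests on a false inequality: the lower bound $|\pi^y(x_1)-\pi^y(x_2)|\gtrsim d_g(x_1,x_2)$ fails whenever $x_1,x_2$ lie on a common geodesic through $y$ (then the left side is $0$ while the right side is not). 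There is no such pointwise lower bound, and the $d\mu_1(y)$-integration is essential to Orponen's argument, not something that ``trivializes.'' You flag this honestly in your last paragraph, but it is the whole content of the theorem and cannot be deferred.

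What the paper actually does is quite different: it reformulates the problem in terms of Peres--Schlag generalized projections. Section~\ref{Sec-radial-proj} introduces a family $\pi_\theta$ of ``orthogonal'' projections adapted to the geodesic flow, defined via the implicit relation $\exp_{(u,0)}t\theta = x$, and verifies the Peres--Schlag transversality condition $|\partial_\theta\nabla_x\pi_\theta|\approx 1$ from the curvature/Carleson--Sj\"olin hypothesis. This transversality imports the Kaufman-type $L^2$-Sobolev estimate
$\int_{S^1}|\widehat{(\pi_\theta)_*\mu}(\xi)|^2\,d\theta \lesssim I_\alpha(\mu)(1+|\xi|)^{-\alpha}$
for the generalized family, which replaces the Euclidean Fourier-analytic input in Orponen's proof. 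The paper then establishes a change of variables
$$\int\|\pi^y_*\mu\|_{L^p}^p\,d\nu(y)\approx\iint\big|(\pi_\theta)_*\mu(u)\big|^p\,(\pi_\theta)_*\nu(u)\,du\,d\mathcal H^1(\theta)$$
and runs Orponen's duality argument verbatim against the generalized projections. The missing ingredient in your proposal is precisely this transversality-based $L^2$ bound for the generalized projection family; once one has it, everything else in Orponen's argument transfers, but neither the normal-coordinate freezing nor the $\frac{1}{2}(\mu_1+\mu_2)$ trick by themselves supply it.
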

As in \cite{GIOW19}, if we denote 
\[\text{Bad}_j:=\{(y,z): \text{there is a bad } T\in\mathbb T_j \text{ so that $2T$ contains $y$ and $z$}\},\]
then we show that the above analog of Orponen's projection theorem implies the following.

\begin{lemma}\label{Badj}
	For each $\alpha>1$, there is a constant $c(\alpha)>0$ so that for each $j\ge1$,
	\[\mu_1\times\mu_2(\text{Bad}_j)\lesssim R_j^{-c(\alpha)\delta}\]
\end{lemma}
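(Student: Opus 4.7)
The strategy is the Riemannian adaptation of the pruning argument in \cite{GIOW19}: convert the event $(y,z)\in\mathrm{Bad}_j$ into an angular condition on the radial projection $\pi^y(z)$, and then combine the $L^p$ maximal inequality on $S^1$ with the microlocal Orponen-type bound $\int\|\pi^y_*\mu_2\|_{L^p}^p\,d\mu_1(y)<\infty$ stated just above.

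The first step fixes $y\in E_1$ and translates the defining condition of $\mathrm{Bad}_j$ into a condition on the direction $\pi^y(z)$. If $y,z\in 2T$ for a bad tube $T\in\T$, then both $y$ and $z$ sit within transversal distance $\lesssim R_j^{-1/2+\delta}$ of the central geodesic $\gamma_T$, while $\mathrm{dist}(y,z)\gtrsim 1$ since $y\in E_1$ and $z\in E_2$. A direct geometric computation in the geodesic normal coordinates of Section~\ref{wave-packet-decomp}, using that $g$ is close to the flat metric on the coordinate patch, will show that the direction of the geodesic from $y$ to $z$ lies in an arc $\tau'\subset S^1$ of length $O(R_j^{-1/2+\delta})$ centered at the direction associated to $\tau$. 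Since $T$ is bad,
\[
\pi^y_*\mu_2(\tau')\ge \mu_2(T)\ge R_j^{-1/2+100\delta},
\]
so the average of the density of $\pi^y_*\mu_2$ over $\tau'$ is $\gtrsim R_j^{99\delta}$.

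Next, let $B_y\subset S^1$ be the union of all such arcs $\tau'$ arising from bad tubes containing $y$. Then $\pi^y(z)\in B_y$ whenever $(y,z)\in\mathrm{Bad}_j$, so
\[
\mu_2\bigl(\{z:(y,z)\in\mathrm{Bad}_j\}\bigr)\le \pi^y_*\mu_2(B_y).
\]
By construction $B_y$ is contained in the super-level set $\{M\pi^y_*\mu_2\gtrsim R_j^{99\delta}\}$ of the Hardy--Littlewood maximal operator on the circle; the $L^p$ maximal inequality gives $|B_y|\lesssim R_j^{-99\delta p}\|\pi^y_*\mu_2\|_{L^p}^p$, and H\"older's inequality then yields
\[
\pi^y_*\mu_2(B_y)\le \|\pi^y_*\mu_2\|_{L^p}\,|B_y|^{1-1/p}\lesssim R_j^{-99\delta(p-1)}\|\pi^y_*\mu_2\|_{L^p}^p.
\]
Integrating against $\mu_1$ and invoking the projection theorem then produces $\mu_1\times\mu_2(\mathrm{Bad}_j)\lesssim R_j^{-99\delta(p-1)}$, so one may take $c(\alpha)=99(p(\alpha)-1)>0$.

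The main obstacle I anticipate is the geometric reduction step: verifying in the geodesic normal coordinates (with directions at different base points identified via parallel transport along the $x_1$-axis) that the $O(R_j^{-1/2+\delta})$ transversal closeness of $y,z$ to $\gamma_T$ really does translate into angular closeness of the same order for $\pi^y(z)$, with constants uniform in $j$. This should follow from the metric being close to Euclidean on the coordinate patch together with standard Jacobi-field/exponential-map comparisons, but it is the one place where the Riemannian setting genuinely differs from the Euclidean argument. Once the angular localization is in place, the maximal-function and H\"older steps are classical and insensitive to the geometry.
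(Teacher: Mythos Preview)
Your proposal is correct and follows essentially the same approach as the paper: convert membership in $\mathrm{Bad}_j$ to an angular condition on $\pi^y(z)$, use that each bad arc carries $\pi^y_*\mu_2$-mass at least $R_j^{-1/2+100\delta}$, and finish with H\"older and the $L^p$ bound $\int\|\pi^y_*\mu_2\|_{L^p}^p\,d\mu_1<\infty$. The only cosmetic difference is that the paper bounds $|\pi^y(\mathrm{Bad}_j(y))|\lesssim R_j^{-99\delta}$ via a Vitali covering (the total $\pi^y_*\mu_2$-mass is $1$, so there are at most $\sim R_j^{1/2-100\delta}$ disjoint bad arcs of length $\sim R_j^{-1/2+\delta}$) rather than through the maximal function, giving $c(\alpha)=99(1-1/p)$ instead of your $99(p-1)$; either route works and the geometric localization $\pi^y(2T)\subset Arc(T)$ is asserted in the paper with the same brevity you anticipate.
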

\begin{proof}
	Note that 
	\[\mu_1\times\mu_2(\text{Bad}_j)=\int\mu_2(\text{Bad}_j(y))d{\mu_1(y)}.\]
	Suppose that $T\in\mathbb T_j$ is a bad rectangle and $y\in 2T$. Let $Arc(T)$ be the arc of $S^1$ that correspond to the direction of $T$ with length about $R_j^{-1/2+\delta}$. Note that if we take two points $z,\ z'$ on the central geodesic of the tube then $\pi^z(z')\in S^1$ gives the center of the arc. It then follows that $\pi^y(4T)\subset Arc(T)$, and thus
	\[(\pi^y\mu_2)(Arc(T))\ge\mu_2(4T)\ge R_j^{-1/2+100\delta}.\]
	
	Therefore $\pi^y(\text{Bad}_j(y))$ can be covered by arcs $Arc(T)$ of length about $R_j^{-1/2+\delta}$, satisfying the above estimates. By the Vitali covering lemma, we can choose a disjoint subset of the arcs $Arc(T)$ so that  $5Arc(T)$ covers $\pi^y(\text{Bad}_j(y))$, thus we have
	\[|\pi^y(\text{Bad}_j(y))|\lesssim R_j^{-99\delta}.\]
	Now 	\begin{align*}\mu_1\times\mu_2(\text{Bad}_j)&=\int\mu_2(\text{Bad}_j(y))d{\mu_1(y)}\le\int\left(\int_{\pi^y(\text {Bad}_j(y))}\pi^y\mu_2\right)d\mu_1(y)\\
	&\le |\pi^y(\text{Bad}_j(y))|^{1-1/p}\int\|\pi^y\mu_2\|_{L^p}d\mu_1\lesssim R_j^{-c(\alpha)\delta}.
	\end{align*}
\end{proof}
Now we can use Lemma \ref{Badj} to prove Proposition \ref{L1-bad}, the proof is identical to that of \cite{GIOW19}.
\begin{proof}[Proof of Proposition \ref{L1-bad}]
	Recall 	\[\mu_1\times\mu_2(\text{Bad}_j)=\int\mu_2(\text{Bad}_j(x))\,d{\mu_1(x)}.\]
	For each $j\ge1$, we can choose $S_j\subset E_2$ so that $\mu_2(S_j)\le R_j^{-(1/2)c(\alpha)\delta}$, and for all $x\in E_2\setminus S_j$,
	\[\mu_1(\text{Bad}_j(x))\lesssim R_j^{-(1/2)c(\alpha)\delta}.\] 
	Let $E_2'=E_2\setminus\cup_{j\ge1}S_j$, thus $\mu_2(E_2')\ge 1-1/1000$ if $R_0$ is sufficiently large. Then for $x\in E_2'$, we have
	\[\|d_*^x(\mu_1)-d_*^x(\mu_{1,good})\|_{L^1}\lesssim \sum_{j\ge1}\mu_1(\text{Bad}_j(x))+\ra(R_0)\lesssim R_0^{-(1/2)c(\alpha)\delta},\]
	which can be arbitrarily small if we choose $R_0$ to be large enough.
\end{proof}

\section{$L^2$ Estimate for Good Tubes}
\label{sectionproofmain} 
In this section, we prove Proposition \ref{L2-Good}.
We consider the $L^2$ quantity
$$\int ||d^x_*(\mu_{1, good})||_{L^2}^2\,d\mu_2(x).$$
Since $\mu_{1, good}\in C_0^\infty(B_1(0))$, one can see that
$$d^x_*(\mu_{1, good})(t)\approx \int_{ d_g(x,y)=t} \mu_{1, good}(y)\,dy = \iint e^{-2\pi i ( d_g(x,y)-t)\tau}\,d\tau \mu_{1, good}(y)\,dy.$$

By Plancherel,
$$||d^x_*(\mu_{1, good})||_{L^2}^2= \int\left|\int e^{-2\pi i \lambda  d_g(x,y)}\, \mu_{1, good}(y)\,dy\right|^2\,d\lambda.$$

Denote $R_j=2^j R_0$. Then it suffices to show that for each $j$,
$$\Big|\int\int_{\lambda\approx R_j}\left|\int e^{-2\pi i \lambda  d_g(x,y)}\, \mu_{1, good}(y)\,dy\right|^2\,d\lambda\,d\mu_2(x)\Big|\lesssim 2^{-j\varepsilon}$$
for some $\varepsilon>0$.

Recall that
\[\mu_{1,good}:=M_0\mu_1+\sum_{T\in\mathbb T, T\,good} M_T\mu_1.\]

Since $\lambda\approx R_j$, by standard integration by parts argument it is equivalent to consider
\begin{equation}\label{lambda=R}\int\int_{\lambda\approx R_j}\left|\int e^{-2\pi i \lambda  d_g(x,y)}\, \mu^j_{1, good}(y)\,dy\right|^2\,d\lambda\,d\mu_2(x),\end{equation}
where
\[\mu^j_{1,good}:=\sum_{T\in\mathbb T_j, T\,good} M_T\mu_1.\]

Denote
$$F^\lambda(x)=\int e^{-2\pi i \lambda  d_g(x,y)}\, \mu^j_{1, good}(y)\,dy,$$
$$F^\lambda_T(x)=\int e^{-2\pi i \lambda  d_g(x,y)}\, M_T\mu_1(y)\,dy.$$
Then $F^\lambda=\sum_{T\in\mathbb T_j, T\,good} F^\lambda_T$. Also denote $R=\lambda^{1-2\delta}\approx R_j^{1-2\delta}$. Then each $T$ is a $R^{-1/2}\times 1$ geodesic tube.

We claim that this decomposition coincides with the wave-packet decomposition in Section \ref{wave-packet-decomp}.  If we recall the definition of $M_T$ and integrate by parts, we see that $F^\lambda_T$ can be replaced by $\chi_{2T}\cdot F^\lambda_T$ at the expense of a $\ra(R_j)$ term, so we may assume that $F^\lambda$ is supported in the unit ball $B_1(0)$. To check the microlocal support, notice
$$F^\lambda_T(x)=\int\int_{2T} e^{-2\pi i (z-x)\cdot\xi } \int_{2T} e^{-2\pi i \lambda d_g(z,y)}\, M_T\mu_1(y)\,dy\,dz\,d\xi.$$
By integration by parts in $z$, one can see that $\xi$ essentially lies in the $R^{2\delta}$-neighborhood of
$$\{\lambda\nabla_z d_g(z,y): y\in 2 T\}.$$
Since $z$ lies in $2T$ as well, both $y,z$ lie in a $R^{-1/2}$-neighborhood of the central geodesic of $T$, thus for each fixed $z$, $\nabla_z d_g(z,y)$ must lie in a $R^{-1/2}$-cap in $S^1$ determined by $T$, as desired.

By dyadic pigeonholing, it is enough to consider tubes $T\in \mathbb{W}_\beta$ where $||F_T||_{L^6}\sim \beta$. Denote $W=\#(\mathbb{W}_\beta)$.

We fix $\lambda$ in \eqref{lambda=R} and integrate over $\mu_2(x)$ first. Let $\psi_{R_j}\in C_0^\infty$ which is nonnegative, equal to $1$ on $B_{10R_j}(0)$ and $0$ outside $B_{20R_j}(0)$. Since we are assuming that $F^\lambda$ is supported on the unit ball, and $\widehat{F^\lambda}(\xi)=\ra(|\xi|)$ outside $B_{10R_j}(0)$. It follows that
$$F^\lambda = \left(\widehat{F^\lambda}\cdot\psi_{R_j}\right)^{\vee}+\ra(R_j)=F^\lambda*\widehat{\psi_{R_j}}+\ra(R_j),$$
and therefore
\begin{equation}\begin{aligned}\int |F^\lambda(x)|^2\,d\mu_2(x)&= \int \left|F^\lambda*\widehat{\psi_{R_j}}(x)\right|^2 \,d\mu_2(x)+\ra(R_j)\\&\lesssim \int |F^\lambda|^2*|\widehat{\psi_{R_j}}|(x) \,d\mu_2(x)+\ra(R_j)\\&=\int_{B_1(0)} |F^\lambda(x)|^2\,|\widehat{\psi_{R_j}}|*\mu_2(x)\,dx+\ra(R_j).\end{aligned}\end{equation}
Denote $\mu_{2, R_j}=|\widehat{\psi_{R_j}}|*\mu_2$. Notice although the support of $\mu_{2, R_j}$ is not compact, we can still work on $B_1(0)$ as $F^\lambda$ has compact support.

Decompose $B_1(0)$ into $R^{-1/2}$-squares $Q$. By dyadic pigeonholing again it is enough to consider
$$\mathcal{Q}_{\gamma, M}=\{Q: \mu_{2, R_j}(Q)\sim \gamma, Q\text{ intersects }\sim M\text{ tubes }T\in\mathbb{W}\}.$$
Denote
$$Y_{\gamma, M} = \bigcup_{Q\in \mathcal{Q}_{\gamma, M}} Q.$$

By H\"older's inequality,
\begin{equation}\label{Holder-to-apply-decoupling}\int_{Y_{\gamma, M}} |F^\lambda(x)|^2\,\mu_{2, R_j}(x)\,dx\leq \left(\int_{Y_{\gamma, M}} |F^\lambda(x)|^6 \,dx\right)^{1/3}\left(\int_{Y_{\gamma, M}}\mu_{2, R_j}(x)^{3/2}\,dx\right)^{2/3}.\end{equation}

The first factor can be estimated by the refined microlocal decoupling inequality \eqref{refined-decoupling-theorem}:
\begin{equation}\label{apply-refined-decoupling}||F^\lambda||_{L^6(Y_{\gamma, M})}\leq C_\epsilon R^{\epsilon} \left(\frac{M}{W}\right)^{\frac13}\Big(\sum_{T\in\mathbb{W}_\beta} \|F^\lambda_T\|^2_{L^6}\Big)^\frac12.\end{equation}

For the second factor, by the ball condition on $\mu_2$ and the rapid decay of $|\widehat{\psi_{R_j}}|$ outside $B_{R_j^{-1}}$, we have
\begin{equation}\label{stupid-upper-bound}\mu_{2, R_j}(x)=\int |\widehat{\psi_R}(x-y)|\,d\mu_2(y)\lesssim R_j^2\cdot\mu_2(B_{R_j})+\ra(R_j)\lesssim R_j^{2-\alpha}.\end{equation}
Therefore
\begin{equation}\label{infty_norm_mu_2}\int_{Y_{\gamma, M}}\mu_{2, R_j}(x)^{3/2}\,dx\lesssim R_j^{\frac{2-\alpha}{2}}\cdot \mu_{2, R_j}(Y_{\gamma, M}).\end{equation}
\begin{lemma}\label{doubling-argument}For any $\gamma$, $M$,
	$$\mu_{2, R_j}(Y_{\gamma, M})\lesssim \frac{W \cdot R_j^{-1/2+100\delta}}{M}.$$
\end{lemma}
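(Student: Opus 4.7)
\textbf{Proof plan for Lemma \ref{doubling-argument}.} The plan is a straightforward double-counting of incidences between the $R^{-1/2}$-squares $Q\in\mathcal{Q}_{\gamma,M}$ and the good tubes $T\in\mathbb{W}_\beta$, exploiting the definition of ``good'' to control $\mu_2(T)$ from above and the definition of $\mathcal{Q}_{\gamma,M}$ to control the incidence count per square from below.

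First, by the definition of $\mathcal{Q}_{\gamma,M}$, each square $Q\in\mathcal{Q}_{\gamma,M}$ has $\mu_2(Q)\sim\gamma$, and distinct squares are essentially disjoint; hence
\[
\mu_2(Y_{\gamma,M}) \sim \gamma\cdot \#\mathcal{Q}_{\gamma,M}.
\]
So it suffices to bound $\#\mathcal{Q}_{\gamma,M}$.

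Next I would count the incidence set
\[
\mathcal{I}=\{(Q,T) : Q\in\mathcal{Q}_{\gamma,M},\ T\in\mathbb{W}_\beta,\ Q\cap 2T\neq\emptyset\}
\]
in two ways. Summing first over $Q$, each $Q\in\mathcal{Q}_{\gamma,M}$ meets $\sim M$ tubes in $\mathbb{W}_\beta$, so $\#\mathcal{I}\sim M\cdot\#\mathcal{Q}_{\gamma,M}$. Summing first over $T$, for each fixed $T\in\mathbb{W}_\beta$ the squares $Q\in\mathcal{Q}_{\gamma,M}$ with $Q\cap 2T\neq\emptyset$ lie inside a slight fattening of $2T$; since they are essentially disjoint and each carries $\mu_2$-mass $\sim\gamma$, the number of such squares is bounded by $\mu_2(2T)/\gamma$ (plus an acceptable boundary correction). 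Since every $T\in\mathbb{W}_\beta$ is good, we have $\mu_2(2T)\lesssim R_j^{-1/2+100\delta}$ (the factor $2$ is absorbed by adjusting constants). Therefore
\[
\#\mathcal{I}\lesssim W\cdot \frac{R_j^{-1/2+100\delta}}{\gamma}.
\]
Combining the two estimates gives $\#\mathcal{Q}_{\gamma,M}\lesssim \dfrac{W\cdot R_j^{-1/2+100\delta}}{M\gamma}$, and multiplying by $\gamma$ yields the stated bound on $\mu_2(Y_{\gamma,M})$.

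The only mild technical point is justifying that counting $R^{-1/2}$-squares inside the fattened tube $2T$ against $\mu_2(2T)/\gamma$ is legitimate. This follows because the squares in $\mathcal{Q}_{\gamma,M}$ are taken from a fixed essentially disjoint grid, and $T$ has width $R_j^{-1/2+\delta}$, so each $T$ is at most a bounded-multiple thickening of a union of $\sim R_j^{1/2}\cdot R_j^{\delta}$ grid squares along its length—each contributing mass $\sim\gamma$ if counted. This is really the only place any geometry enters; no decoupling or curvature is needed for this lemma, and the ``goodness'' definition was arranged precisely so this double count closes with the $R_j^{-1/2+100\delta}$ factor visible on the right-hand side.
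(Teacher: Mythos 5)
Your proposal is correct and follows essentially the same double-counting argument as the paper: bound the incidences $\#\mathcal{I}$ from above using $\mu_2(2T)\lesssim R_j^{-1/2+100\delta}$ for good tubes, and from below using the fact that each $Q\in\mathcal{Q}_{\gamma,M}$ meets $\sim M$ tubes, then divide. The short justification you add for counting $R^{-1/2}$-squares inside $2T$ against $\mu_2(2T)/\gamma$ is a sensible (though implicit in the paper) remark, but the substance of the argument is identical.
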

The proof is the same as that of \cite[Lemma 5.4]{GIOW19}. We give the proof for the sake of completeness.
\begin{proof}[Proof of Lemma \ref{doubling-argument}]
	This is a double counting argument. Consider
	$$I=\{(Q, T)\in\mathcal{Q}_{\gamma, M}\times \mathbb{W}_\beta : Q\text{ intersects } T\}.$$
	Since each tube $T\in\mathbb{W}_\beta$ is good, we have $$\mu_{2, R_j}(2T)\lesssim \mu_2(4T)\lesssim R_j^{-1/2+100\delta}.$$ Therefore each $T$ intersect $\lesssim \gamma^{-1}\cdot R_j^{-1/2+100\delta}$ many cubes $Q\in\mathcal{Q}_{\gamma, M}$. It implies
	$$\#(I)\lesssim \gamma^{-1}\cdot R_j^{-1/2+100\delta}\cdot W.$$

	On the other hand, each $Q\in \mathcal{Q}_{\gamma, M}$ intersects $M$ tubes $T$. Therefore 
	$$\#(I)\geq\#(\mathcal{Q}_{\gamma, M})\cdot M. $$
	
	Comparing these bounds for $I$, one gets 
	$$\#(\mathcal{Q}_{\gamma, M})\cdot\gamma\lesssim \frac{W \cdot R_j^{-1/2+100\delta}}{M}.$$
	
	Hence the lemma follows since $\mu_2(Q)\sim \gamma$ for each $Q\in\mathcal{Q}_{\gamma, M}$.

\end{proof}

Put \eqref{Holder-to-apply-decoupling}, \eqref{apply-refined-decoupling}, \eqref{infty_norm_mu_2} and Lemma \ref{doubling-argument} together, it follows that
\begin{equation}\label{almost-there}\begin{aligned}\int_{\lambda\approx R_j}\int |F^\lambda(x)|^2\,d\mu_2(x)\,d\lambda\lesssim_\delta &R_j^{O(\delta)+\frac{1-\alpha}{3}}\sum_{T\in \mathbb{T}_j} \int_{\lambda\approx R_j}\|F^\lambda_T\|^2_{L^6}\,d\lambda\\\lesssim_\delta & R_j^{O(\delta)+\frac{1-\alpha}{3}}\cdot |T|^{1/3} \sum_{T\in \mathbb{T}_j}\int_{\lambda\approx R_j}||F^\lambda_T||^2_{L^\infty}\,d\lambda,\end{aligned}\end{equation}
where the last inequality follows since $F^\lambda_T$ is essentially supported on $2T$.

Notice
$$F^\lambda_T(x)= \int \left(\int_{2T} e^{-2\pi i (\lambda  d_g(x,y)-y\cdot\xi)}\,dy\right) \widehat{M_T\mu_1}(\xi)\,d\xi.$$
This implies that $\xi$ essentially lies in the $\lambda/R\approx R_j^{2\delta}$-neighborhood of $\lambda\tau$, where $\tau\subset S_y$ is a $R^{-1/2}$-cap, thus
$$\int_{\lambda\approx R_j}||F^\lambda_T||^2_{L^\infty}\,d\lambda\lesssim \int_{\lambda\approx R_j} \left(\int_{2T}\int |\widehat{M_T\mu_1}(\xi)|\,\psi^\tau_{d^\lambda, R}(\lambda y,\xi/\lambda)\,d\xi\,dy \right)^2d\lambda,$$
where $\psi_{d^\lambda, R}^\tau$ is defined in \eqref{partition}. Then by Cauchy-Schwarz it is bounded from above by
$$\int_{\lambda\approx R_j} \left(\int_{2T}\int |\widehat{M_T\mu_1}(\xi)|^2\,\psi^\tau_{d^\lambda, R}(\lambda y,\xi/\lambda)\,d\xi\,dy \int_{2T}\int \psi^\tau_{d^\lambda, R}(\lambda y,\xi/\lambda)\,d\xi\,dy \right)\,d\lambda.$$

For each fixed $y\in 2T$,
$$\int \psi^\tau_{d^\lambda, R}(\lambda y,\xi/\lambda)\,d\xi\lesssim R_j^{2\delta}\cdot \lambda\cdot R^{-1/2},$$
thus
$$\int_{2T}\int \psi^\tau_{d^\lambda, R}(\lambda y,\xi/\lambda)\,d\xi\,dy\lesssim R_j^{2\delta}\cdot \lambda\cdot R^{-1/2}\cdot |T|\lesssim R_j^{2\delta}\cdot \lambda/R\lesssim R_j^{O(\delta)}.$$

Also notice that $|\widehat{M_T\mu_1}(\xi)|$ is independent in $\lambda$, therefore 
$$\int_{\lambda\approx R_j}||F^\lambda_T||^2_{L^\infty}\,d\lambda\lesssim R_j^{O(\delta)}  \int |\widehat{M_T\mu_1}(\xi)|^2\,\left(\int_T\int_{\lambda\approx R_j}\psi^\tau_{d, \lambda}(\lambda y,\xi/\lambda)\,d\lambda\,dy\right)\,d\xi.$$

By definition of $\psi^\tau_{d^\lambda, R}$ in \eqref{partition} one can see that uniformly in $y\in 2T$, 
$$\int_{\lambda\approx R_j}\psi^\tau_{d^\lambda, R}(\lambda y,\xi/\lambda)\,d\lambda\lesssim R_j^{2\delta}. $$
It follows that
$$\int_{\lambda\approx R_j}||F^\lambda_T||^2_{L^\infty}\,d\lambda\lesssim R_j^{O(\delta)}\cdot |T|\cdot \int |\widehat{M_T\mu_1}(\xi)|^2\,d\xi.$$
Recall that $M_T=\eta_T\Psi_{j,\tau}$. Noticing that $\Psi_{j,\tau}$ is bounded on $L^2$ as it behaves like a $0$-th order pseudodifferential operator, we see that $M_T$ is also bounded on $L^2$. Now by applying Plancherel's theorem twice, we have 
$$\sum_{T\in\mathbb{T}_j} \int_{\lambda\approx R_j}||F^\lambda_T||^2_{L^\infty}\,d\lambda\lesssim R_j^{O(\delta)}\cdot R^{-1/2} \int_{|\xi|\approx R_j} |\hat\mu_1(\xi)|^2\,d\xi\lesssim R_j^{O(\delta)+3/2-\alpha} I_{\alpha-\delta}(\mu_1), $$
where 
$$I_{s}(\mu_1)=\iint |x-y|^{-s}\,d\mu_1(x)\,d\mu_1(y)=c_{s}\int|\hat\mu_1(\xi)|^2\,|\xi|^{-2+s}\,d\xi$$ 
denotes the energy integral which is well known to be finite for any $s\in(0, \alpha)$ (see e.g. \cite[Section 2.5, 3.5]{Mat15}).

Plug this estimate into \eqref{almost-there}, with $|T|\approx R^{-1/2}$:
$$\begin{aligned}\int_{\lambda\approx R_j}\int |F^\lambda(x)|^2\,d\mu_2(x)\,d\lambda\leq &C_\delta\, R_j^{O(\delta)+\frac{1-\alpha}{3}-1/6+3/2-\alpha}=C_\delta \,R_j^{O(\delta)+\frac{5-4\alpha}{3}}\end{aligned},$$
where the power is negative if $\alpha>5/4$ and $\delta>0$ is small enough. The proof is complete.

  \section{Radial projections on manifolds}\label{Sec-radial-proj}
  In Euclidean spaces, denote the radial projection centered at $y\in\R^d$ by
  $$\pi^y(x)=\frac{x-y}{|x-y|}: \R^d\backslash\{y\}\rightarrow S^{d-1}.$$
  
  The following estimate due to Orponen plays an important role in recent work on Falconer distance conjecture \cite{KS18}, \cite{GIOW19}, \cite{Shm18}. 
 \begin{theorem}[{\cite[(3.6)]{Orp19}}]\label{Orponen-radial-proj}
  	Given compactly supported Borel measures $\mu$, $\nu$ in $\R^d$ such that $\supp(\mu)\cap\supp(\nu)=\emptyset$, $I_{s}(\mu), I_{t}(\nu)<\infty$, with $<2(d-1)-s<t<d-1$. Then for any 
  	$$1<p\leq \min\{2-\frac{t}{d-1}, \frac{t}{2(d-1)-s}\}$$
  	we have
  	\begin{equation}\int ||\pi^y_*(\mu)||^p_{L^p(S^{d-1})}\,d\nu(y)\lesssim I_{t}(\nu)^{1/2}\cdot I_{s}(\mu)^{p/2} <\infty.\end{equation}	
  \end{theorem}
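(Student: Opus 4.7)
My plan is to combine duality, a dyadic decomposition in polar coordinates around the variable center $y$, and the tube-integrated consequences of the finite energy hypotheses.

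First, I would approximate $\mu$ and $\nu$ by smooth compactly supported densities so all quantities are finite, and then apply duality: for each $y\notin\supp\mu$,
\[
\|\pi^y_*\mu\|_{L^p(S^1)} = \sup_{\|g\|_{L^{p'}(S^1)}\le 1}\int g(\pi^y(x))\,d\mu(x).
\]
Linearizing the supremum by a measurable selection reduces the theorem to the bilinear inequality
\[
\int\Bigl(\int g_y(\pi^y(x))\,d\mu(x)\Bigr)^p d\nu(y) \lesssim I_t(\nu)^{1/2}\, I_s(\mu)^{p/2}
\]
uniformly over measurable families $\{g_y\}$ with $\|g_y\|_{L^{p'}(S^1)}\le 1$.

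Second, I would decompose $\mu$ dyadically around the variable center: write $\mu=\sum_j \mu_j^y$ with $\mu_j^y$ supported in $\{|x-y|\sim 2^{-j}\}$, and then slice each annulus into angular sectors $S^y_{\epsilon,j}$ of aperture $\epsilon$, on which $\pi^y_*\mu_j^y$ has density essentially $\epsilon^{-1}\mu(S^y_{\epsilon,j})$. The key geometric input is the tube-integrated estimate that follows from $s>1$ and the Fourier characterization $I_s(\mu)=c_s\int|\widehat{\mu}(\xi)|^2|\xi|^{s-2}\,d\xi$: by slicing in the direction orthogonal to a line,
\[
\int_{S^1}\int_{\R}\mu(T_r^\theta(b))^2\,db\,d\theta \lesssim r^{2(s-1)}\,I_s(\mu),
\]
where $T_r^\theta(b)$ denotes the $r$-tube in direction $\theta$ with perpendicular offset $b$. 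This gives the averaged control on masses of the thin angular sectors.

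Third, inserting the tube bound into the bilinear integral and applying Cauchy-Schwarz against $\nu$ produces the factor $I_t(\nu)^{1/2}$; the role of $I_t(\nu)<\infty$ is to prevent $\nu$ from concentrating on the ``bad'' set of $y$ where the pushforward density is abnormally large, while the disjoint-support hypothesis ensures we never approach the cone-vertex singularity at $y$. The two endpoint constraints $p=2-t$ and $p=t/(2-s)$ arise, respectively, from pairing a crude $L^\infty$ bound on $\pi^y_*\mu$ with the $t$-energy and from exploiting the sharp averaged tube estimate, with Marcinkiewicz interpolation filling the open interior range. I expect the main obstacle to be the second step: one needs an angular-sector mass bound that is simultaneously sharp in $\epsilon$, stable under $\nu$-averaging, and does not deteriorate as $2^{-j}$ approaches $\mathrm{dist}(y,\supp\mu)$, which is precisely the regime that requires the full strength of both the $s$-energy of $\mu$ and the disjoint-support hypothesis, and where the tightness of the pair of upper bounds on $p$ reveals itself.
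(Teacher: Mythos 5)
The paper cites this statement from Orponen and proves the manifold analog (Theorem~\ref{radial-proj-thm}) in Section~\ref{Sec-radial-proj}, explicitly following Orponen's argument. That argument is quite different from yours. Its central step is the fibering identity
\[
\int \|\pi^y_*\mu\|^p_{L^p(S^{1})}\,d\nu(y)\ \approx\ \iint \bigl|(\pi_\theta)_*\mu(u)\bigr|^p\,(\pi_\theta)_*\nu(u)\,du\,d\mathcal{H}^1(\theta),
\]
which converts the nonlinear family of radial projections centered at moving points $y$ into a single weighted $L^p$ average of \emph{linear} orthogonal projections. After this one fixes $\theta$, dualizes in $L^p((\pi_\theta)_*\nu)$, applies the Sobolev--energy lemma $\|g\|_{L^1(\lambda)}\lesssim I_{2-s+\epsilon}(\lambda)^{1/2}\|g\|_{H^{(s-\epsilon-1)/2}}$, uses H\"older twice to trade $I_{2-s+\epsilon}(f\,d(\pi_\theta)_*\nu)$ for $I_t((\pi_\theta)_*\nu)^{1/p}$ (this is where $p<t/(2-s)$ enters), and integrates in $\theta$ using the classical $L^2$ energy estimate for orthogonal projections. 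There is no dyadic decomposition around $y$, no angular-sector counting, and no Marcinkiewicz interpolation.

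Beyond the difference in route, your outline has a gap at precisely the point you call ``the main obstacle,'' and you do not resolve it. Your tube estimate controls $\int_{S^1}\int_{\R}\mu(T_r^\theta(b))^2\,db\,d\theta$, an average over tube offsets $b$ with respect to \emph{Lebesgue} measure. But the sector $S^y_{\epsilon,j}$ is essentially the tube $T^{\theta}_{\epsilon 2^{-j}}(\pi_\theta(y))$, so after integrating $d\nu(y)$ the offset $b=\pi_\theta(y)$ is distributed according to $(\pi_\theta)_*\nu(db)$, not $db$. Passing from a Lebesgue-averaged tube bound to a $(\pi_\theta)_*\nu$-weighted one is not a Cauchy--Schwarz step: for $t<1$ the measure $(\pi_\theta)_*\nu$ has no bounded density, so the two averages are not comparable in any direct way. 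This transfer is exactly what Orponen's Sobolev--energy lemma accomplishes, and your sketch supplies no substitute for it. Two smaller issues: the exponent $r^{2(s-1)}$ in your tube bound is not what Plancherel gives (the natural normalization yields $r^{\min(2,\,1+s)}I_s(\mu)$, which for $s$ near $1$ is about $r^2$, not $r^0$), and the ``crude $L^\infty$ bound on $\pi^y_*\mu$'' you plan to pair with the $t$-energy at the $p=2-t$ endpoint does not exist under these hypotheses --- a Frostman measure with $s<2$ can have unbounded radial projections --- so the proposed interpolation scheme lacks a usable endpoint.
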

  
  In this paper we prove an analog of this result on $2$-dimensional Riemannian manifolds, where the radial projection is defined by
  $$\pi^y(x) = \frac{\exp^{-1}_y x}{|\exp^{-1}_y x|}\in S^{1}.$$

  \begin{theorem}\label{radial-proj-thm}
  	Given compactly supported Borel measures $\mu$, $\nu$ on a $2$-dimensional Riemannian manifold such that $\supp(\mu)\cap\supp(\nu)=\emptyset$, $I_{s}(\mu), I_{t}(\nu)<\infty$, with $2-s<t<1$. Then for any 
  	$$1<p\leq \min\{2-t, \frac{t}{2-s}\}$$
  	we have
  	\begin{equation}\int ||\pi^y_*(\mu)||^p_{L^p(S^{1})}\,d\nu(y)\lesssim I_{t}(\nu)^{1/2}\cdot I_{s}(\mu)^{p/2} <\infty.\end{equation}	
  \end{theorem}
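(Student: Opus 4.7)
The strategy is to reduce to the Euclidean Orponen estimate (Theorem 7.1) by working in a single local coordinate chart and carefully comparing the Riemannian and Euclidean radial projections. Since $\supp\mu$ and $\supp\nu$ are compact and disjoint, we may fix a chart $(U,\psi)$ containing both, with $\psi(U)\subset \R^2$; after shrinking $U$ the pulled-back metric is smooth and uniformly comparable to the Euclidean one, so $d_g(x,y)\sim|x-y|$ in coordinates. Setting $\tilde\mu:=\psi_*\mu$ and $\tilde\nu:=\psi_*\nu$, the Riemannian energies $I_s(\mu), I_t(\nu)$ are comparable to the Euclidean energies $I_s(\tilde\mu), I_t(\tilde\nu)$, so the hypotheses of Theorem 7.1 hold for $(\tilde\mu,\tilde\nu)$.

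The heart of the argument is a comparison between $\pi^y$ and the Euclidean projection $\tilde\pi^y(x)=(x-y)/|x-y|$. In local coordinates, write $\Phi_y(x):=\exp_y^{-1}(x)$, so that $\pi^y(x) = \Phi_y(x)/|\Phi_y(x)|$ after identifying $T_yM\cong\R^2$ via the chart. The family $\{\Phi_y\}_{y\in\supp\nu}$ is jointly smooth in $(y,x)$; we have $\Phi_y(y)=0$, and $D_x\Phi_y(y)$ is a linear isomorphism depending smoothly on $y$. Consequently $\Phi_y(x)-A_y(x-y)=O(|x-y|^2)$ for a smoothly varying linear iso $A_y$ with $A_y^{\pm 1}$ uniformly bounded; in particular the Jacobian of $\Phi_y$ is uniformly comparable to $1$, and the geodesic rays from $y$ (the fibers of $\pi^y$) are $C^1$-close to Euclidean rays within the patch.

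From here one can complete the proof in (at least) two ways. The conceptually cleanest is to adapt Orponen's proof of Theorem 7.1 directly: his argument uses only that the radial projection is a smooth submersion with one-dimensional fibers satisfying a uniform transversality condition as $y$ varies, and both properties hold for the Riemannian projection $\pi^y$ in the chart with constants uniform in $y\in\supp\nu$. Alternatively, for each $y$ define $\mu_y:=(\Phi_y)_*\mu$ on $T_yM\cong\R^2$, observe that $\pi^y_*\mu=(\tilde\pi^0)_*\mu_y$ with $\tilde\pi^0(z)=z/|z|$, and use the uniform bi-Lipschitz control on $(y,x)\mapsto\Phi_y(x)$ to transfer the bound from Theorem 7.1 applied to $(\tilde\mu,\tilde\nu)$ to the Riemannian integral. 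Either route yields
\[
\int\|\pi^y_*\mu\|_{L^p(S^1)}^p\,d\nu(y)\lesssim I_t(\nu)^{p/2}\cdot I_s(\mu)^{p/2}<\infty.
\]

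The main obstacle is that the family $\{\pi^y\}$ is not obtained from $\{\tilde\pi^y\}$ by a single smooth reparametrization of $S^1$: for fixed $y$, the maps $\pi^y(x)$ and $\tilde\pi^y(x)$ differ by $O(|x-y|)$, a deviation that is only small when the chart diameter is small. In the first approach, this means one must re-verify Orponen's geometric incidence estimates for geodesic cones emanating from $\supp\nu$; in the second, a $y$-dependent change of variables must be carefully interchanged with the outer integral in $\nu$. Both are handled because the relevant distortion quantities (Christoffel symbols and their derivatives) are smooth and uniformly bounded on the compact patch, so all comparison constants are independent of $y$.
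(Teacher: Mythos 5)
Your approach (a) is in the right spirit --- the paper does exactly this, adapting Orponen's proof rather than trying to transfer the Euclidean result verbatim --- but your characterization of what Orponen's argument needs is off. Orponen does not estimate the radial projections $\pi^y$ directly; he first disintegrates $\int\|\pi^y_*\mu\|_{L^p}^p\,d\nu(y)$ into an integral over a family of \emph{orthogonal} projections, $\iint |(\pi_\theta)_*\mu(u)|^p\,(\pi_\theta)_*\nu(u)\,du\,d\mathcal{H}^1(\theta)$, and it is the one-parameter family $\{\pi_\theta\}_{\theta\in S^1}$ that must satisfy the Peres--Schlag transversality condition. The paper therefore defines a geodesic analogue of $\pi_\theta$ (by flowing along geodesics with direction $\theta$ back to the $x_1$-axis), verifies $|\partial_\theta\nabla_x\pi_\theta|\approx 1$ from the Carleson--Sj\"olin curvature hypothesis, imports the Peres--Schlag $L^2$ and energy estimates for this family, and only then runs Orponen's Sobolev-duality computation. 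Your proposal never identifies this family nor checks the transversality, which is the actual mathematical content; ``re-verify Orponen's geometric incidence estimates for geodesic cones'' is too vague to substitute for it.

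Approach (b) has a concrete gap. Writing $\pi^y_*\mu=(\tilde\pi^0)_*\mu_y$ with $\mu_y=(\Phi_y)_*\mu$ is correct, but then the integral you need to bound is $\int\|(\tilde\pi^0)_*\mu_y\|_{L^p}^p\,d\nu(y)$, where the measure being projected changes with $y$. Theorem~\ref{Orponen-radial-proj} applied to the fixed pair $(\tilde\mu,\tilde\nu)$ controls $\int\|(\tilde\pi^y)_*\tilde\mu\|_{L^p}^p\,d\tilde\nu(y)$, which is a different quantity: the fibers of $\pi^y$ are geodesics, and straightening them with $\Phi_y$ produces a genuinely $y$-dependent measure, not a fixed bi-Lipschitz copy of $\tilde\mu$. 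The deviation being $O(|x-y|)$ with $|x-y|\approx 1$ on $\supp\mu\times\supp\nu$ means this is not a small perturbation, and uniform bounds on Christoffel symbols do not by themselves let you interchange the $y$-dependent change of variables with the outer $\nu$-integral. The generalized-projection framework is precisely what circumvents this obstruction, and it is the piece your proposal is missing.
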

    We believe similar results still hold in higher dimensions. However, in the absence of a good coordinate system in higher dimensions, extra efforts are needed to extend the proof, which would certainly make the argument long and tedious. As we only use radial projections on $2$-dimensional manifolds, and this radial projection theorem is not the main contribution of this paper, we choose to only state and prove the $2$-dimensional version. In fact, we suspect that in higher dimensions we may need to introduce extra concepts and terminologies from differential geometry that are unfamiliar to many readers of this paper. We plan to address this problem in a sequel.

  The proof of Theorem \ref{radial-proj-thm} is almost the same as Orponen's. The only difference is, in Euclidean spaces it is reduced to classical estimates on orthogonal projections, while on manifolds we reduce it to estimates of generalized projections due to Peres and Schlag \cite{PS00}. 
   
  \subsection{Orthogonal projections on manifolds}
  In \cite{PS00}, a very broad class of maps, called generalized projections, is studied. We shall show that local orthogonal projections on manifolds lie in this class.

  We choose a geodesic $\gamma_0$ and work in the Fermi  normal coordinates about $\gamma_0$, so that $\gamma_0=\{(x_1,0)\}$ in this coordinate system.  Then for any $u\in\R$ and $(\cos\theta, \sin\theta)\in S^{1}$, denote by $\pi_\theta(x)=u$ the ``orthogonal projection on manifolds" if there exists $t\in\R$ such that $\exp_{(u,0)}t(\cos\theta, \sin\theta)=x$. This projection map is always well-defined as long as we work within a small enough local chart. We will also assume that $x_2$ is bounded away from $0$. Note that in this case we must have $$(\cos\theta,\sin\theta)=-\dfrac{\nabla_y{d_g(x,(\pi_\theta(x),0))}}{|\nabla_y{d_g(x,(\pi_\theta(x),0))}|}.$$
  Since we are working in the Fermi coordinates about $\gamma_0=\{(x_1,0)\}$, we have that $g_{ij}(u,0)=\delta_{ij}$, where $(g_{ij})$ denotes the metric matrix, and thus $\nabla_y{d_g(x,(u,0))}$ must be a unit vector. Therefore
\begin{equation}\label{diff_unit_vector}(\vec{\omega}\cdot\nabla_x)\nabla_yd_g(x, (u,0))\perp \nabla_y d_g(x, (u,0)),\ \forall\,\vec\omega\in S^1\end{equation}
and by our definition of $\pi_\theta$ we have
 \begin{equation}\label{unit_vector}(\cos\theta, \sin\theta)=-\nabla_y{d_g(x,(\pi_\theta(x),0))}.\end{equation}
The formula \eqref{unit_vector} significantly simplifies our computations. Unfortunately, a higher dimensional analog is not generally available.

Since $\vec\omega$ is arbitrary in \eqref{diff_unit_vector}, we see that $(\cos\theta, \sin\theta)$ must be contained in the kernel of the rank 1 matrix $A_\theta:=\nabla^2_{xy}d_g(x, (\pi_\theta(x),0))$, that is
  \begin{equation}\label{rank1}A_\theta\cdot(\cos\theta,\sin\theta)^T=(0,0)^T.\end{equation}
  If we fix $x$, take $\partial_\theta$ on both sides of \eqref{rank1}, we have
  \begin{equation}\label{rank1 diff}\partial_\theta A_\theta\cdot(\cos\theta,\sin\theta)^T=-A_\theta\cdot(-\sin\theta,\cos\theta)^T,\end{equation}
  Since $A_\theta$ has rank 1 and $(\cos\theta,\sin\theta)^T$ is in its kernel, both sides of \eqref{rank1 diff} have norm $\gtrsim 1$.
  
 To apply estimates in \cite{PS00}, we need to check the regularity condition and the transversality condition. From the local smoothness of the exponential map one concludes that all derivatives of $\pi_\theta(x)$ are bounded, so the regularity condition is satisfied. It remains to check the transversality condition on $\pi_\theta$, namely  \begin{equation}\label{transverality_condition}|\vec{\omega}\cdot\nabla_x\pi_\theta|\ll 1\implies|\partial_\theta (\vec{\omega}\cdot\nabla_x\pi_\theta)|\gtrsim 1,\ \forall\,\vec{\omega}\in S^{1}.\end{equation}
Notice these conditions are obvious under the Euclidean metric where $\pi_\theta(x)=x_1+x_2\cot \theta$. 

By differentiating both sides of \eqref{unit_vector} in $\theta$, one can see $$|\partial_{y_1}\nabla_yd_g|,|\partial_\theta\pi_\theta|\gtrsim 1.$$ Also we can take $\nabla_x$ on both sides of \eqref{unit_vector} to have
  \begin{equation}\label{prepare_for_partial_theta}A_\theta + \partial_{y_1}\nabla_yd_g(x,(\pi_\theta(x), 0))^T\cdot\nabla_x\pi_\theta(x)=0.\end{equation}
As $A_\theta$ is not a $0$ matrix, one concludes $|\nabla_x\pi_\theta|\gtrsim 1$. If we multiply both sides of \eqref{prepare_for_partial_theta} by $(\cos\theta,\sin\theta)^T$ from the right, then by \eqref{rank1} and $|\partial_{y_1}\nabla_yd_g|\gtrsim 1$, it follows that $\nabla_x\pi_\theta\cdot(\cos\theta,\sin\theta)=0$. Therefore 
 \begin{equation}\label{theta_omega}|\vec{\omega}\cdot\nabla_x\pi_\theta|\ll 1\implies |\vec\omega-(\cos\theta,\sin\theta)|\ll1.\end{equation}Now we fix $x$ and take $\partial_\theta$ on both sides of \eqref{prepare_for_partial_theta} to get
 \begin{equation}\label{partial_theta}\partial_\theta A_\theta + (\partial_\theta\pi_\theta)(\partial^2_{y_1}\nabla_yd_g)^T\cdot(\nabla_x\pi_\theta)+(\partial_{y_1}\nabla_yd_g)^T\cdot(\partial_\theta\nabla_x\pi_\theta)=0.\end{equation}
 If we multiply it by $\vec\omega^T$ from the right, then we have
  \begin{equation}\label{partial_theta_dot}\partial_\theta A_\theta\cdot\vec{\omega}^T  + (\partial_\theta\pi_\theta)(\vec{\omega}\cdot\nabla_x\pi_\theta)(\partial^2_{y_1}\nabla_yd_g)^T+(\partial_\theta(\vec{\omega}\cdot\nabla_x\pi_\theta))(\partial_{y_1}\nabla_yd_g)^T=0.\end{equation}
By \eqref{theta_omega}, if $|\vec{\omega}\cdot\nabla_x\pi_\theta|\ll 1$ then $\vec{\omega}$ is close to $(\cos\theta, \sin\theta)$. Therefore the first term has norm about 1 by \eqref{rank1 diff}. Hence
 \eqref{transverality_condition} holds and the transversality condition is checked.

  With the regularity condition and the transversality condition, it is known that (see \cite{PS00}, or \cite[Chapter 18]{Mat15})
  \begin{itemize}
  	\item if $I_\alpha(\mu)<\infty$, then
  	\begin{equation}
  	\label{l2-generalized-proj} 
  	\int_{S^{1}} |\widehat{(\pi_\theta)_*\mu}(\xi)|^2\,d\theta\leq C(\alpha) \,I_\alpha(\mu)\,(1+|\xi|)^{-\alpha};
  	\end{equation}
  	\item for any measure $\sigma$ on $S^{1}$ satisfying $$\sigma(B(\theta, r))\lesssim r^s,\ \forall\,\theta\in S^{1}, r>0,$$ we have
  	$$\int I_{\beta}((\pi_\theta)_*\mu)\,d\sigma(\theta)\lesssim I_\beta(\mu),\  \forall\,0<\beta<s.$$
  \end{itemize}

  In particular, since by H\"older
  $$\int_{B(\theta_0,r)} f(\theta)\,d\mathcal{H}^{1}(\theta)\lesssim r^{1/p}||f||_{p'}, $$
  it follows that for any $\tau\in(0,1)$,
  \begin{equation}\label{Lp-energy}\left(\int\left|I_{\tau}((\pi_\theta)_*\mu)\right|^p d\mathcal{H}^{1}(\theta)\right)^{1/p}\lesssim I_\tau(\mu),\ \forall\, p\in [1, 1/\tau).\end{equation}
  
  \subsection{Proof of Theorem \ref{radial-proj-thm}}
  We follow Orponen's argument in \cite[Section 3]{Orp19}. We may assume $\mu, \nu\in C_0^\infty$, then the general case follows by a standard limit argument (see \cite{Orp19} for details). 
  
  The first step is to reduce radial projections to orthogonal projections: for any $p>0$,
  \begin{equation}\label{reduce-to-orthogonal-proj}\int ||\pi^y_*(\mu)||^p_{L^p(S^{1})}\,d\nu(y) \approx \iint |(\pi_\theta)_*\mu(u)|^p (\pi_\theta)_*\nu(u)\,du\,\mathcal{H}^{1}(\theta).\end{equation}
  where the implicit constant only depends on $dist(\supp(\mu), \supp(\nu))$. 
  
  To see this, for any $f\in C(S^{1})$,
  $$\int f(e)\,d\pi^y_*\mu(e) = \int f\Big(\frac{\exp_y^{-1}x}{|\exp_y^{-1}x|}\Big)\,\mu(y)\,dy.$$
  
  Since $\text{dist}(x,y)\approx 1$, by polar coordinates in $T_y M$ it approximately equals
  $$\int_{S^{1}} f(e) \int \mu(\exp_y t e)\,dt\,d\mathcal{H}^{1}(e).$$
  Therefore as a function
  $$\pi^y_*\mu(e)\approx \int \mu(\exp_y(te))\,dt=\int \mu(\exp_{(\pi_{\theta(y,e)},0)}t\theta(y,e))\,dt,$$
  where $\theta({}\cdot{}, {}\cdot{})$ is as defined in \eqref{def-theta}.
  
  Since the exponential map is a local diffeomorphism on the tangent bundle, the map that sends $y,e$ to $u=\pi_{\theta(y,e)}, \theta=\theta(y,e)$ is differentiable, with Jacobian $\approx 1$. Therefore
  $$\begin{aligned}&\int ||\pi^y_*(\mu)||^p_{L^p(S^{1})}\,d\nu(y)\\\approx & \iint\left|\int \mu(\exp_{(u,0)}t\theta)\,dt\right|^p \left(\int \nu(\exp_{(u,0)} t\theta)\,dt\right)du\, d\mathcal{H}^{1}(\theta)\\\approx&\iint\left|(\pi_\theta)_*\mu (u) \right|^p \,(\pi_\theta)_*\mu (u)\, du\,d\mathcal{H}^{1}(\theta).\end{aligned}$$
  
  Now it suffices to consider the right-hand side of \eqref{reduce-to-orthogonal-proj}. It is known that (see \cite[Lemma 3.4]{Orp19}) if $2-s+\epsilon\in(0, 1)$, then
  $$||g||_{L^1(\lambda)}\lesssim \sqrt{I_{2-s+\epsilon}(\lambda)}\,||g||_{H^{(s-\epsilon-1)/2}}$$
  for any compactly supported measure $\lambda$ on $\R$ and any continuous function $g\in H^\sigma$, where
  \begin{equation}\label{lemma-3.4-in-Orp19}||g||_{H^{(s-\epsilon-1)/2}}:=\left(\int |\hat{g}(\xi)|^2\,|\xi|^{s-\epsilon-1}\,d\xi\right)^{1/2}.\end{equation}
  
  Fix $f\in L^{p'}((\pi_\theta)_*\mu)$, $||f||_{L^{p'}((\pi_\theta)_*\mu)}=1$. Since there exists $\epsilon>0$ such that $2-s+\epsilon\in(0,1)$ and $1<p\leq t/(2-s+\epsilon)$, by H\"older's inequality twice we have
  $$I_{2-s+\epsilon}(f\,d(\pi_\theta)_*\nu)=\iint \frac{f(x)\,f(y)\,d(\pi_\theta)_*\nu(x)\,d(\pi_\theta)_*\nu(y)}{|x-y|^{2-s+\epsilon}}\lesssim I_{t}((\pi_\theta)_*\nu)^{1/p}.$$
  
  Then with $g=(\pi_\theta)_*\mu$, $\lambda=f\,d(\pi_\theta)_*\nu$ in \eqref{lemma-3.4-in-Orp19},
  $$\begin{aligned}\int (\pi_\theta)_*\mu\cdot f\,d(\pi_\theta)_*\nu\lesssim & \sqrt{I_{2-s+\epsilon}(f\,d(\pi_\theta)_*\nu)}\ ||(\pi_\theta)_*\mu||_{H^{(s-\epsilon-1)/2}} \\ \lesssim & I_{t}((\pi_\theta)_*\nu)^{1/2p}\cdot \left(\int |\widehat{(\pi_\theta)_*\mu)}(\xi)|^2\,|\xi|^{s-\epsilon-1}\,d\xi\right)^{1/2}.\end{aligned}$$
  
  It follows that
  $$\left(\int\left|(\pi_\theta)_*\mu (u) \right|^p \,(\pi_\theta)_*\mu (u)\, du\right)^{1/p}\lesssim I_{t}((\pi_\theta)_*\nu)^{1/2p}\cdot \left(\int |\widehat{(\pi_\theta)_*\mu)}(\xi)|^2\,|\xi|^{s-\epsilon-1}\,d\xi\right)^{1/2}. $$
  
  Now it remains to show, for any $h\in L^{p'}(S^{1})$, $||h||_{L^{p'}(S^{1})}=1$, 
  $$\begin{aligned}&\int_{S^{1}}I_{t}((\pi_\theta)_*\nu)^{1/2p}\cdot \left(\int |\widehat{(\pi_\theta)_*\mu)}(\xi)|^2\,|\xi|^{s-\epsilon-1}\,d\xi\right)^{1/2}\cdot h(\theta)\,d\mathcal{H}^{1}(\theta)\\\lesssim & I_{t}(\nu)^{1/2p}\cdot I_{s}(\mu)^{1/2}.\end{aligned} $$
  By Cauchy-Schwarz the left-hand side is bounded from above by
  $$\left(\int_{S^{1}}I_{t}((\pi_\theta)_*\nu)^{1/p}\cdot h(\theta)^2\,d\mathcal{H}^{1}(\theta)\right)^{1/2}\left(\iint |\widehat{(\pi_\theta)_*\mu)}(\xi)|^2\,|\xi|^{s-\epsilon-1}\,d\xi\,d\mathcal{H}^{1}(\theta)\right)^{1/2}.$$
  Then the second factor is $\lesssim I_s(\mu)^{1/2}$ by \eqref{l2-generalized-proj}, and the first factor is 
  $$\lesssim \left(\int_{S^{1}}I_{t}((\pi_\theta)_*\nu)\cdot h(\theta)^p\,d\mathcal{H}^{1}(\theta)\right)^{1/2p}\cdot ||h||^{1/2}_{L^{p'}}\lesssim I_t(\nu)^{1/2p}$$
  by H\"older's inequality and \eqref{Lp-energy}. The proof is complete.

\end{document}